\documentclass{amsart}

\usepackage{xparse}
\usepackage{hyperref}
\usepackage{amsmath}
\usepackage{amssymb}
\usepackage{amsthm}
\usepackage{mathrsfs}
\usepackage{tikz-cd}
\usepackage{enumerate}

\DeclareMathOperator{\Spec}{Spec}
\DeclareMathOperator{\Spa}{Spa}
\DeclareMathOperator{\Spd}{Spd}
\DeclareMathOperator{\Hom}{Hom}
\DeclareMathOperator{\id}{id}
\DeclareMathOperator{\supp}{supp}

\def\textdef{\textit}
\def\lt{<}
\def\gt{>}

\theoremstyle{plain}
\newtheorem{theorem}{Theorem}[section]
\newtheorem{proposition}[theorem]{Proposition}
\newtheorem{lemma}[theorem]{Lemma}
\newtheorem{corollary}[theorem]{Corollary}
\theoremstyle{definition}
\newtheorem{definition}[theorem]{Definition}
\newtheorem{remark}[theorem]{Remark}
\newtheorem{example}[theorem]{Example}

\def\rrrarrows{\raisebox{0.167em}{$\rightrightarrows$}\hspace{-1em}\raisebox{-0.167em}{$\rightrightarrows$}}
\def\vect{\mathsf{Vect}}
\def\novisoc{\mathsf{NovIsoc}}
\def\ldparen{(\!(}
\def\rdparen{)\!)}
\def\dparenmult{\ldparen t_1^\novexp, \dotsc, t_n^\novexp \rdparen}
\newcommand{\novr}[1]{{\def\novexp{\mathbb{R}} #1 }}

\NewDocumentCommand{\novdd}{r()O{\novexp}}{
  \mathsf{NovDD}(#1, #2)
}

\NewDocumentCommand{\dparen}{r[]oo}{
  \IfValueTF{#3}{
    \ldparen #1^{\novexp}, #2^{\novexp}, #3^{\novexp} \rdparen
  }{
    \IfValueTF{#2}{
      \ldparen #1^{\novexp}, #2^{\novexp} \rdparen
    }{
      \ldparen #1^{\novexp} \rdparen
    }
  }
}
\NewDocumentCommand{\dbrack}{r[]oo}{
  \IfValueTF{#3}{
    [[ #1^{\novexp}, #2^{\novexp}, #3^{\novexp} ]]
  }{
    \IfValueTF{#2}{
      [[ #1^{\novexp}, #2^{\novexp} ]]
    }{
      [[ #1^{\novexp} ]]
    }
  }
}

\title{Descending finite projective modules from a Novikov ring}
\author{Dongryul Kim}
\date{\today}
\email{dkim04@stanford.edu}
\address{Department of Mathematics, Stanford University, 450 Jane Stanford Way
(Building 380), Stanford, California, USA}

\begin{document}

\maketitle

\begin{abstract}
  We prove a descent result for finite projective modules, motivated by a
  question in perfectoid geometry. Given a commutative ring $A$, we formulate a
  descent problem for descending a finite projective module over the Novikov
  ring with coefficients in $A$ to a finite projective module over $A$. The main
  theorem of this paper is that all such descent data are effective. As an
  application, we prove for every perfect $\mathbb{F}_p$-algebra $A$, a vector
  bundle on $\operatorname{Spd} A$ always descends to a vector bundle on
  $\operatorname{Spec} A$.
\end{abstract}

\tableofcontents

\section{Introduction}
\def\novexp{\Gamma}

The goal of this paper is to prove the following descent result. We refer the
reader to Section~\ref{Sec:Novikov} for the definition of the rings and maps
involved.

\begin{theorem}[Theorem~\ref{Thm:NovDescent}] \label{Thm:Main}
  Let $A$ be a commutative ring and let $\novexp \subseteq \mathbb{R}$ be an
  additive submonoid. Then the category of descent data of finite projective
  modules along the semi-cosimplicial diagram
  \[
    A\dparen[t] \rightrightarrows A\dparen[t][u] \rrrarrows A\dparen[t][u][v]
  \]
  is equivalent to the category of finite projective $A$-modules.
\end{theorem}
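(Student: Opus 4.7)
The natural plan is to set up a base-change functor and a candidate inverse by invariants. Let $F : \mathrm{FinProj}(A) \to \mathrm{DescData}$ send $P$ to $P \otimes_A A\dparen[t]$ equipped with its tautological descent datum, and let
\[
G(M, \varphi) := \bigl\{ m \in M : \varphi(1 \otimes_{\sigma_u} m) = m \otimes_{\sigma_t} 1 \bigr\},
\]
where $\sigma_t, \sigma_u : A\dparen[t] \rightrightarrows A\dparen[t][u]$ denote the two face maps. The identity $G \circ F \simeq \mathrm{id}$ reduces, by viewing finite projective $P$ as a retract of a free module, to the equalizer statement $A = \mathrm{eq}(A\dparen[t] \rightrightarrows A\dparen[t][u])$, which is an immediate direct verification on Novikov series.

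The heart of the argument is essential surjectivity: given $(M, \varphi)$, one must show that $G(M, \varphi)$ is finite projective over $A$ and that $G(M, \varphi) \otimes_A A\dparen[t] \to M$ is an isomorphism. My plan is to localize on $\Spec(A)$, since finite projectivity is a local property and both $F$ and $G$ commute with localization on $A$. Having reduced to $A$ local, I would argue that the Novikov ring $A\dparen[t]$ is itself local (with maximal ideal read off from the leading-exponent valuation together with the maximal ideal of $A$), whence the finite projective $M$ is free of some rank $n$. The descent datum is then given by a matrix $g(t, u) \in \mathrm{GL}_n(A\dparen[t][u])$ satisfying $g(t, u)\, g(u, v) = g(t, v)$ (and therefore $g(t, t) = I$), and producing a descended $A$-module amounts to finding $h \in \mathrm{GL}_n(A\dparen[t])$ with $g(t, u) = h(t)^{-1} h(u)$.

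This trivialization is the genuine obstacle. For the analogous problem over $A[[t]]$ one could take $h(u) := g(0, u)$ and appeal to the cocycle identity, but the Novikov ring admits no augmentation to $A$ in general, so there is no obvious specialization of $t$. My plan is instead to construct $h$ recursively along the leading-exponent filtration of $A\dparen[t][u]$: solve the linear equations imposed by $g(t, u) = h(t)^{-1} h(u)$ order by order, relying on the cocycle identity to guarantee the consistency of each step of the recursion. The delicate point I expect will be controlling the support of $h$ and its inverse so that both land in $A\dparen[t]$, since the admissibility conditions for multivariate Novikov series are subtler than in one variable. Once such an $h$ is produced, the change of basis it provides exhibits $M$ as $P \otimes_A A\dparen[t]$ for $P := A^n$, so both the finite projectivity of $G(M, \varphi) \cong P$ and the required isomorphism follow.
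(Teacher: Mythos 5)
The overall framing (a base-change functor $F$ and an invariants functor $G$, with $G\circ F\simeq\mathrm{id}$ coming from the equalizer $A=\operatorname{eq}(A\dparen[t]\rightrightarrows A\dparen[t][u])$) is fine, and the easy direction is indeed handled in the paper by a support argument of just this kind. But the plan for essential surjectivity has two gaps that I think are fatal rather than merely technical.

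\textbf{The localization step does not work.} The claim that ``the Novikov ring $A\dparen[t]$ is itself local'' once $A$ is local is false unless $A$ is a field (or, say, Artinian). Take $A=\mathbb{Z}_p$ and $\Gamma=\mathbb{R}$. The element $p+t$ is not a unit in $\mathbb{Z}_p\dparen[t]$: any inverse $\sum b_i t^{d_i}$ would have to satisfy $b_{d-1}=-p\,b_d$ for all $d\neq 0$, forcing infinitely many strictly decreasing exponents, which violates the Novikov support condition. But $p+t$ is also not in the Jacobson radical, since $1 - t^{-1}(p+t) = -p\,t^{-1}$ is not a unit. Hence $\mathbb{Z}_p\dparen[t]$ has at least two maximal ideals and is not local, so you cannot conclude that a finite projective module over $A\dparen[t]$ is free after localizing $A$ at a prime. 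Moreover, $A\mapsto A\dparen[t]$ does not commute with localization in $A$ (there is no common denominator for an infinite Novikov sum), so even if you could handle the local case you would face a real gluing problem; ``both $F$ and $G$ commute with localization on $A$'' is not true for $G$, which is defined by an equalizer.

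\textbf{The order-by-order recursion has no base step.} You correctly observe that there is no augmentation $A\dparen[t]\to A$ (no ``set $t=0$''). But the proposed substitute --- solving $g(t,u)=h(t)^{-1}h(u)$ ``recursively along the leading-exponent filtration'' --- is not a well-defined induction when $\Gamma$ is dense in $\mathbb{R}$ (e.g.\ $\Gamma=\mathbb{R}$, which the theorem requires): there is no smallest positive exponent, so there is no first step to the recursion and no way to run a filtered induction. Some \emph{contraction} is needed to make the successive corrections converge. This is precisely what the paper introduces: attached to a descent datum is a ``Novikov isocrystal'' $(M,F_M)$, where $F_M$ is semilinear over the dilation $t\mapsto t^\lambda$ ($\lambda>1$). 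This Frobenius-like structure rescales supports, and all the convergence arguments in the paper (e.g.\ Proposition~\ref{Prop:NovIsocGeomPt}, Lemma~\ref{Lem:FrobeniusLimit}, Proposition~\ref{Prop:LatticeIsocrystal}) are geometric-series arguments powered by this contraction. That ingredient is entirely absent from your outline.

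For comparison, the paper's route is: show $\novdd(A)\to\novisoc(A)$ is fully faithful (so effectivity can be checked on the isocrystal); handle algebraically closed fields by a Dieudonn\'e--Manin-style argument; pass to arbitrary reduced rings by embedding into a product of algebraically closed fields and using an injectivity-descent lemma for isocrystals (Proposition~\ref{Prop:NovIsocInjective}); handle a general $A$ by a square-zero deformation argument combined with a lattice criterion (Proposition~\ref{Prop:LatticeIsocrystal}) that reduces the infinite nilradical to a finitely generated one; and finally drop $\Gamma=\mathbb{R}$ by a support argument. None of these steps uses $A\dparen[t]$ being local, and none localizes on $\Spec A$.
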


\begin{remark}
  In general, the ring $A\dparen[t][u]$ is much larger than the tensor product
  $A\dparen[t] \otimes_A A\dparen[t]$, which is usually where the isomorphism of
  the descent datum is defined over. As a consequence, standard descent results
  for finite projective modules are inapplicable to this setting.
\end{remark}

\begin{remark} \label{Rem:AnalyticToDiscrete}
  \def\novexp{}
  This may heuristically be interpreted as an analytic-to-discrete descent
  result for adic spaces. For example, let $k$ be a field, consider the field
  $k\dparen[t]$ of Laurent series with the $t$-adic topology, and take its adic
  spectrum $X = \Spa(k\dparen[t], k\dbrack[t])$. Then we have
  \[
    Y = X \times_{\Spa(k, k)} X = \bigcup_{n \ge 1} \Spa \Bigl( k\Bigl[t, u,
    \frac{t^n}{u}, \frac{u^n}{t}\Bigr]^\wedge \Bigl[\frac{1}{tu}\Bigr],
    k\Bigl[t, u, \frac{t^n}{u}, \frac{u^n}{t}\Bigr]^\wedge \Bigr)
  \]
  in the category of adic spaces, where we use the $(tu)$-adic topology for the
  completion. The adic space $Y$ can also be regarded as the punctured open unit
  disc over $k\dparen[t]$, and we have $H^0(Y, \mathscr{O}_Y) = k\dparen[t][u]$.
  Then Theorem~\ref{Thm:Main} implies that the category of vector bundles on $X$
  equipped with descent data over $Y$ is equivalent to the category of vector
  bundles on $\Spa (k, k)$.
\end{remark}

The theorem is motivated by question in perfectoid geometry, namely that of
descending a vector bundle from $\Spd A$ to a vector bundle on $\Spec A$, where
$A$ is a discrete perfect ring in characteristic $p$. Using
Theorem~\ref{Thm:Main} with $\Gamma = \mathbb{Z}[p^{-1}]$, we deduce the
following result as an immediate consequence. We refer the reader to
Section~\ref{Sec:Perfectoid} for the definitions of the objects involved.

\begin{theorem}[Corollary~\ref{Cor:VectorBundleOnSpd}, \ref{Cor:FiniteEtaleOverSpd}]
  \label{Thm:MainPerfectoid}
  Let $A$ be a perfect $\mathbb{F}_p$-algebra. Then the natural functor
  \[
    \lbrace \text{vector bundles on } \Spec A \rbrace \to \lbrace \text{vector
    bundles on } \Spd A \rbrace
  \]
  is an exact tensor equivalence. Similarly, the natural functor
  \[
    \lbrace \text{schemes finite \'{e}tale over } \Spec A \rbrace \to \lbrace
    \text{v-sheaves finite \'{e}tale over } \Spd A \rbrace
  \]
  is an equivalence.
\end{theorem}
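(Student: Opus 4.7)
The plan is to deduce both equivalences from Theorem~\ref{Thm:NovDescent} with $\novexp = \mathbb{Z}[1/p]$ by realizing its semi-cosimplicial diagram of Novikov rings as the \v{C}ech nerve of a v-cover of $\Spd A$. With this choice of $\novexp$, the Novikov ring $A\dparen[t]$ is a perfect Tate ring whose adic spectrum is an affinoid perfectoid space in characteristic $p$, with pseudo-uniformizer $t$, and whose diamond $\Spd A\dparen[t]$ receives a natural map to $\Spd A$.

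First, I would verify that $\Spd A\dparen[t] \to \Spd A$ is a v-cover: any complete algebraically closed nonarchimedean field over $\Spd A$ can be enlarged by adjoining a topologically nilpotent unit together with compatible $p$-power roots, which produces a point of $\Spd A\dparen[t]$ lying over it. The more substantive step is to identify the iterated fiber products as $\Spd A\dparen[t] \times_{\Spd A} \Spd A\dparen[t] \simeq \Spd A\dparen[t][u]$, and similarly for the triple product. The key point is that the Novikov ring $A\dparen[t][u]$ is exactly the completion that captures this v-topological fiber product, rather than the naïve tensor product flagged in the remark after Theorem~1.1.

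Given this identification, I would invoke v-descent of vector bundles on diamonds to equate vector bundles on $\Spd A$ with descent data along the \v{C}ech nerve of this cover, and then identify vector bundles on each affinoid perfectoid piece with finite projective modules over the corresponding Novikov ring. The problem reduces to Theorem~\ref{Thm:NovDescent}, whose output is the category of finite projective $A$-modules, i.e., vector bundles on $\Spec A$. For the finite étale version, the same strategy applies, using v-descent for finite étale v-sheaves on diamonds together with the equivalence between finite étale algebras over a ring and finite étale covers of its spectrum; the compatible algebra structure descends alongside the underlying finite projective module.

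The main obstacle is the geometric identification in the first step, namely showing that the higher fiber products of $\Spd A\dparen[t] \to \Spd A$ in the v-topology are the diamonds of the Novikov rings in additional variables. Once this geometric input is secured, the argument is a purely formal combination of v-descent on diamonds with Theorem~\ref{Thm:NovDescent}.
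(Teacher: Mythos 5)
You follow essentially the same strategy as the paper: take the v-cover $X = \Spd(A\ldparen t^{1/p^\infty} \rdparen, A[[t^{1/p^\infty}]]) \to \Spd A$, use v-descent of vector bundles to identify vector bundles on $\Spd A$ with descent data along the \v{C}ech nerve of this cover, and reduce to Theorem~\ref{Thm:NovDescent} with $\Gamma = \mathbb{Z}[p^{-1}]$. However, the step you flag as the ``main obstacle'' is exactly where your formulation needs correction: the identification $X \times_{\Spd A} X \simeq \Spd A\ldparen t^{1/p^\infty}, u^{1/p^\infty} \rdparen$ cannot hold literally. The two-variable Novikov ring carries no natural Huber ring topology (for instance, the powers of $t/u$ form an unbounded sequence for any candidate topology), so $\Spd$ of it is not defined, and in fact the fiber product $X \times_{\Spd A} X$ is not representable by a single affinoid perfectoid space. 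What the paper does instead is exhibit $X \times_{\Spd A} X$ as the increasing union of affinoid perfectoids $\Spd A[[t^{1/p^\infty}, u^{1/p^\infty}]]\bigl\langle t^n/u, u^n/t \bigr\rangle[1/tu]$ over $n \ge 2$, and then compute that the ring of global sections of its structure sheaf---an inverse limit---is exactly $A\ldparen t^{1/p^\infty}, u^{1/p^\infty} \rdparen$; the gluing isomorphism $\varphi$ and the cocycle condition are then correctly interpreted as data over these rings of global functions, not over affinoid fiber products, and the reduction to Theorem~\ref{Thm:NovDescent} proceeds. With that correction your outline matches the paper. For the finite \'{e}tale statement, the paper also supplies one extra ingredient beyond ``the algebra structure descends'': that $A \to B$ is finite \'{e}tale if and only if $A\ldparen t^{1/p^\infty} \rdparen \to B \otimes_A A\ldparen t^{1/p^\infty} \rdparen$ is, which follows from \cite[Theorem~08XE]{Sta24} because $A \to A\ldparen t^{1/p^\infty} \rdparen$ splits as a map of $A$-modules.
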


\begin{remark} \label{Rem:EtaleDescentFails}
  It is not true in general that a v-sheaf \'{e}tale over $\Spd A$ descends to a
  scheme \'{e}tale over $\Spec A$. Consider the following example from
  \cite[Example~18.2.1]{SW20}. Take the perfect $\mathbb{F}_p$-algebra $A =
  \mathbb{F}_p[t^{1/p^\infty}]$, which has associated v-sheaf $(\Spd A)(R, R^+)
  = R^+$ that admits a v-subsheaf $X(R, R^+) =
  R^{\circ\circ}$. Then $X \to \Spd A$ is an open embedding, hence \'{e}tale,
  but does not correspond to an \'{e}tale $A$-scheme. Indeed, for every closed
  point $x \in \Spec A$, the fiber $\Spd k(x) \times_{\Spd A} X$ is empty when
  $x \neq 0$ and is $\Spd \mathbb{F}_p$ when $x = 0$.
\end{remark}

\begin{remark}
  The first part of the theorem is false when we replace $\Spec A$ with an
  affinoid rigid analytic variety over a perfectoid field of characteristic
  zero, by the main result of \cite{Heu22}. On the other hand, the second part
  of the theorem remains true by \cite[Lemma~15.6]{Sch22}.
\end{remark}

The second part of Theorem~\ref{Thm:MainPerfectoid} is
used in \cite{DKvHZ24} to construct integral models of Hodge type
Shimura varieties at parahoric level. Specifically, it allows one to construct a
finite \'{e}tale cover of the special fiber of a Shimura variety from the finite
\'{e}tale cover of the associated v-sheaf.

\begin{remark} \label{Rem:Gleason}
  Ian Gleason announced via private communication an alternative proof of
  Corollary~\ref{Cor:FiniteEtaleOverSpd}, i.e., the finite \'{e}tale part of
  Theorem~\ref{Thm:MainPerfectoid}, which works by comparing the category
  of \'{e}tale $\mathbb{F}_\ell$-local systems on $\Spd A$ and $\Spec A$.
\end{remark}

\subsection{Outline of proof} \label{Sec:Outline}

We sketch the main ideas that go into the proof of Theorem~\ref{Thm:Main}.
In Section~\ref{Sec:Isocrystal}, we introduce the notion of a Novikov
isocrystal, which is somewhat easier to control than a descent datum as it involves
only one variable. In Subsection~\ref{Subsec:DDToIsoc}, we construct a functor
from the category of Novikov descent data for $\Gamma = \mathbb{R}$ to the
category of Novikov isocrystals, and prove that this functor is always fully
faithful. This allows us to reduce the effectivity of a Novikov descent datum
into a similar statement about the associated Novikov isocrystal.

In Subsection~\ref{Subsec:Reduced}, we prove effectivity of Novikov descent data
when $A$ is reduced and $\Gamma = \mathbb{R}$. We first take care the case when
$A$ is an algebraically closed field by using the structure theory of Novikov
isocrystals developed in Section~\ref{Sec:Isocrystal}. Next, we check that when
$A$ is an arbitrary product of algebraically closed fields, effectivity over $A$
from that over each field. This allows to deduce effectivity for a general
reduced ring $A$ by embedding $A$ into the product of the algebraic closures of
its residue fields.

In Subsection~\ref{Subsec:General}, we remove the hypothesis that $A$ is
reduced. We first check that effectivity of a Novikov descent datum can be
extended along a finite-order nilpotent thickening. On the other hand, it turns
out that we can reduce an arbitrary nilpotent thickening to a finite-order
thickening by studying the associated Novikov isocrystal. Combining the two
allows us to prove effectivity in the case of a general ring $A$ with $\Gamma =
\mathbb{R}$. Finally, in Subsection~\ref{Subsec:Exponent}, we remove the
hypothesis that $\Gamma = \mathbb{R}$ and prove Theorem~\ref{Thm:Main} in
its full generality.

\subsection{Acknowledgements}

I would like to thank Patrick Daniels, Pol van Hoften, and Mingjia Zhang for
help formulating the problem, which arose in our work on integral models of
Shimura varieties, as well as insightful conversations. I would also like to
thank my advisor Richard Taylor for his encouragements and diligence during our
meetings, where we carefully went through parts of the argument. I also would
like to thank Brian Conrad, Ian Gleason, Kiran Kedlaya, and the anonymous
referee for helpful comments and suggestions.

\section{Novikov rings} \label{Sec:Novikov}
\def\novexp{\Gamma}

\begin{definition} \label{Def:NovikovOneVar}
  Let $\Gamma \subseteq \mathbb{R}$ be an additive submonoid, and let $A$ be a
  commutative ring. We define the \textdef{Novikov ring} with exponents in
  $\Gamma$ and coefficients in $A$ to be the $A$-algebra
  \[
    A\dparen[t] = \biggl\lbrace \sum_{i \in I} a_i t^{d_i} : a_i \in A, d_i \in
    \novexp, \lbrace i \in I : d_i \lt C \rbrace \text{ is finite for all } C
    \in \mathbb{R} \biggr\rbrace.
  \]
  For $M$ an $A$-module, we similarly define the $A\dparen[t]$-module
  \[
    M\dparen[t] = \biggl\lbrace \sum_{i \in I} m_i t^{d_i} : m_i \in M, d_i \in
    \novexp, \lbrace i \in I : d_i \lt C \rbrace \text{ is finite for all } C
    \in \mathbb{R} \biggr\rbrace.
  \]
\end{definition}

\begin{remark} \label{Rem:NovikovOneVar}
  We may also describe the ring $A\dparen[t]$ and the module $M\dparen[t]$ as
  follows. We first consider the polynomial ring $A[t^\novexp]$ and the module
  $M[t^\novexp]$ where elements of $A[t^\novexp]$ are finite sums of monomials
  $a_i t^{d_i}$. We endow $A[t^\novexp]$ with the topology where $\bigoplus_{c
  \lt d \in \novexp} A t^d \subseteq A[t^\novexp]$ for $c \in \mathbb{R}$ form a
  neighborhood basis of $0$, and similarly for $M[t^\novexp]$. Then
  $A\dparen[t]$ and $M\dparen[t]$ are the completions of the topological ring
  $A[t^\novexp]$ and the topological module $M[t^\novexp]$.
\end{remark}

\begin{remark}
  Such rings naturally appear as coefficient rings in Lagrangian Floer theory
  \cite{FOOO09}, and are named after Novikov for his use of a formal variable in
  the coefficient ring of a homology theory to keep track of topological
  invariants \cite{Nov81}.
\end{remark}

To formulate our descent problem, we define a version of the Novikov ring with
multiple variables.

\begin{definition}
  Let $\Gamma \subseteq \mathbb{R}$ be an additive submonoid, let $A$ be a
  commutative ring, and let $M$ be an $A$-module. We define the
  \textdef{generalized Novikov ring} with $n$ variables
  \[
    A\dparenmult = \left\lbrace \sum_{i \in I} a_i t_1^{d_{i,1}} \dotsm
    t_n^{d_{i,n}} : \begin{matrix} a_i \in A, d_{i,j} \in \novexp, \lbrace i \in
      I : \sum_j s_j d_{i,j} \lt C \rbrace \text{ is} \\ \text{finite for all }
      C \in \mathbb{R} \text{ and } s_1, \dotsc, s_n \in \mathbb{R}_{\gt 0}
    \end{matrix} \right\rbrace
  \]
  and the $A\dparenmult$-module
  \[
    M\dparenmult = \left\lbrace \sum_{i \in I} m_i t_1^{d_{i,1}} \dotsm
    t_n^{d_{i,n}} : \begin{matrix} m_i \in M, d_{i,j} \in \novexp, \lbrace i \in
      I : \sum_j s_j d_{i,j} \lt C \rbrace \text{ is} \\ \text{finite for all }
      C \in \mathbb{R} \text{ and } s_1, \dotsc, s_n \in \mathbb{R}_{\gt 0}
    \end{matrix} \right\rbrace,
  \]
  where we note that the definitions of $A\dparen[t]$ and $M\dparen[t]$ agree
  with that of Definition~\ref{Def:NovikovOneVar}.
\end{definition}

\begin{example} \label{Exa:Novikov}
  When $\novexp = \mathbb{Z}_{\le 0}$, we have $A\dparen[t][u] = A[t^{-1},
  u^{-1}]$, and when $\novexp = \mathbb{Z}_{\ge 0}$, we have $A\dparen[t][u] =
  A[[t, u]]$. When $\novexp = \mathbb{Z}$,\def\novexp{} the ring
  $A\dparen[t][u]$ is in general much larger than the ring
  $A\dbrack[t][u]{}[(tu)^{-1}]$ since it contains convergent power series such
  as
  \[
    \sum_{n = 0}^\infty t^{-n} u^{n^2} \in A\dparen[t][u].
  \]
  On the other hand, even if $A$ is a field, the ring $A\dparen[t][u]$ is not a
  field because $t-u$ is not invertible.
\end{example}

\begin{remark} \label{Rem:NovikovMultVar}
  \def\multbrack{[t_1^\novexp, \dotsc, t_n^\novexp]}
  \def\multbrackplus{[t_1^{\novexp_{\ge 0}}, \dotsc, t_n^{\novexp_{\ge 0}}]}
  As in Remark~\ref{Rem:NovikovOneVar}, we may construct the Novikov rings
  $A\dparenmult$ as a completion of the polynomial ring $A\multbrack$. Here, we
  given $A\multbrack$ the topology generated by additive cosets of
  \[
    \bigoplus_{d_1, \dotsc, d_n \in \Gamma, \sum_j s_j d_j \ge C} A t_1^{d_1}
    \dotsm t_n^{d_n}
  \]
  where $s_1, \dotsc, s_n \in \mathbb{R}_{\gt 0}$ and $C \in \mathbb{R}$. This
  is indeed a topological ring, and its completion is $A\dparenmult$. One can
  similarly define a topological $A\multbrack$-module $M\multbrack$ whose
  completion is $M\dparenmult$. In particular, it follows from this discussion
  that $A\dparenmult$ indeed has a natural ring structure, and $M\dparenmult$
  indeed has a natural structure of a module over it.
\end{remark}

\begin{remark} \label{Rem:NovikovMultRatLoc}
  In the case when $\novexp \subseteq \mathbb{R}$ is a nontrivial subgroup,
  there is a more geometric interpretation. Fix an element $0 < \gamma \in
  \Gamma$, and consider the ring $A[[t_1^{\Gamma_{\ge 0}}, \dotsc,
  t_n^{\Gamma_{\ge 0}}]]$, which is completion of $A[t_1^{\Gamma_{\ge 0}},
  \dotsc, t_n^{\Gamma_{\ge 0}}]$ with respect to the $(t_1^\gamma, \dotsc,
  t_n^\gamma)$-adic topology. This defines an affinoid pre-adic space
  \[
    X = \Spa(A[[t_1^{\Gamma_{\ge 0}}, \dotsc, t_n^{\Gamma_{\ge 0}}]],
    A[[t_1^{\Gamma_{\ge 0}}, \dotsc, t_n^{\Gamma_{\ge 0}}]]).
  \]
  The locus on which $\lvert t_1 \rvert, \dotsc, \lvert t_n \rvert > 0$ is the
  union of the rational localizations
  \begin{align*}
    \Spa(R_m, R_m^+) &= \lbrace \lvert - \rvert \in X : \lvert t_j \rvert >
    \lvert t_i \rvert^m > 0 \text{ for all } 1 \le i, j \le n \rbrace, \\
    R_m &= \left\lbrace \sum_{i \in I} a_i t_1^{d_{i,1}} \dotsm
    t_n^{d_{i,n}} : \begin{matrix} a_i \in A, d_{i,j} \in \novexp, \lbrace i \in
      I : \sum_j s_j d_{i,j} \lt C \rbrace \text{ is} \\ \text{finite for all }
      C \in \mathbb{R} \text{ and } s_1, \dotsc, s_n \in \mathbb{R}_{\gt 0}
      \\ \text{ with } m \min\lbrace s_i \rbrace \ge \max\lbrace s_i \rbrace
    \end{matrix} \right\rbrace.
  \end{align*}
  Then we have $A\dparenmult = \varprojlim_m R_m$.
\end{remark}

\begin{lemma} \label{Lem:NovikovExact}
  The functor from $A$-modules to $A\dparenmult$-modules sending $M \mapsto
  M\dparenmult$ is exact.
\end{lemma}

\begin{proof}
  Given surjective $A$-module homomorphism $f \colon M_1 \twoheadrightarrow
  M_2$, the induced map
  \[
    f\dparenmult \colon M_1\dparenmult \to M_2\dparenmult
  \]
  surjective because we can lift each nonzero coefficient individually. The
  kernel $\ker f\dparenmult$ consists of those series whose coefficients all lie
  in $\ker f$, and hence is $(\ker f)\dparenmult$.
\end{proof}

In particular, when $M$ is a finitely presented $A$-module, the natural map
\[
  M \otimes_A A\dparenmult \to M\dparenmult
\]
is an isomorphism. As a consequence, for any other $A$-module $N$ there is a
canonical isomorphism
\[
  \Hom_{A\dparenmult}(M\dparenmult, N\dparenmult) \cong \Hom_A(M, N)\dparenmult
\]
of $A\dparenmult$-modules.

\subsection{Novikov descent data}

We define $A$-algebra homomorphisms
\begin{align*}
  \pi_1^\ast &\colon A\dparen[t] \to A\dparen[t][u]; & t &\mapsto t, \\
  \pi_2^\ast &\colon A\dparen[t] \to A\dparen[t][u]; & t &\mapsto u, \\
  \pi_{12}^\ast &\colon A\dparen[t][u] \to
  A\dparen[t][u][v]; & t &\mapsto t, u \mapsto
  u, \\
  \pi_{13}^\ast &\colon A\dparen[t][u] \to
  A\dparen[t][u][v]; & t &\mapsto t, u \mapsto
  v, \\
  \pi_{23}^\ast &\colon A\dparen[t][u] \to
  A\dparen[t][u][v]; & t &\mapsto u, u \mapsto
  v.
\end{align*}
They form an augmented semi-cosimplicial diagram of $A$-algebras
\[
  A \to A\dparen[t] \rightrightarrows A\dparen[t][u] \rrrarrows
  A\dparen[t][u][v].
\]
We will also use the notation $\rho_1^\ast = \pi_{12}^\ast \circ \pi_1^\ast =
\pi_{13}^\ast \circ \pi_1^\ast$ and similarly $\rho_2^\ast = \pi_{12}^\ast \circ
\pi_2^\ast = \pi_{23}^\ast \circ \pi_1^\ast$ and $\rho_3^\ast = \pi_{13}^\ast
\circ \pi_2^\ast = \pi_{23}^\ast \circ \pi_2^\ast$. For a ring homomorphism
$f^\ast \colon A \to B$ and an $A$-module $M$, we will also denote $f^\ast M
= M \otimes_{A, f^\ast} B$, and write $f^\ast \colon M \to f^\ast M$ for the map
$m \mapsto m \otimes 1$ by making an abuse of notation.

\begin{definition} \label{Def:DescentDatum}
  A \textdef{Novikov descent datum} for a ring $A$ is a pair $(M, \varphi)$
  where $M$ is a finite projective module over $A\dparen[t]$ and
  \[
    \varphi \colon \pi_1^\ast M \xrightarrow{\cong} \pi_2^\ast M
  \]
  is an $A\dparen[t][u]$-linear automorphism satisfying the cocycle condition,
  which states that the diagram
  \[ \begin{tikzcd}
    \rho_1^\ast M \arrow{rr}{\pi_{13}^\ast \varphi} \arrow{dr}[']{\pi_{12}^\ast
    \varphi} & & \rho_3^\ast M \\ & \rho_2^\ast M. \arrow{ru}[']{\pi_{23}^\ast
    \varphi}
  \end{tikzcd} \]
  commutes. We denote by $\novdd(A)$ the category of Novikov descent data.
\end{definition}

Denote by $\vect(A)$ the category of finite projective $A$-modules. There is a
natural functor
\[
  \vect(A) \to \novdd(A); \quad M_0 \mapsto (M_0 \otimes_A A\dparen[t], \id_{M_0
  \otimes_A A\dparen[t][u]}).
\]

\begin{remark} \label{Rem:Reformulation}
  Our main theorem, Theorem~\ref{Thm:Main}, may now be reformulated to state
  that the functor $\vect(A) \to \novdd(A)$ is an equivalence of categories.
\end{remark}

\begin{definition} \label{Def:Support}
  For an element $x \in A\dparenmult$ expressed as
  \[
    x = \sum_{i \in I} a_i t_1^{d_{i,1}} \dotsm t_n^{d_{i,n}}
  \]
  where $a_i \neq 0$ for all $i \in I$ and $(d_{i,1}, \dotsc, d_{i,n}) \neq
  (d_{j,1}, \dotsc, d_{j,n})$ for $i \neq j$ in $I$, we define the
  \textdef{support} of $x$ as
  \[
    \supp(x) = \lbrace (d_{i,1}, \dotsc, d_{i,n}) : i \in I \rbrace \subseteq
    \Gamma^n.
  \]
  We similarly define the support of an element of $M\dparenmult$ for $M$ an
  $A$-module.
\end{definition}

The support of $0 \in A\dparenmult$ is empty. We always have $\supp(x + y)
\subseteq \supp(x) \cup \supp(y)$ and $\supp(xy) \subseteq \supp(x) + \supp(y)$,
where $+$ is the Minkowski sum defined as $X + Y = \lbrace x + y : x \in X,
y \in Y \rbrace$. By definition, for every element $x \in A\dparenmult$ or $x
\in M\dparenmult$, the intersection of $\supp(x)$ with the half-space
\[
  \lbrace (d_1, \dotsc, d_n) : s_1 d_1 + \dotsb + s_n d_n < C \rbrace \subseteq
  \mathbb{R}^n
\]
is finite whenever $s_1, \dotsc, s_n > 0$.

\begin{proposition} \label{Prop:NovDescentFullyFaithful}
  The functor $\vect(A) \to \novdd(A)$ is fully faithful.
\end{proposition}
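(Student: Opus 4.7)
The plan is to unravel the morphism condition in $\novdd(A)$ and directly compute using the $\Hom$ identity noted just before the proposition. Given $M_0, N_0 \in \vect(A)$, these are finitely presented, so I may identify $M_0 \otimes_A A\dparen[t]$ with $M_0\dparen[t]$, and analogously after tensoring up to $A\dparen[t][u]$ and for $N_0$. A morphism in $\novdd(A)$ from $(M_0\dparen[t], \id)$ to $(N_0\dparen[t], \id)$ is then an $A\dparen[t]$-linear map $f \colon M_0\dparen[t] \to N_0\dparen[t]$ such that $\pi_1^\ast f = \pi_2^\ast f$ as $A\dparen[t][u]$-linear maps $M_0\dparen[t][u] \to N_0\dparen[t][u]$, after canonically identifying $\pi_1^\ast (M_0\dparen[t])$ and $\pi_2^\ast(M_0\dparen[t])$ with $M_0\dparen[t][u]$ via the structure map $A \to A\dparen[t][u]$.

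The key step is a support comparison. Using the $\Hom$ identity I can write $f = \sum_i f_i t^{d_i}$ with $f_i \in \Hom_A(M_0, N_0)$ and pairwise distinct $d_i \in \Gamma$, and similarly view $\pi_1^\ast f$ and $\pi_2^\ast f$ as elements of $\Hom_A(M_0, N_0)\dparen[t][u]$. Explicitly, $\pi_1^\ast f = \sum_i f_i t^{d_i}$ has support $\lbrace (d_i, 0) : f_i \neq 0 \rbrace \subseteq \Gamma^2$, whereas $\pi_2^\ast f = \sum_i f_i u^{d_i}$ has support $\lbrace (0, d_i) : f_i \neq 0 \rbrace$. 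Equality of these two sets forces $d_i = 0$ whenever $f_i \neq 0$, so $f$ must come from scalar extension of a unique $f_0 \in \Hom_A(M_0, N_0)$. Since any such $f_0$ evidently satisfies the compatibility, this produces the desired bijection and proves full faithfulness.

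I do not expect any serious obstacle here: the argument only invokes the $\Hom$ identity already recorded in the paper, together with a one-line comparison of supports in the two-variable generalized Novikov ring, and uses no cocycle information beyond the $\pi_1^\ast = \pi_2^\ast$ equation. The substantive content of Theorem~\ref{Thm:NovDescent}, essential surjectivity, is what the later sections of the paper take up.
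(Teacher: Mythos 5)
Your argument is essentially the paper's own proof: both identify morphisms of descent data with elements of $\Hom_A(M_0, N_0)\dparen[t]$ satisfying $\pi_1^\ast f = \pi_2^\ast f$, then compare the supports $\supp(f) \times \lbrace 0 \rbrace$ and $\lbrace 0 \rbrace \times \supp(f)$ to force $\supp(f) \subseteq \lbrace 0 \rbrace$. The only difference is that you spell out the series expansion $f = \sum_i f_i t^{d_i}$ a bit more explicitly; the substance is identical.
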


\begin{proof}
  Given finite projective $A$-modules $M_0$ and $N_0$, let
  \[
    f \colon M_0\dparen[t] \to N_0\dparen[t]
  \]
  be an $A\dparen[t]$-linear map compatible with the descent data on both sides.
  This means that the diagram
  \[ \begin{tikzcd}
    M_0\dparen[t][u] \arrow{r}{\pi_1^\ast f} \arrow[equals]{d} &
    N_0\dparen[t][u] \arrow[equals]{d} \\ M_0\dparen[t][u] \arrow{r}{\pi_2^\ast
    f} & N_0 \dparen[t][u]
  \end{tikzcd} \]
  commutes, where both $\pi_1^\ast f$ and $\pi_2^\ast f$ are
  $A\dparen[t][u]$-linear. Viewing $f$ as an element of $\Hom_A(M_0,
  N_0)\dparen[t]$, we see that compatibility with descent data translates to
  \[
    \pi_1^\ast f = \pi_2^\ast f \in \Hom_A(M_0, N_0)\dparen[t][u].
  \]
  We see that $\supp(\pi_1^\ast f) = \supp(f) \times \lbrace 0 \rbrace$ while
  $\supp(\pi_2^\ast f) = \lbrace 0 \rbrace \times \supp(f)$. This immediately
  shows that $\supp(f) \subseteq \lbrace 0 \rbrace$ and hence $f \in \Hom_A(M_0,
  N_0)$.
\end{proof}

\subsection{Topological rings and modules}
\def\novexp{\mathbb{R}}

We start by reviewing general facts about finite projective modules over
topological rings, which we do not assume are Hausdorff. (In practice, we will
apply the results to $A\dparen[t]$, which is always Hausdorff.) A
\textdef{topological ring} is a ring $R$ together with a topology such that the
addition and multiplication maps $R \times R \to R$ are continuous, where $R
\times R$ is given the product topology. A \textdef{topological module} over a
topological ring $R$ is an $R$-module $M$ together with a topology such that the
addition map $M \times M \to M$ and the scalar multiplication map $R \times M
\to M$ are continuous.

\begin{definition}
  Let $R$ be a topological ring, and let $M$ be a finite projective $R$-module.
  The \textdef{canonical topology} on $M$ is defined as the coarsest topology
  under which all $R$-linear homomorphisms $M \to R$ are continuous.
\end{definition}

\begin{lemma} \label{Lem:CanonicalTopology}
  Let $R$ be a topological ring, and let $M, N$ be finite projective
  $R$-modules.
  \begin{enumerate}
    \item The canonical topology on $R$ as an $R$-module agrees with the
      topology on $R$.
    \item The canonical topology on $M$ makes $M$ into a topological $R$-module.
    \item The canonical topology on $M \oplus N$ is the product topology of the
      canonical topologies on $M$ and $N$.
    \item Every $R$-linear map $M \to N$ is continuous with respect to the
      canonical topologies on $M$ and $N$.
  \end{enumerate}
\end{lemma}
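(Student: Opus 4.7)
The plan is to treat each part as a statement about an initial topology (the canonical topology on $M$ is precisely the initial topology with respect to the family of all $R$-linear maps $M \to R$), and then exploit the universal property: a map $X \to M$ into a module with its canonical topology is continuous if and only if its composition with every $R$-linear $M \to R$ is continuous.

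For part (1), I would argue in two directions. The identity $R \to R$ is $R$-linear, so the canonical topology must be at least as fine as the given topology. Conversely, every $R$-linear $R \to R$ is multiplication by a fixed element, which is continuous for the given topology by continuity of the multiplication $R \times R \to R$; so the given topology is one candidate that makes every $R$-linear map continuous, hence is at least as fine as the coarsest such topology, namely the canonical one.

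For parts (2) and (4), I would apply the universal property directly. For (4), a linear map $f \colon M \to N$ is continuous into the canonical topology on $N$ iff every $g \circ f$ with $g \colon N \to R$ linear is continuous, and each such composition is itself an $R$-linear map $M \to R$, hence continuous by definition. For (2), to see that addition $M \times M \to M$ is continuous, I check each composition with a linear $g \colon M \to R$, which becomes $(m_1,m_2)\mapsto g(m_1) + g(m_2)$, continuous as a sum of two continuous maps; the scalar multiplication $R \times M \to M$ reduces similarly to checking $(r,m) \mapsto r g(m)$, continuous by the topological ring/module hypothesis on $R$.

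For part (3), which I expect to be the most delicate because it requires comparing two initial topologies, I would use the fact that every $R$-linear map $M \oplus N \to R$ splits uniquely as $h(m,n) = f(m) + g(n)$ with $f,g$ linear. Then the family of all such $h$ and the family consisting of maps of the form $(m,n) \mapsto f(m)$ together with $(m,n) \mapsto g(n)$ generate the same initial topology: each $h$ is a continuous combination of the latter two, while each of the latter is itself an $R$-linear map into $R$. The initial topology with respect to that enlarged family is by construction the product of the initial topologies on $M$ and on $N$ separately, i.e., the product of the canonical topologies. The only point requiring minor care is that sums of continuous maps into $R$ are continuous, which uses continuity of addition in the topological ring $R$; no Hausdorff hypothesis is needed anywhere.
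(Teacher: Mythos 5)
Your proof is correct, and your arguments for (1), (3), and (4) are essentially the paper's: (1) by the two-sided comparison via $\id$ and multiplication maps, (4) by the universal property of the initial topology, and (3) by observing that the projections are linear (hence continuous) while every $R$-linear $h \colon M \oplus N \to R$ decomposes as $h(m,n) = f(m)+g(n)$ and is therefore continuous in the product topology. For (2), however, you take a genuinely different route: you verify continuity of addition and scalar multiplication on $M$ directly from the universal property, by checking that each composition with a linear $g \colon M \to R$ is continuous (namely $(m_1,m_2)\mapsto g(m_1)+g(m_2)$ and $(r,m)\mapsto r\,g(m)$, each continuous because $R$ is a topological ring). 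The paper instead derives (2) from (3) and (4) by choosing a splitting $M \oplus M' \cong R^{\oplus n}$ and routing the structure maps of $M$ through $R^{\oplus n}$. Your version is shorter and, notably, never invokes finite projectivity of $M$, so it actually establishes (2) for an arbitrary $R$-module equipped with the canonical (initial) topology; the paper's argument genuinely uses the splitting and hence projectivity. Both are valid; the paper's reduction to the free case is perhaps more concrete, but yours is more economical and slightly more general.
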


\begin{proof}
  The statement (4) follows from unraveling the definition of the coarsest
  topology; the canonical topology on $N$ is generated by preimages of open
  sets $U \subseteq R$ along $R$-linear maps $N \to R$, and its preimage in $M$
  is the preimage of $U$ under the composition $M \to N \to R$, hence open. For
  (1), we note that the canonical topology is at least as fine as the ring
  topology, since $\id \colon R \to R$ is linear. On the other hand, it is fine
  enough because every scaling $r \colon R \to R$ for $r \in R$ is continuous.
  For (3), we similarly note that the canonical topology is at least as fine as
  a product topology, because the projection maps $M \oplus N \to M$ and $M
  \oplus N \to N$ are continuous. It is fine enough, because any $M \oplus N \to
  R$ can be written as a composition
  \[
    M \times N \xrightarrow{f \times g} R \times R \xrightarrow{+} R
  \]
  where $f \colon M \to R$ and $g \colon N \to R$ are $R$-linear, hence
  continuous. Finally, for (2), we choose a splitting $M \oplus M^\prime \cong
  R^{\oplus n}$ of $R$-modules. By (4), both the inclusion $M \hookrightarrow
  R^{\oplus n}$ and the projection $R^{\oplus n} \twoheadrightarrow M$ are
  continuous. The addition and scalar multiplication maps on $M$ can be
  written as the composition
  \[
    M \times M \hookrightarrow R^{\oplus n} \times R^{\oplus n} \xrightarrow{+}
    R^{\oplus n} \twoheadrightarrow M, \quad R \times M \hookrightarrow R \times
    R^{\oplus n} \xrightarrow{\times} R^{\oplus n} \twoheadrightarrow M
  \]
  of continuous maps.
\end{proof}

We focus on the case when there is one variable and the exponent group is
$\Gamma = \mathbb{R}$. For convenience, for $A$ a commutative ring and $M$ an
$A$-module, we denote
\[
  A\dbrack[t] = A\ldparen t^{\mathbb{R}_{\ge 0}} \rdparen, \quad M\dbrack[t] =
  M\ldparen t^{\mathbb{R}_{\ge 0}} \rdparen.
\]

\begin{definition}
  We define the \textdef{$t$-adic topology} on $A\dparen[t]$ to be the topology
  generated by $x + t^d A\dbrack[t]$ for varying $x \in A\dparen[t]$ and $d \in
  \mathbb{R}$. We similarly define the \textdef{$t$-adic topology} on
  $M\dparen[t]$ to be the topology generated by $x + t^d M\dbrack[t]$ for
  varying $x \in M\dparen[t]$ and $d \in \mathbb{R}$.
\end{definition}

It can be readily verified that $A\dparen[t]$ is a topological ring. Indeed,
it is a complete Tate ring in the sense of \cite{Hub93}, where $A\dbrack[t]$ is
a ring of definition. It can also be checked that $M\dparen[t]$ is a complete
topological $A\dparen[t]$-module.

If $M$ is a finite projective $A$-module, then $M\dparen[t]$ is a finite
projective $A\dparen[t]$-module, and moreover the $t$-adic topology on
$M\dparen[t]$ agrees with the canonical topology. We now a prove a converse
statement.

\begin{lemma} \label{Lem:FiniteProjTopology}
  Let $A$ be a commutative ring, and let $M$ be an $A$-module. Assume that
  $M\dparen[t]$ is a finite projective $A\dparen[t]$-module, and moreover the
  $t$-adic topology on $M\dparen[t]$ agrees with the canonical topology. Then
  $M$ is a finite projective $A$-module.
\end{lemma}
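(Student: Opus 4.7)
The plan is to show $M$ is finite projective in two steps: first finite generation over $A$, and then exhibition as a direct summand of a finitely generated free $A$-module by extracting the $t^0$-coefficient of a section.

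For finite generation, I would fix a surjection $\pi \colon A\dparen[t]^n \twoheadrightarrow M\dparen[t]$ coming from generators of $M\dparen[t]$, rescaling by powers of $t$ so that each $x_i := \pi(e_i)$ lies in $M\dbrack[t]$, together with a section $s \colon M\dparen[t] \to A\dparen[t]^n$ of $\pi$ provided by projectivity. By Lemma~\ref{Lem:CanonicalTopology}(4), $s$ is continuous in the canonical topologies; combined with the hypothesis that the canonical and $t$-adic topologies agree on $M\dparen[t]$, and the fact that the canonical topology on $A\dparen[t]^n$ coincides with the product $t$-adic topology, this continuity passes to the $t$-adic topologies. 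Hence there exists a constant $c \ge 0$ with $s(t^c M\dbrack[t]) \subseteq A\dbrack[t]^n$, so that $t^c M\dbrack[t] \subseteq \pi(A\dbrack[t]^n)$. Writing $x_i = \sum_s m_{i,s} t^s$ and, for each $m \in M$, a relation $t^c m = \sum_i v_i x_i$ with $v_i \in A\dbrack[t]$, I would extract the $t^c$-coefficient to obtain $m = \sum_{i,s} v_{i, c-s}\, m_{i,s}$, summed over $(i, s)$ with $s \in \supp(x_i) \cap [0, c]$---a finite set, by local finiteness of $\supp(x_i)$. This exhibits $M$ as generated over $A$ by the finite set $\{m_{i,s} : 1 \le i \le n,\, s \in \supp(x_i) \cap [0, c]\}$.

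Once $M$ is finitely generated, pick a surjection $\pi_0 \colon A^N \twoheadrightarrow M$. Applying the exact functor $(\cdot)\dparen[t]$, which identifies $A^N\dparen[t]$ with $A\dparen[t]^N$, produces a surjection $\pi := \pi_0\dparen[t] \colon A\dparen[t]^N \twoheadrightarrow M\dparen[t]$. Projectivity of $M\dparen[t]$ yields a section $s \colon M\dparen[t] \to A\dparen[t]^N$ of $\pi$. For $m \in M \subseteq M\dparen[t]$, expand $s(m) = \sum_r w_r t^r$ with $w_r \in A^N$; since $\pi \circ s = \mathrm{id}$, we obtain $\sum_r \pi_0(w_r) t^r = m$ in $M\dparen[t]$. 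Comparing supports forces $\pi_0(w_0) = m$ and $\pi_0(w_r) = 0$ for $r \neq 0$. Hence $\bar{s} \colon m \mapsto w_0$ is an $A$-linear splitting of $\pi_0$, exhibiting $M$ as a direct summand of the free module $A^N$ and therefore as a finite projective $A$-module.

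The hard part will be the first step, which requires converting the topological hypothesis into a uniform bound $c$ on the relevant $t$-exponents appearing in the generators, so that the locally finite support of each $x_i$ collapses to finitely many contributions when reconstructing $m$. The second step is essentially formal once finite generation and the exactness of $(\cdot)\dparen[t]$ are in hand.
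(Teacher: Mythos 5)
Your argument is correct, and the two proofs agree on the first step: both use continuity/openness of the split surjection $A\dparen[t]^n \twoheadrightarrow M\dparen[t]$ (via Lemma~\ref{Lem:CanonicalTopology} and the hypothesis that the canonical topology is $t$-adic) to get a bound $c$, and then extract a single Novikov coefficient to see that the coefficients $m_{i,s}$ with $s \le c$ generate $M$. Where you diverge is the second step. The paper repeats the finite-generation argument on the kernel $K$ of $A^m \twoheadrightarrow M$ (using that $K\dparen[t]$ is a direct summand of $A\dparen[t]^m$ with the subspace, hence $t$-adic, topology) to conclude $M$ is finitely presented, and then invokes $M \otimes_A A\dparen[t] \cong M\dparen[t]$ together with descent of projectivity along the universally injective map $A \hookrightarrow A\dparen[t]$ (\cite[Theorem~08XD]{Sta24}). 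You instead take an $A\dparen[t]$-linear section $s$ of $A\dparen[t]^N \twoheadrightarrow M\dparen[t]$ and observe that extracting the $t^0$-coefficient gives an $A$-linear map $\bar{s}\colon M \to A^N$ splitting $\pi_0$, because $\pi = \pi_0\dparen[t]$ acts coefficient-wise. This is a clean and more elementary route: it sidesteps the need to establish finite presentation separately and avoids the appeal to the Stacks Project descent theorem, directly exhibiting $M$ as a direct summand of a finite free module. Both approaches are valid; yours is shorter and more self-contained, while the paper's is closer to a standard descent template (which it then reuses at the end of Proposition~\ref{Prop:LatticeIsocrystal}).
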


\begin{proof}
  We first show that $M$ is finitely generated as an $A$-module. We choose a
  finite collection of generators $x_1, \dotsc, x_n \in M\dparen[t]$ as an
  $A\dparen[t]$-module. Since $M\dparen[t]$ is projective, the natural
  map
  \[
    A\dparen[t]^{\oplus n} \xrightarrow{(x_1, \dotsc, x_n)} M\dparen[t]
  \]
  splits and hence is open by Lemma~\ref{Lem:CanonicalTopology}. This implies
  that there exists a large enough $N$ for which
  \[
    t^N M\dbrack[t] \subseteq \sum_{i=1}^n x_i A\dbrack[t].
  \]
  In particular, for every $m \in M$, the element $t^N m \in M\dbrack[t]$ can be
  written as $A\dbrack[t]$-linear combination of $x_1, \dotsc, x_n$. Therefore
  once we write $x_i = \sum_j m_{ij} t^{d_{ij}}$, the collection of $m_{ij}$
  with $d_{ij} \le N$ generate $M$, and so $M$ is finitely generated.

  We now choose an $A$-linear surjection
  \[
    0 \to K \to A^{\oplus m} \to M \to 0.
  \]
  It follows that
  \[
    0 \to K\dparen[t] \to A\dparen[t]^{\oplus m} \to M\dparen[t] \to 0
  \]
  is short exact, and hence split. This implies that $K\dparen[t]$ is finite
  projective over $A\dparen[t]$. Moreover the canonical topology of
  $K\dparen[t]$ is the subspace topology induced from $A\dparen[t]^{\oplus m}$,
  again by Lemma~\ref{Lem:CanonicalTopology}, and hence agrees with the $t$-adic
  topology. By applying the same argument, we conclude that $K$ is also finitely
  generated, and therefore $M$ is finitely presented.

  This shows that the natural map
  \[
    M \otimes_A A\dparen[t] \to M\dparen[t]
  \]
  is an isomorphism. Since $A \hookrightarrow A\dparen[t]$ splits as a map of
  $A$-modules, it is universally injective. Then
  \cite[Theorem~08XD]{Sta24} applies to show that $M$ is a finite
  projective $A$-module.
\end{proof}

\section{Novikov isocrystals} \label{Sec:Isocrystal}

We fix a real number $\lambda \gt 1$. For each commutative ring $A$, we define
the $A$-algebra automorphism
\[
  F \colon A\dparen[t] \to A\dparen[t]; \quad \sum_{i \in I} a_i t^{d_i} \mapsto
  \sum_{i \in I} a_i t^{\lambda d_i}.
\]

\begin{definition}
  A \textdef{Novikov isocrystal} over a ring $A$ is a pair $(M, F_M)$, where $M$
  is a finite projective $A\dparen[t]$-module and $F_M \colon M \to M$ is a
  $F$-semilinear bijection. A morphism $f \colon (M, F_M) \to (N, F_N)$ between
  two Novikov isocrystals is an $A\dparen[t]$-linear map $f \colon M \to N$
  satisfying $f \circ F_M = F_N \circ f$. We denote by $\novisoc(A)$ the
  category of Novikov isocrystals over $A$.
\end{definition}

\begin{remark}
  The name ``Novikov isocrystal'' comes from the analogy with the notion of a
  F-isocrystal \cite{Man63}. For $k$ a perfect field in characteristic $p > 0$,
  a \textdef{F-isocrystal} over $k$ is a $W(k)[p^{-1}]$-vector space $M$
  together with a $F$-semilinear bijection $F_M \colon M \to M$, where $W(k)$ is
  the ring of $p$-typical Witt vectors on $k$ and $F \colon W(k)[p^{-1}] \to
  W(k)[p^{-1}]$ is the canonical lift of the absolute Frobenius.
\end{remark}

Note that a ring homomorphism $A \to B$ induces a $F$-equivariant ring
homomorphism $A\dparen[t] \to B\dparen[t]$, and hence base changing along this
map defines a functor $\novisoc(A) \to \novisoc(B)$. There is also a natural
functor
\[
  \vect(A) \to \novisoc(A); \quad M_0 \mapsto (M_0 \otimes_A A\dparen[t],
  \id_{M_0} \otimes F).
\]

\begin{proposition} \label{Prop:VectToIsocFullyFaithful}
  For every ring $A$, the functor $\vect(A) \to \novisoc(A)$ is fully
  faithful.
\end{proposition}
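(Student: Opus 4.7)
The plan is to mimic the proof of Proposition~\ref{Prop:NovDescentFullyFaithful}, replacing the two-variable support obstruction used there with a one-variable obstruction coming from the scaling action of $F$ on supports.

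First I would take a morphism $f \colon (M_0\dparen[t], \id \otimes F) \to (N_0\dparen[t], \id \otimes F)$ and use the canonical isomorphism $\Hom_{A\dparen[t]}(M_0\dparen[t], N_0\dparen[t]) \cong \Hom_A(M_0, N_0)\dparen[t]$ recalled in Section~\ref{Sec:Novikov} to write $f = \sum_i \phi_i t^{d_i}$ with $\phi_i \in \Hom_A(M_0, N_0)$ and distinct $d_i \in \mathbb{R}$. Since $F(1) = 1$, the semilinear map $F_M = \id_{M_0} \otimes F$ fixes every constant section $m \in M_0$. Evaluating the identity $f \circ F_M = F_N \circ f$ on such $m$ and using the $F$-semilinearity of $F_N$ to pull out $F(t^{d_i}) = t^{\lambda d_i}$, the equivariance condition becomes $\sum_i \phi_i t^{\lambda d_i} = \sum_i \phi_i t^{d_i}$ inside $\Hom_A(M_0, N_0)\dparen[t]$.

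Next I would deduce that $\supp(f) \subseteq \{0\}$. Writing $S = \supp(f)$, the identity above forces $\lambda S = S$ as subsets of $\mathbb{R}$, with matching coefficients. The Novikov finiteness condition makes $S$ well-ordered, so if $S$ is nonempty it has a minimum $d_0$. Since multiplication by $\lambda > 0$ is order-preserving, $\min \lambda S = \lambda d_0$, and the equality $\lambda S = S$ forces $\lambda d_0 = d_0$, hence $d_0 = 0$. Applying the same argument to $S \setminus \{0\}$, which is again $\lambda$-invariant and well-ordered, shows it must be empty. Therefore $f = \phi_0 \in \Hom_A(M_0, N_0)$, giving full faithfulness.

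Since the argument is essentially parallel to that of Proposition~\ref{Prop:NovDescentFullyFaithful}, I do not anticipate any serious obstacle; the only subtle point is correctly unwinding the $F$-semilinearity of $F_N$, which cleanly reduces the problem to a well-ordering argument on $\mathbb{R}$ driven by $\lambda > 1$.
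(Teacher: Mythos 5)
Your proof is correct and follows essentially the same route as the paper's: view $f$ as an element of $\Hom_A(M_0, N_0)\dparen[t]$ via the isomorphism $\Hom_{A\dparen[t]}(M_0\dparen[t], N_0\dparen[t]) \cong \Hom_A(M_0, N_0)\dparen[t]$, deduce that $\supp(f) = \lambda\supp(f)$ from $F$-equivariance, and then use the Novikov finiteness condition to force $\supp(f) \subseteq \{0\}$. The only cosmetic difference is in the last step: the paper observes directly that $[-1,1] \cap \supp(f)$ is finite yet contains $\lambda^{-n}d$ for all $n \geq 0$ whenever $0 \neq d \in \supp(f)$, whereas you argue via the well-orderedness of $\supp(f)$ and order-preservation of multiplication by $\lambda$; both are elementary consequences of the same left-finiteness of the support.
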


\begin{proof}
  Let $M_0$ and $N_0$ be two finite projective $A$-modules, and let $f \colon
  M_0 \otimes_A A\dparen[t] \to N_0 \otimes_A A\dparen[t]$ be an
  $A\dparen[t]$-linear map satisfying $f \circ (\id_{M_0} \otimes F) =
  (\id_{N_0} \otimes F) \circ f$. Regarding $f$ as an
  element of
  \[
    \Hom_A(M_0, N_0) \otimes_A A\dparen[t] = \Hom_A(M_0, N_0)\dparen[t],
  \]
  we see that $f$ must be fixed under the automorphism $\id_{\Hom_A(M_0, N_0)}
  \otimes F$, and hence $\supp(f) = \lambda \supp(f)$. On the other hand, the
  intersection $[-1, 1] \cap \supp(f)$ is finite, and therefore $\supp(f)
  \subseteq \lbrace 0 \rbrace$. This shows that $f \in \Hom_A(M_0, N_0)$.
\end{proof}

\begin{remark}
  \def\vectaut{\mathsf{VectAut}}
  The functor $\vect(A) \to \novisoc(A)$ is far from being essentially
  surjective. If we denote by $\vectaut(A)$ the category of pairs $(M_0,
  F_{M_0})$ where $M_0$ is a finite projective $A$-module and $F_{M_0} \colon
  M_0 \to M_0$ is an $A$-linear automorphism, there is a fully faithful functor
  $\vectaut(A) \to \novisoc(A)$ sending $(M_0, F_{M_0})$ to $(M_0\dparen[t],
  F_{M_0} \otimes F)$. Even then, the functor $\vectaut(A) \to \novisoc(A)$ is
  not necessarily essentially surjective. Already for $k$ a field, the Novikov
  isocrystal
  \[
    M = k\dparen[t]^{\oplus 2}, \quad F_M = \begin{pmatrix} 1 &
    t^{-1} \\ 0 & 1 \end{pmatrix} \circ (F \oplus F)
  \]
  is not contained in the essential image of $\vectaut(k)$.
\end{remark}

\subsection{Some results on isocrystals}

In this subsection, we provide some results that allow us to control the
structure of a Novikov isocrystal. These results will later be used in the proof
of Theorem~\ref{Thm:Main}.

\begin{proposition} \label{Prop:NovIsocGeomPt}
  Let $k$ be an algebraically closed field. Then objects of the form
  $(k\dparen[t], c F)$ for some $c \in k^\times$ generate $\novisoc(k)$ under
  extensions.
\end{proposition}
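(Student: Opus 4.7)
The plan is to proceed by induction on the rank $n$ of $M$ as a module over the Hahn series field $K := k\dparen[t]$. The base case $n=0$ is vacuous. For the inductive step, it suffices to find a nonzero $F_M$-stable $K$-subspace of $M$ isomorphic to $(K, cF)$ for some $c \in k^\times$, since the quotient then has rank $n-1$ and is handled by induction. The goal is therefore to produce $v \in M \setminus \{0\}$ and $c \in k^\times$ with $F_M(v) = cv$.

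\emph{Step 1 (stable lattice).} I first construct a $k\dbrack[t]$-lattice $\Lambda \subseteq M$ satisfying $F_M(\Lambda) = \Lambda$, equivalently, a basis of $M$ in which $F_M$ is represented by a matrix $A \in GL_n(k\dbrack[t])$. This uses the divisibility of $\mathbb{R}$ by both $\lambda$ and $\lambda - 1$: by rescaling basis vectors $e_i \mapsto t^{a_i} e_i$ with appropriately chosen $a_i \in \mathbb{R}$, the matrix transforms as $A_{ij} \mapsto t^{-a_i + \lambda a_j} A_{ij}$, and one solves the resulting system of linear inequalities together with the determinant normalization $\sum_i a_i = -\nu(\det A)/(\lambda - 1)$, where $\nu$ denotes the Hahn series valuation. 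In general, diagonal rescaling may need to be combined with further changes of basis in $GL_n(k\dbrack[t])$.

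\emph{Step 2 (reduction modulo $\mathfrak{m}$).} Let $\mathfrak{m} \subseteq k\dbrack[t]$ be the maximal ideal. Since $F$ acts trivially on $k = k\dbrack[t]/\mathfrak{m}$, the reduction $\bar F_M$ on the $k$-vector space $\Lambda/\mathfrak{m}\Lambda \cong k^n$ is $k$-linear, and it is bijective since $F_M(\Lambda) = \Lambda$. By algebraic closedness of $k$, $\bar F_M$ admits an eigenvector $\bar v$ with eigenvalue $c \in k^\times$.

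\emph{Step 3 (lift).} Lift $\bar v$ to $v_0 \in \Lambda$, so that $F_M(v_0) = c v_0 + w$ with $w \in \mathfrak{m}\Lambda$. I seek $\delta \in \mathfrak{m}\Lambda$ satisfying $(c - F_M)\delta = w$, so that $v := v_0 + \delta$ is the desired eigenvector. Explicitly, take
\[
    \delta := \sum_{m \ge 0} c^{-m-1} F_M^m(w).
\]
The crucial point is that $F_M$ scales valuations on $\mathfrak{m}\Lambda$ by $\lambda > 1$: using $F_M(\Lambda) = \Lambda$ together with the fact that $F$ itself scales valuations by $\lambda$, we have $\nu(F_M^m(w)) \ge \lambda^m \nu(w)$, which tends to infinity. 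This guarantees that the sum is Novikov-admissible and lies in $\mathfrak{m}\Lambda$, and a direct computation confirms $(c - F_M)\delta = w$. The main obstacle is Step 1, the construction of the $F_M$-stable lattice; Steps 2 and 3 are formal consequences once such a $\Lambda$ is in hand.
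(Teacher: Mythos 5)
Your Steps 2 and 3 are fine as stated, but Step 1 --- the existence of a $k\dbrack[t]$-lattice $\Lambda \subseteq M$ with $F_M(\Lambda) = \Lambda$ --- is false in general, and this is a genuine gap. The paper itself records a counterexample in the remark following Proposition~\ref{Prop:VectToIsocFullyFaithful}: the Novikov isocrystal
\[
  M = k\dparen[t]^{\oplus 2}, \qquad F_M = \begin{pmatrix} 1 & t^{-1} \\ 0 & 1 \end{pmatrix} \circ (F \oplus F).
\]
If such a $\Lambda$ existed, a successive-approximation argument (using that $F$ multiplies $t$-adic valuations by $\lambda > 1$) would show that $(M, F_M) \cong (M_0\dparen[t], F_{M_0}\otimes F)$ for a finite-dimensional $k$-vector space $M_0$ with a linear automorphism $F_{M_0}$, contradicting the remark. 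One can also see it directly: the line $L = k\dparen[t]e_1$ is the unique rank-one $F_M$-stable subspace, so $F_M(\Lambda) = \Lambda$ forces $\Lambda \cap L = k\dbrack[t]e_1$ and $\Lambda/(\Lambda\cap L) = k\dbrack[t]\bar{e}_2$ (a lattice $t^a k\dbrack[t]$ is carried to $t^{\lambda a}k\dbrack[t]$, so $a=0$ on both ends), hence $\Lambda = k\dbrack[t]e_1 + k\dbrack[t](\gamma e_1 + e_2)$ for some $\gamma \in k\dparen[t]$; stability then requires $F(\gamma) - \gamma + t^{-1} \in k\dbrack[t]$, whose formal solution would need nonzero coefficients at exponents $-\lambda^{-m}$ for all $m\ge 1$, which accumulate at $0$ and violate the Novikov finiteness condition.

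Your observation that the determinant valuation can always be normalized to zero (because $F$ acts multiplicatively on valuations, so $\nu(\det)$ rescales by $\lambda - 1$ under diagonal conjugation) is a genuine feature of this setting, absent in the $p$-adic case; but it does not make the full system of inequalities $\lambda a_j - a_i + \nu(A_{ij}) \ge 0$ solvable, and the hedge about ``further changes of basis in $GL_n(k\dbrack[t])$'' cannot be completed, as the example shows. The paper sidesteps the lattice entirely: it picks a cyclic vector $m$ with minimal relation $F_M^n(m) = \sum_{i=0}^{n-1} a_i F_M^i(m)$, rescales $m \mapsto t^r m$ so that all $a_i$ lie in $k\dbrack[t]$ with at least one unit, chooses $c \in k^\times$ to be a root of the residual characteristic equation, and then runs a contraction in the single unknown $b$ to produce the eigenvector directly. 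That cyclic-vector rescaling is the correct substitute for your Step 1.
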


\begin{proof}
  We imitate the argument in the proof of the Dieudonn\'{e}--Manin
  classification \cite{Man63}. Because $k\dparen[t]$ is a field, it suffices
  to prove that for every nonzero Novikov isocrystal $(M, F_M)$, there exists a
  nonzero element $m \in M$ and $c \in k^\times$ satisfying $F_M(m) = cm$. Fix
  an arbitrary nonzero $m \in M$ and find $n \ge 1$ such that $m, F_M(m),
  \dotsc, F_M^{n-1}(m)$ are linearly independent over $k\dparen[t]$ while
  \[
    F_M^n(m) = a_0 m + a_1 F_M(m) + \dotsb + a_{n-1} F_M^{n-1}(m)
  \]
  for $a_0, \dotsc, a_{n-1} \in k\dparen[t]$. Replacing $m$ with $t^r m$ for a
  real number $r \in \mathbb{R}$ has the effect of replacing $a_i$ with $t^{r
  (\lambda^n - \lambda^i)}$. Thus by choosing $r$ appropriately, we may further
  assume that $a_1, \dotsc, a_n \in k\dbrack[t]$ with $a_i \in
  k\dbrack[t]^\times$ for some $0 \le i \le n-1$.

  Our goal is to find elements $b_0, \dotsc, b_{n-1} \in
  k\dparen[t]$, not all zero, and a scalar $c \in k^\times$
  satisfying the equation
  \[
    F_M(b_0 m + b_1 F_M(m) + \dotsb + b_{n-1} F_M^{n-1}(m))= cb_0 m + cb_1
    F_M(m) + \dotsb + cb_{n-1} F_M^{n-1}(m).
  \]
  This amounts to the system of equations
  \[
    F(b_{n-1}) a_0 = cb_0, \; F(b_0) + F(b_{n-1}) a_1 = cb_1, \; \dotsc, \;
    F(b_{n-2}) + F(b_{n-1}) a_{n-1} = cb_{n-1},
  \]
  which can be rewritten as a single equation
  \[
    F^n(b) F^{n-1}(a_0) + c F^{n-1}(b) F^{n-2}(a_1) + \dotsb + c^{n-1} F(b)
    a_{n-1} = c^n b
  \]
  in terms of $b = b_{n-1}$.

  We claim that this equation has a nonzero solution in $k\dbrack[t]$. For each
  $0 \le i \le n-1$, let $\bar{a}_i \in k$ be the constant term of $a_i \in
  k\dbrack[t]$. Since
  we are assuming that $\bar{a}_i \neq 0$ for some $1 \le i \le n-1$ and that
  $k$ is algebraically closed, there exists a $c \in k^\times$ for which
  \[
    \bar{a}_0 + c \bar{a}_1 + \dotsb + c^{n-1} \bar{a}_{n-1} = c^n.
  \]
  Write
  \[
    g(b) = c^{-n} F^n(b) F^{n-1}(a_0) + c^{-n+1} F^{n-1}(b) F^{n-2}(a_1) +
    \dotsb + c^{-1} F(b) a_{n-1}
  \]
  so that we need to find $b \in k\dbrack[t]$ with $g(b) = b$. Set
  $b^{(0)} = 1$ and $b^{(j+1)} = g(b^{(j)})$. By construction of $c$, the
  constant term of $b^{(1)} = g(b^{(0)})$ is $1$, and hence there exists a
  positive real number $\epsilon > 0$ for which $b^{(1)} - b^{(0)} \in t^\epsilon
  k\dbrack[t]$. On the other hand, we observe that $x - y \in t^d
  k\dbrack[t]$ implies $g(x) - g(y) \in t^{\lambda d} k\dbrack[t]$.
  This shows that $b^{(j+1)} - b^{(j)} \in t^{\lambda^j \epsilon} k\dbrack[t]$
  and therefore
  \[
    b = \lim_{i \to \infty} b^{(j)} \in 1 + t^\epsilon k\dbrack[t]
  \]
  exists and satisfies $g(b) = b$.
\end{proof}

\begin{lemma} \label{Lem:FrobeniusLimit}
  Let $A$ be a commutative ring, and let $M_0$ be an $A$-module. Consider the
  topological $A\dparen[t]$-module $M_0\dparen[t]$, where $M_0\dparen[t]$ has
  the $t$-adic topology. If $S \subseteq M_0\dparen[t]$ is a closed
  $A$-submodule that is stable under the map
  \[
    F \colon M_0\dparen[t] \to M_0\dparen[t]; \quad \sum_i m_i t^{d_i} \mapsto
    \sum_i m_i t^{\lambda d_i},
  \]
  and also stable under multiplication by $t^d$ for every $d \in \mathbb{R}$,
  then there exists an $A$-submodule $S_0 \subseteq M_0$ for which $S =
  S_0\dparen[t]$.
\end{lemma}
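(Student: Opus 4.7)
The plan is to take $S_0 := S \cap M_0$, where $M_0$ is identified with the subspace of $M_0\dparen[t]$ consisting of elements with support in $\{0\}$ (i.e.\ the ``constants''). This is plainly an $A$-submodule of $M_0$, and it remains to verify that $S = S_0\dparen[t]$.

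The inclusion $S_0\dparen[t] \subseteq S$ is straightforward. Any $\sum_i m_i t^{d_i} \in S_0\dparen[t]$ is the $t$-adic limit of its truncations $\sum_{d_i < N} m_i t^{d_i}$, and each truncation lies in $S$ because $m_i \in S_0 \subseteq S$, $S$ is stable under multiplication by $t^{d_i}$, and $S$ is an $A$-submodule. Closedness of $S$ then yields the limit.

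The reverse inclusion $S \subseteq S_0\dparen[t]$ is the substantive step, and is where the hypotheses on $F$ and on $t$-adic closedness actually combine. Given $s \in S$, enumerate its support as $d_1 < d_2 < \cdots$ (which, by the Novikov finiteness condition, is either finite or tends to $+\infty$), so that $s = \sum_i m_i t^{d_i}$. I would argue by induction on $i$ that $m_i \in S_0$. For the base case, $t^{-d_1} s = m_1 + \sum_{i \geq 2} m_i t^{d_i - d_1}$ lies in $S$ by $t^d$-stability, and hence so does
\[
  F^k(t^{-d_1} s) = m_1 + \sum_{i \geq 2} m_i t^{\lambda^k (d_i - d_1)}.
\]
The tail lies in $t^{\lambda^k (d_2 - d_1)} M_0\dbrack[t]$, which shrinks to $0$ in the $t$-adic topology as $k \to \infty$ since $\lambda > 1$, so $m_1 = \lim_k F^k(t^{-d_1} s)$ lies in $S \cap M_0 = S_0$ by closedness of $S$. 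Then $s - m_1 t^{d_1} \in S$ has support starting at $d_2$, and iterating the argument produces $m_i \in S_0$ for every $i$, whence $s \in S_0\dparen[t]$.

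The main obstacle is precisely this leading-coefficient extraction: $t^d$-stability translates the leading exponent to zero, iterated Frobenius dilates the remaining positive exponents out towards $+\infty$, and $t$-adic closedness lets us pass to the limit to land back in $M_0$. Once this is set up, the rest is bookkeeping.
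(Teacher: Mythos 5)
Your proposal is correct and takes essentially the same approach as the paper: the key leading-coefficient extraction $m_1 = \lim_k F^k(t^{-d_1}s)$ is exactly the argument given there, with the iteration over successive coefficients. The only cosmetic difference is that you define $S_0 = S \cap M_0$ up front (and verify both inclusions), while the paper defines $S_0$ as the set of coefficients appearing in elements of $S$ and then deduces $S_0 \subseteq S$ from the same limit argument — the two definitions coincide.
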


\begin{proof}
  Choose any element $s \in S$ and write
  \[
    s = \sum_{i \ge 1} m_i t^{d_i} \in M_0\dparen[t], \quad m_i \in M_0, \quad
    d_1 < d_2 < \dotsb.
  \]
  Since $S$ is closed and stable under $F$, we have
  \[
    m_1 = \lim_{k \to \infty} F^k(t^{-d_1} s) \in S.
  \]
  Then $\sum_{i \ge 2} m_i t^{d_i} = s - m_1 t^{d_1} \in S$ and similarly $m_2
  \in S$. Iterating the process shows that $m_i \in S$ for all $i$, and
  therefore all coefficients that appear in the power series expansion of
  elements of $S$ are in $S$. Let $S_0 \subseteq M_0$ be the subset of all such
  coefficients that appear in elements of $S$, so that $S \subseteq
  S_0\dparen[t]$. The above analysis shows that $S_0 \subseteq S$,
  and by writing each element of $S_0\dparen[t]$ as a convergent sum,
  we moreover obtain $S = S_0\dparen[t]$.
\end{proof}

\begin{proposition} \label{Prop:NovIsocInjective}
  Let $A \hookrightarrow B$ be an injective ring homomorphism, and let $(M,
  F_M) \in \novisoc(A)$ be a Novikov isocrystal with the property that its
  base change to $B$ lies in the essential image of $\vect(B) \to \novisoc(B)$.
  Then $(M, F_M)$ lies in the essential image of $\vect(A) \to \novisoc(A)$.
\end{proposition}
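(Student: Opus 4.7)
Write $(M_B, F_{M_B})$ for the base change of $(M, F_M)$ to $B$; by hypothesis there is an isomorphism $(M_B, F_{M_B}) \cong (N_0\dparen[t], \id_{N_0} \otimes F)$ for some finite projective $B$-module $N_0$. My plan is to use this isomorphism to view $M$ as an $A\dparen[t]$-submodule of $N_0\dparen[t]$, show that $M_0 := M^{F_M = 1}$ is a finite projective $A$-module, and produce a canonical isomorphism $(M, F_M) \cong (M_0\dparen[t], \id_{M_0} \otimes F)$. The inclusion $M \hookrightarrow N_0\dparen[t]$ is injective since $M$ is a direct summand of a free $A\dparen[t]$-module and $A\dparen[t] \hookrightarrow B\dparen[t]$ is injective. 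The $F$-fixed points of $N_0\dparen[t]$ are exactly $N_0$, by the support argument already used in the proof of Proposition~\ref{Prop:VectToIsocFullyFaithful}, so $M_0$ coincides with $M \cap N_0$ inside $N_0\dparen[t]$.

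The key step is to apply Lemma~\ref{Lem:FrobeniusLimit} to $M$ itself, viewed as an $A$-submodule of $N_0\dparen[t]$. The module $M$ is $F$-stable by construction and stable under multiplication by $t^d$ for every $d \in \mathbb{R}$ because it is an $A\dparen[t]$-module; the remaining hypothesis, and the main technical obstacle, is to show that $M$ is $t$-adically closed in $N_0\dparen[t]$. To verify this, I choose a splitting $M \oplus M' \cong A\dparen[t]^n$, which tensors up over $A\dparen[t]$ to a splitting $N_0\dparen[t] \oplus (M' \otimes_{A\dparen[t]} B\dparen[t]) \cong B\dparen[t]^n$. The coefficient-wise inclusion $A\dparen[t] \hookrightarrow B\dparen[t]$ is a closed topological embedding: a $t$-adic Cauchy sequence in $A\dparen[t]$ has each coefficient eventually constant, so its limit in $B\dparen[t]$ has all coefficients in $A$. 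Combined with the direct-summand inclusions $M \hookrightarrow A\dparen[t]^n$ and $N_0\dparen[t] \hookrightarrow B\dparen[t]^n$, this identifies the canonical topology on $M$ with the subspace topology inherited from $N_0\dparen[t]$, and in particular shows $M$ is closed.

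With closedness in hand, Lemma~\ref{Lem:FrobeniusLimit} forces $M = M_0\dparen[t]$ as subsets of $N_0\dparen[t]$, under which $F_M$ takes the form $\id_{M_0} \otimes F$. The same topological comparison identifies the canonical and $t$-adic topologies on $M = M_0\dparen[t]$, so Lemma~\ref{Lem:FiniteProjTopology} applies and yields that $M_0$ is finite projective over $A$. This realizes $(M, F_M)$ as the image of $M_0 \in \vect(A)$ under the functor $\vect(A) \to \novisoc(A)$, completing the plan.
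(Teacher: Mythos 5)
Your proposal is correct and follows essentially the same route as the paper: embed $M$ into $N_0\dparen[t]$ via the base-change isomorphism, use the closed embedding $A\dparen[t]\hookrightarrow B\dparen[t]$ and the direct-summand splitting to show $M$ is closed with the right topology, then apply Lemma~\ref{Lem:FrobeniusLimit} followed by Lemma~\ref{Lem:FiniteProjTopology}. The only cosmetic difference is that you define $M_0 = M^{F_M=1}$ up front and identify it with the submodule produced by Lemma~\ref{Lem:FrobeniusLimit}, whereas the paper simply takes $M_0$ to be the output of that lemma; you also spell out the Cauchy-sequence verification of closedness that the paper leaves implicit.
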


\begin{proof}
  By assumption, there exists a finite projective $B$-module $N_0$ together with
  a $B\dparen[t]$-linear isomorphism
  \[
    M \otimes_{A\dparen[t]} B\dparen[t] \cong N_0
    \otimes_B B\dparen[t]
  \]
  such that the action of $F_M \otimes F$ on the left hand side agrees with the
  action of $\id_{N_0} \otimes F$ on the right hand side.

  Since $M$ is flat over $A\dparen[t]$ and $A\dparen[t]
  \to B\dparen[t]$ is injective, we
  may use the composition
  \[
    M \hookrightarrow M \otimes_{A\dparen[t]} B\dparen[t]
    \cong N_0 \otimes_B B\dparen[t] = N_0\dparen[t]
  \]
  to regard $M$ as an $A\dparen[t]$-submodule of $N_0\dparen[t]$ that is stable
  under the action of $\id_{N_0} \otimes F$. Note that $A\dparen[t]
  \hookrightarrow B\dparen[t]$ is a closed subring with the subspace topology.
  By writing $M$ as a direct summand of a finite free module, we see that $M$ is
  also closed in $N_0\dparen[t]$, and moreover the canonical topology on $M$
  agrees with the subspace topology from $N_0\dparen[t]$.

  Using Lemma~\ref{Lem:FrobeniusLimit}, we see that there exists an
  $A$-submodule $M_0 \subseteq N_0$ for which $M = M_0\dparen[t]$. On the other
  hand, we observe from above that the canonical topology on $M$ agrees with the
  $t$-adic topology. It therefore follows from
  Lemma~\ref{Lem:FiniteProjTopology} that $M_0$ is a finite projective
  $A$-module.
\end{proof}

\begin{proposition} \label{Prop:LatticeIsocrystal}
  Let $A$ be a commutative ring, and let $(M, F_M) \in \novisoc(A)$ be a Novikov
  isocrystal. Consider a finitely generated $A\dbrack[t]$-submodule $M^\circ
  \subseteq M$ with the property that $M^\circ[t^{-1}] = M$. Assume that for
  each $m^\circ \in M^\circ$ there exists a real number $\epsilon > 0$ for which
  \[
    F_M(m^\circ) - m^\circ \in t^\epsilon M^\circ.
  \]
  Then $(M, F_M)$ is in the essential image of $\vect(A) \to \novisoc(A)$.
\end{proposition}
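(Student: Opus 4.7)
The plan is to construct an $A$-submodule $M_0 \subseteq M$ consisting of $F_M$-fixed elements, establish a Frobenius-equivariant isomorphism $M \cong M_0\dparen[t]$, and then invoke Lemma~\ref{Lem:FiniteProjTopology} to conclude that $M_0$ is finite projective over $A$. Note first that the hypothesis forces $F_M(M^\circ) \subseteq M^\circ$, since $F_M(m) = m + (F_M(m) - m) \in M^\circ + t^\epsilon M^\circ$. I would fix finitely many $A\dbrack[t]$-module generators $m_1, \dotsc, m_n$ of $M^\circ$ and a single $\epsilon > 0$ with $\delta_i := F_M(m_i) - m_i \in t^\epsilon M^\circ$ for every $i$.

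The core construction produces $F_M$-fixed lifts
\[
  e_i := m_i + \sum_{k \ge 0} F_M^k(\delta_i),
\]
interpreted as a $t$-adically convergent series in $M^\circ$: since $F_M$ sends $t^d M^\circ$ into $t^{\lambda d} M^\circ$, the $k$-th term lies in $t^{\lambda^k \epsilon} M^\circ$. A telescoping computation using $F_M(m_i) = m_i + \delta_i$ shows $F_M(e_i) = e_i$. Because $e_i \equiv m_i \pmod{t^\epsilon M^\circ}$, the matrix $E \in \operatorname{Mat}_n(A\dbrack[t])$ expressing the $e_i$ in terms of the $m_j$ has the form $I + B$ with $B$'s entries in $t^\epsilon A\dbrack[t]$, and is therefore invertible via the geometric series $\sum_{k \ge 0} (-B)^k$. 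Consequently, the $e_i$ generate $M^\circ$ as an $A\dbrack[t]$-module, and thus generate $M = M^\circ[t^{-1}]$ as an $A\dparen[t]$-module.

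Next, consider the surjection $\phi \colon A\dparen[t]^n \twoheadrightarrow M$ sending the $i$-th basis vector to $e_i$. Since $F_M(e_i) = e_i$, the map $\phi$ intertwines the coordinate-wise Frobenius on $A^n\dparen[t] \cong A\dparen[t]^n$ with $F_M$. Its kernel $K$ is an $A\dparen[t]$-submodule, hence stable under every $t^d$; it is Frobenius-stable by equivariance; and it is closed in $A\dparen[t]^n$ because $\phi$ is continuous by Lemma~\ref{Lem:CanonicalTopology}(4) and $M$ is Hausdorff. Applying Lemma~\ref{Lem:FrobeniusLimit} with ambient module $A^n\dparen[t]$ then produces an $A$-submodule $K_0 \subseteq A^n$ with $K = K_0\dparen[t]$. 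Setting $M_0 := A^n/K_0$ and using that the functor $(-)\dparen[t]$ preserves finite colimits yields a Frobenius-equivariant isomorphism $M \cong M_0\dparen[t]$ under which $F_M$ corresponds to $\id_{M_0} \otimes F$. Splitting $\phi$ by the projectivity of $M$ and applying Lemma~\ref{Lem:CanonicalTopology}(3) identifies the canonical topology on $M$ with the $t$-adic topology on $M_0\dparen[t]$, so Lemma~\ref{Lem:FiniteProjTopology} gives that $M_0$ is finite projective over $A$, completing the proof.

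The main technical subtlety is justifying that the series defining $e_i$ converges in $M^\circ$. This requires $M^\circ$ to be $t$-adically complete with a basis of neighborhoods of $0$ given by the submodules $t^d M^\circ$ for $d > 0$, which should follow from $M^\circ$ being a finitely generated module over the complete Tate ring $A\dparen[t]$ with ring of definition $A\dbrack[t]$; however, the verification needs some care because $A\dbrack[t]$ is generally non-Noetherian, so standard completeness results for finitely generated modules must be revisited in this setting.
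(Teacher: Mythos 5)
Your proof follows essentially the same outline as the paper's, with a small organizational variation: where you construct the $F_M$-fixed lifts explicitly as $e_i = m_i + \sum_{k \ge 0} F_M^k(\delta_i)$ and then invert a matrix $I + B$ with $B \equiv 0 \pmod{t^\epsilon}$, the paper abstractly defines $M_0 = \lbrace m^\circ \in M^\circ : F_M(m^\circ) = m^\circ \rbrace$, proves that $M_0 \to M^\circ/(t^{>0})M^\circ$ is an isomorphism (surjectivity via $\tilde m^\circ := \lim_k F_M^k(m^\circ)$, which is precisely your $e_i$ when $m^\circ = m_i$), and then applies Nakayama. These are the same construction seen from two angles. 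Your final step also differs cosmetically: you apply Lemma~\ref{Lem:FiniteProjTopology} directly to $M_0 = A^n/K_0$, while the paper first applies it to $K_0$ and then invokes \cite[Theorem~08XD]{Sta24} for $A^n/K_0$. Both work; your route is marginally shorter once one has checked that the $t$-adic topology on $(A^n/K_0)\dparen[t]$ is the quotient topology from $A\dparen[t]^n$ (which follows since $(-)\dbrack[t]$ preserves cokernels, so $(A^n/K_0)\dbrack[t] = A\dbrack[t]^n/K_0\dbrack[t]$) and that this agrees with the canonical topology via the splitting and Lemma~\ref{Lem:CanonicalTopology}(3).

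The subtlety you flag at the end --- convergence of $\sum_k F_M^k(\delta_i)$ inside $M^\circ$ --- is real, but the worry about non-Noetherianness of $A\dbrack[t]$ is a red herring, and the paper resolves it more directly. The key observation, which is the paper's very first step, is that the submodules $t^d M^\circ$ form a neighborhood basis of $0$ in $M$ for the \emph{canonical} topology: choosing generators of $M^\circ$ gives a surjection $f \colon A\dparen[t]^{\oplus n} \twoheadrightarrow M$ sending $A\dbrack[t]^{\oplus n}$ onto $M^\circ$; projectivity of $M$ splits $f$, and by Lemma~\ref{Lem:CanonicalTopology} a split surjection of finite projective modules is an open map, so $t^d M^\circ = f(t^d A\dbrack[t]^{\oplus n})$ is open, and conversely any open $U \ni 0$ pulls back to contain some $t^d A\dbrack[t]^{\oplus n}$, whence $U \supseteq t^d M^\circ$. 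In particular $M^\circ$ itself is open, hence closed (as an open subgroup of the topological group $M$). Since $M$ is complete (being a direct summand of $A\dparen[t]^{\oplus n}$ with the induced topology), the Cauchy partial sums of your series converge in $M$, and closedness of $M^\circ$ forces the limit to lie in $M^\circ$. No separate completeness statement for finitely generated $A\dbrack[t]$-modules is needed. With this point patched in, your argument is complete and correct.
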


\begin{proof}
  Consider the canonical topology on $M$. We first claim that $t^d M^\circ$ for
  $d \in \mathbb{R}$ form a neighborhood basis of $0 \in M$. To see this, we
  choose the finite set of generators of $M^\circ$ so that
  \[
    f \colon A\dparen[t]^{\oplus n} \twoheadrightarrow M
  \]
  is surjective with the image of $A\dbrack[t]^{\oplus n}$ being $M^\circ$.
  Since $M$ is projective, it split and hence $f$ is open by
  Lemma~\ref{Lem:CanonicalTopology}. This immediately implies that
  $M^\circ$ is open, and hence $t^d M^\circ$ are also open. On the other hand,
  for every open neighborhood $0 \in U$ in $M$, its preimage $f^{-1}(U)
  \subseteq A\dparen[t]^{\oplus n}$ is open and hence contains some $t^d
  A\dbrack[t]^{\oplus n}$. It follows that $t^d M^\circ$ is contained in $U$.

  Note that there is an ideal $(t^{>0}) \subseteq A\dbrack[t]$ generated by the
  elements $t^\epsilon$ for all real numbers $\epsilon \gt 0$. The quotient ring
  $A\dbrack[t] / (t^{>0})$ is naturally identified with $A$. Consider the
  $A$-submodule
  \[
    M_0 = \lbrace m^\circ \in M^\circ : F_M(m^\circ) = m^\circ \rbrace,
  \]
  of $M^\circ$. We now show that the composition
  \[
    M_0 \hookrightarrow M^\circ \twoheadrightarrow M^\circ / (t^{>0}) M^\circ
  \]
  is an isomorphism of $A$-modules. For $m^\circ, n^\circ \in M^\circ$, if
  $m^\circ - n^\circ \in t^d M^\circ$ then $F_M(m^\circ) - F_M(n^\circ) \in
  t^{\lambda d} M^\circ$, because $M^\circ$ is stable under $F_M$. This implies
  that for every $m^\circ \in M^\circ$, the limit
  \[
    \tilde{m}^\circ = \lim_{k \to \infty} F_M^k(m^\circ)
  \]
  exists, because if $F_M(m^\circ) - m^\circ \in t^\epsilon M^\circ$ then
  $F_M^{k+1}(m^\circ) - F_M^k(m^\circ) \in t^{\lambda^k \epsilon} M^\circ$.
  Since $\tilde{m}^\circ \in M_0$ and $\tilde{m}^\circ - m^\circ \in (t^{>0})
  M^\circ$, we conclude that the composition $M_0 \to M^\circ / (t^{>0})
  M^\circ$ is surjective. Similarly, if $n_0, m_0 \in M_0$ satisfy $n_0 - m_0
  \in (t^{>0}) M^\circ$, then by iteratively applying $F_M$ we see that $n_0 -
  m_0 \in t^d M^\circ$ for all $d \in \mathbb{R}$, and hence $n_0 = m_0$.

  The isomorphism $M_0 \cong M^\circ / (t^{>0}) M^\circ$ in particular implies
  that $M_0$ is finitely generated as an $A$-module. Choose a finite set of
  generators $m_1, \dotsc, m_n \in M_0$. Because any element in $1 + (t^{>0})
  \subseteq A\dbrack[t]$ is invertible, Nakayama's lemma implies that $m_1,
  \dotsc, m_n$ generate $M^\circ$ as an $A\dbrack[t]$-module. It follows that
  \[
    A\dparen[t]^{\oplus n} \xrightarrow{(m_1, \dotsc, m_n)} M
  \]
  is surjective. We can further promote it to a surjective map of Novikov
  isocrystals
  \[
    (A\dparen[t]^{\oplus n}, F^{\oplus n}) \xrightarrow{(m_1, \dotsc, m_n)} (M,
    F_M).
  \]

  We now consider the kernel
  \[
    K = \ker(A\dparen[t]^{\oplus n} \to M).
  \]
  This is a closed $A\dparen[t]$-submodule of $(A^{\oplus n})\dparen[t]$ that is
  stable under $F^{\oplus n}$. Lemma~\ref{Lem:FrobeniusLimit} implies that there
  exists an $A$-submodule $K_0 \subseteq A^{\oplus n}$ for which $K =
  K_0\dparen[t]$. On the other hand, $K$ has the subspace topology from
  $A\dparen[t]^{\oplus n}$, and therefore the $t$-adic topology on
  $K_0\dparen[t]$ agrees with the canonical topology. Hence
  Lemma~\ref{Lem:FiniteProjTopology} shows that $K_0$ is a finite projective
  $A$-module. At this point, we have
  \begin{align*}
    (M, F_M) &\cong (A\dparen[t]^{\oplus n}, F^{\oplus n}) /
    (K_0\dparen[t]^{\oplus n}, \id_{K_0} \otimes F) \\ &= ((A^{\oplus
    n}/K_0)\dparen[t], \id_{A^{\oplus n}/K_0} \otimes F).
  \end{align*}
  Finally, as in the proof of Lemma~\ref{Lem:FiniteProjTopology}, $A^{\oplus
  n}/K_0$ being finitely presented over $A$ with $(A^{\oplus n}/K_0)\dparen[t]$
  being finite projective over $A\dparen[t]$ implies that $A^{\oplus n}/K_0$ is
  finite projective over $A$, by \cite[Theorem~08XD]{Sta24}.
\end{proof}

\section{Proof of the main theorem} \label{Sec:Proof}

\subsection{Isocrystal from a descent datum} \label{Subsec:DDToIsoc}

In this subsection, we construct a functor $\novdd(A) \to \novisoc(A)$. First
note that for every finite projective module $M$ over $A\dparen[t]$, the diagram
\[ \begin{tikzcd}
  M \arrow{r}{\pi_2^\ast} & \pi_2^\ast M \arrow[shift left]{r}{\pi_{13}^\ast}
  \arrow[shift right]{r}[']{\pi_{23}^\ast} & \rho_3^\ast M
\end{tikzcd} \]
is an equalizer. Indeed, this can be checked for $M = A\dparen[t]$ by
flatness of $M$, in which case it is clear.

We define the $A$-algebra automorphisms
\begin{align*}
  F_2 &\colon A\dparen[t][u] \to A\dparen[t][u]; & t &\mapsto t, u \mapsto
  u^\lambda, \\ F_3 &\colon A\dparen[t][u][v] \to A\dparen[t][u][v]; & t
  &\mapsto t, u \mapsto u, v \mapsto v^\lambda.
\end{align*}
For each $(M, \varphi) \in \mathsf{NovDD}(A)$, we define a $F_2$-semilinear
bijection
\[
  F_{M,2} \colon \pi_2^\ast M \xrightarrow{\varphi^{-1}} \pi_1^\ast M
  \xrightarrow{\id_M \otimes F_2} \pi_1^\ast M \xrightarrow{\varphi} \pi_2^\ast
  M.
\]
Similarly, we can define a $F_3$-semilinear bijection
\[
  F_{M,3} \colon \rho_3^\ast M \xrightarrow{\pi_{13}^\ast \varphi^{-1}}
  \rho_1^\ast M \xrightarrow{\id_M \otimes F_3} \rho_1^\ast M
  \xrightarrow{\pi_{13}^\ast \varphi} \rho_3^\ast M,
\]
which agrees with the other possible definition
\[
  F_{M,3} \colon \rho_3^\ast M \xrightarrow{\pi_{23}^\ast \varphi^{-1}}
  \rho_2^\ast M \xrightarrow{\id_M \otimes F_3} \rho_2^\ast M
  \xrightarrow{\pi_{23}^\ast \varphi} \rho_3^\ast M,
\]
thanks to the cocycle condition. They fit in a commutative diagram
\[ \begin{tikzcd}
  M \arrow{r}{\pi_2^\ast} \arrow[dashed]{d}{F_M} & \pi_2^\ast M \arrow[shift
  left]{r}{\pi_{13}^\ast} \arrow[shift right]{r}[']{\pi_{23}^\ast}
  \arrow{d}{F_{M,2}} & \rho_3^\ast M \arrow{d}{F_{M,3}} \\ M
  \arrow{r}{\pi_2^\ast} & \pi_2^\ast M \arrow[shift left]{r}{\pi_{13}^\ast}
  \arrow[shift right]{r}[']{\pi_{23}^\ast} & \rho_3^\ast M.
\end{tikzcd} \]
Both rows are equalizers, and all horizontal maps are $A\dparen[t]$-linear while
the vertical maps are $F$-semilinear, if we use the ring homomorphisms
$\pi_2^\ast \colon A\dparen[t] \to A\dparen[t][u]$ and $\rho_3^\ast \colon
A\dparen[t] \to A\dparen[t][u][v]$ to give every module an $A\dparen[t]$-module
structure. Therefore we obtain an $F$-semilinear bijection $F_M \colon M \to M$,
and this defines the functor
\[
  \novdd(A) \to \novisoc(A).
\]

\begin{lemma} \label{Lem:RecoverDDFromIsoc}
  Let $A$ be any ring, and assume that the descent datum $(M, \varphi) \in
  \novdd(A)$ induces $F_M \colon M \to M$. Then the diagram
  \[ \begin{tikzcd}[column sep=large]
    M \arrow{r}{\varphi \circ \pi_1^\ast} \arrow{d}{\varphi \circ \pi_1^\ast} &
    \pi_2^\ast M \arrow[shift left]{r}{\id} \arrow[shift right]{r}[']{F_M
    \otimes F_2} \arrow{d}{\pi_{13}^\ast} & \pi_2^\ast M
    \arrow{d}{\pi_{13}^\ast} \\ \pi_2^\ast M \arrow{r}{\pi_{23}^\ast \varphi
    \circ \pi_{12}^\ast} & \rho_3^\ast M \arrow[shift left]{r}{\id} \arrow[shift
    right]{r}[']{F_M \otimes F_3} & \rho_3^\ast M
  \end{tikzcd} \]
  commutes, where both rows are equalizers.
\end{lemma}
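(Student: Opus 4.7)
The plan is a direct diagram chase, split into verifying the equalizer property of each row and the commutativity of each square. A preliminary observation is that by the defining property of $F_M$ (the dashed map in the diagram preceding the lemma), the base change $F_M \otimes F_2$ agrees with $F_{M,2} = \varphi \circ (\id_M \otimes F_2) \circ \varphi^{-1}$, since both are $F_2$-semilinear maps on $\pi_2^\ast M$ that agree on $\pi_2^\ast(M)$; likewise $F_M \otimes F_3 = F_{M,3}$.

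For the rows, I would transport each through a natural isomorphism. Applying $\varphi^{-1}$ to the right of the top row converts it into
\[
  M \xrightarrow{\pi_1^\ast} \pi_1^\ast M \rightrightarrows \pi_1^\ast M
\]
with the two parallel arrows being $\id$ and $\id_M \otimes F_2$. Similarly, using the alternative form of $F_{M,3}$ via $\pi_{23}^\ast \varphi$ and applying $\pi_{23}^\ast(\varphi^{-1})$ to the right of the bottom row converts it into $\pi_2^\ast M \xrightarrow{\pi_{12}^\ast} \rho_2^\ast M \rightrightarrows \rho_2^\ast M$ with maps $\id$ and $\id_M \otimes F_3$. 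In both cases, being an equalizer reduces to the claim that the fixed locus of $\id \otimes F_?$ on the base change equals the image of the natural inclusion. Writing the module as a direct summand of a finite free module reduces this further to the ring-level statements $A\dparen[t][u]^{F_2} = A\dparen[t]$ and $A\dparen[t][u][v]^{F_3} = A\dparen[t][u]$. These are straightforward support computations: if $F_2(f) = f$, then $\supp(f) \subseteq \mathbb{R}^2$ is stable under $(d_1, d_2) \mapsto (d_1, \lambda d_2)$, and since $\lambda \gt 1$ while $\supp(f)$ intersects any half-plane $\lbrace s_1 d_1 + s_2 d_2 \lt C \rbrace$ (for positive $s_i$) in a finite set, every element of $\supp(f)$ must satisfy $d_2 = 0$; the $F_3$ case is analogous.

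The middle square (with both horizontal arrows equal to $\id$) commutes trivially. The left square, unfolded via functoriality of base change, amounts to $\pi_{13}^\ast \varphi \circ \rho_1^\ast = \pi_{23}^\ast \varphi \circ \pi_{12}^\ast \varphi \circ \rho_1^\ast$, which is precisely the cocycle condition. For the right square, asserting $\pi_{13}^\ast \circ F_{M,2} = F_{M,3} \circ \pi_{13}^\ast$, I would instead use the formula $F_{M,3} = \pi_{13}^\ast \varphi \circ (\id \otimes F_3) \circ \pi_{13}^\ast \varphi^{-1}$. Substituting both definitions and applying functoriality $\pi_{13}^\ast \circ \varphi = \pi_{13}^\ast \varphi \circ \pi_{13}^\ast$, the verification reduces to the identity $\pi_{13}^\ast \circ (\id \otimes F_2) = (\id \otimes F_3) \circ \pi_{13}^\ast$ on $\pi_1^\ast M$, which in turn follows from the ring-level identity $\pi_{13}^\ast \circ F_2 = F_3 \circ \pi_{13}^\ast$ (both sides send $t \mapsto t$ and $u \mapsto v^\lambda$).

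The main obstacle is purely organizational: several semilinear operators and their base changes interact, and one must be careful not to conflate them. A helpful observation that streamlines the argument is that the two equivalent formulas for $F_{M,3}$ (via $\pi_{13}^\ast \varphi$ and via $\pi_{23}^\ast \varphi$) are each tailored to one of the two nontrivial squares, so matching each square to the corresponding formula confines the sole appeal to the cocycle condition to the left square.
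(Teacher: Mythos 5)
Your proof is correct and follows essentially the same approach as the paper. The paper likewise uses the semilinearity observation to identify $F_M \otimes F_2$ with $F_{M,2}$, transports the top row through $\varphi^{-1}$ to reduce it to $M \xrightarrow{\pi_1^\ast} \pi_1^\ast M \rightrightarrows \pi_1^\ast M$, and then invokes finite projectivity to reduce to the ring-level equalizer statement for $A\dparen[t]$, treating the right squares and the bottom row as routine; you simply spell out the support computation for the ring-level claim and the unwinding of the right square, which the paper leaves implicit.
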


\begin{proof}
  The commutation of the left square is the cocycle condition, and the
  commutation of the two right squares is clear. We now verify that the first
  row is an equalizer. Note that because $F_{M,2} \colon \pi_2^\ast M \to
  \pi_2^\ast M$ is $F_2$-semilinear and its restriction along $\pi_2^\ast \colon
  M \hookrightarrow \pi_2^\ast M$ is $F_M$, we have $F_{M,2} = F_M \otimes F_2$.
  Using the definition of $F_{M,2}$, we see that the first row being an
  equalizer is equivalent to
  \[ \begin{tikzcd}
    M \arrow{r}{\pi_1^\ast} & \pi_1^\ast M \arrow[shift left]{r}{\id}
    \arrow[shift right]{r}[']{\id_M \otimes F_2} & \pi_1^\ast M
  \end{tikzcd} \]
  being an equalizer. Since all maps are $A\dparen[t]$-linear if we
  use $\pi_1^\ast$ to give $\pi_1^\ast M$ the structure of an
  $A\dparen[t]$-module, we may use finite projectivity of $M$ to reduce to
  \[ \begin{tikzcd}
    A\dparen[t] \arrow{r}{\pi_1^\ast} & A\dparen[t][u] \arrow[shift
    left]{r}{\id} \arrow[shift right]{r}[']{F_2} & A\dparen[t][u]
  \end{tikzcd} \]
  being an equalizer, which is now clear. The second row can similarly be
  seen to be an equalizer.
\end{proof}

\begin{proposition} \label{Prop:NovToIsocFullyFaithful}
  For every ring $A$, the functor $\novdd(A) \to \novisoc(A)$ is fully
  faithful.
\end{proposition}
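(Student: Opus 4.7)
Faithfulness is immediate since both $\novdd(A)$-morphisms and $\novisoc(A)$-morphisms are $A\dparen[t]$-linear maps $M \to N$ satisfying extra conditions, and the functor preserves the underlying linear map. For fullness, the plan is to start with an isocrystal morphism $f \colon (M, F_M) \to (N, F_N)$ and deduce the descent-datum compatibility $\pi_2^\ast f \circ \varphi_M = \varphi_N \circ \pi_1^\ast f$ as $A\dparen[t][u]$-linear maps.

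The first step is to produce, via Lemma~\ref{Lem:RecoverDDFromIsoc}, a correction $\delta \colon M \to N$ measuring the failure of the compatibility on $M$. Both $\pi_2^\ast f \circ \varphi_M \circ \pi_1^\ast$ and $\varphi_N \circ \pi_1^\ast f$, regarded as maps $M \to \pi_2^\ast N$, take values in the $F_{N,2}$-fixed subset of $\pi_2^\ast N$: for $\varphi_N \pi_1^\ast f$ this uses only $F_{N,2} \varphi_N = \varphi_N \circ (\id_N \otimes F_2)$, while for $\pi_2^\ast f \varphi_M \pi_1^\ast$ it uses (i) that $\pi_2^\ast f$ intertwines $F_{M,2}$ and $F_{N,2}$, a formal consequence of $f F_M = F_N f$, and (ii) the identity $F_{M,2} \varphi_M = \varphi_M \circ (\id_M \otimes F_2)$ built into the definition of $F_{M,2}$. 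Lemma~\ref{Lem:RecoverDDFromIsoc} identifies these fixed points with $\varphi_N \pi_1^\ast(N)$, so the difference factors as $\varphi_N \pi_1^\ast \delta$ for a unique $A\dparen[t]$-linear $\delta$. Extending by $A\dparen[t][u]$-linearity gives
\[
  \pi_2^\ast f \circ \varphi_M = \varphi_N \circ \pi_1^\ast g, \qquad g := f + \delta,
\]
on all of $\pi_1^\ast M$. It remains to prove $\delta = 0$.

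To do this, I would apply the base-change functors $\pi_{12}^\ast$, $\pi_{13}^\ast$, and $\pi_{23}^\ast$ to the identity above. Substituting the descent-datum cocycle $\pi_{13}^\ast \varphi_M = \pi_{23}^\ast \varphi_M \circ \pi_{12}^\ast \varphi_M$ into the $\pi_{13}^\ast$-pullback, then using the $\pi_{23}^\ast$-pullback to rewrite $\rho_3^\ast f \circ \pi_{23}^\ast \varphi_M$ as $\pi_{23}^\ast \varphi_N \circ \rho_2^\ast g$, then invoking the cocycle for $\varphi_N$ and cancelling the isomorphism $\pi_{23}^\ast \varphi_N$, yields
\[
  \rho_2^\ast g \circ \pi_{12}^\ast \varphi_M = \pi_{12}^\ast \varphi_N \circ \rho_1^\ast g.
\]
On the other hand, the $\pi_{12}^\ast$-pullback of the starting identity directly gives $\rho_2^\ast f \circ \pi_{12}^\ast \varphi_M = \pi_{12}^\ast \varphi_N \circ \rho_1^\ast g$. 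Equating the right-hand sides and cancelling the isomorphism $\pi_{12}^\ast \varphi_M$ yields $\rho_2^\ast f = \rho_2^\ast g$. Since $A\dparen[t] \hookrightarrow A\dparen[t][u][v]$ is split as $A\dparen[t]$-modules, with an explicit $A\dparen[t]$-linear retraction given by keeping only terms with zero $t$- and $v$-exponents and then renaming $u$ to $t$, the base-change functor $\rho_2^\ast$ is faithful, forcing $f = g$ and hence $\delta = 0$.

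The main obstacle will be the cocycle bookkeeping: one must carefully track which ring homomorphism each map is base-changed along, using the naturalities among the compositions $\pi_{ij}^\ast \pi_k^\ast$ and $\rho_\ell^\ast$, and identify the three $\pi_{ij}^\ast$-pullbacks of the starting identity correctly. Once this is managed, the rest of the argument is formal, relying only on Lemma~\ref{Lem:RecoverDDFromIsoc} and the faithfulness of base change.
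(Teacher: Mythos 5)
Your proof is correct, and the opening move is the same as the paper's: use Lemma~\ref{Lem:RecoverDDFromIsoc} to produce a corrected map $g = f + \delta$ satisfying $\pi_2^\ast f \circ \varphi_M = \varphi_N \circ \pi_1^\ast g$, and then show $g = f$. (In the paper this same $g$ arises from the universal property of the top-row equalizer and is part of the commutative cube.) Where you diverge is in how you establish $g = f$. The paper's proof stays inside Lemma~\ref{Lem:RecoverDDFromIsoc}: it also invokes the bottom-row equalizer to produce an auxiliary map $h \colon \pi_2^\ast M \to \pi_2^\ast N$, then shows $h = \pi_2^\ast g$ by uniqueness and $h = \pi_2^\ast f$ by comparing faces of the cube, and concludes by injectivity of $\pi_2^\ast$. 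You instead bypass the bottom-row equalizer entirely and argue by brute force with the cocycle conditions: pulling the relation $\pi_2^\ast f \circ \varphi_M = \varphi_N \circ \pi_1^\ast g$ back along $\pi_{12}^\ast, \pi_{13}^\ast, \pi_{23}^\ast$, substituting the cocycle identities for $\varphi_M$ and $\varphi_N$, and cancelling isomorphisms gives $\rho_2^\ast g = \rho_2^\ast f$, whence $g = f$ by faithfulness of $\rho_2^\ast$. I have checked the pullback bookkeeping (using $\pi_{13}^\ast \pi_2^\ast = \pi_{23}^\ast \pi_2^\ast = \rho_3^\ast$, $\pi_{23}^\ast \pi_1^\ast = \pi_{12}^\ast \pi_2^\ast = \rho_2^\ast$, $\pi_{12}^\ast \pi_1^\ast = \pi_{13}^\ast \pi_1^\ast = \rho_1^\ast$) and it goes through. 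The trade-off: your route uses only the top-row equalizer from the lemma but needs a three-variable computation; the paper uses both rows and thereby keeps the argument entirely diagrammatic over two variables. Both are valid, and your version perhaps makes it clearer exactly how the cocycle conditions force $\delta = 0$.
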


\begin{proof}
  It suffices to show that for two descent data $(M, \varphi_M), (N, \varphi_N)
  \in \novdd(A)$ and an $A\dparen[t]$-linear map $f \colon M \to N$,
  if $f \circ F_M = F_N \circ f$ then $\pi_2^\ast f \circ \varphi_M = \varphi_N
  \circ \pi_1^\ast f$.

  We consider the two diagrams from Lemma~\ref{Lem:RecoverDDFromIsoc}
  corresponding to $(M, \varphi_M)$ and $(N, \varphi_N)$. Since $f \circ F_M =
  F_N \circ f$ and the right squares on the diagram only involve $F_M, F_N$, the
  universal property of equalizers produces unique maps $g \colon M \to N$ and
  $h \colon \pi_2^\ast M \to \pi_2^\ast N$ fitting in the commutative diagram
  \[ \begin{tikzcd}[row sep=small, column sep=large]
    M \arrow{rr}{\varphi_M \circ \pi_1^\ast} \arrow{dd}{\varphi_M \circ
    \pi_1^\ast} \arrow{dr}{g} & & \pi_2^\ast M \arrow{dd}[near
    start]{\pi_{13}^\ast} \arrow{dr}{\pi_2^\ast f} \\ & N \arrow{rr}[near
    start]{\varphi_N \circ \pi_1^\ast} \arrow{dd}[near start]{\varphi_N \circ
    \pi_1^\ast} & & \pi_2^\ast N \arrow{dd}{\pi_{13}^\ast} \\ \pi_2^\ast M
    \arrow{rr}[near start]{\pi_{23}^\ast \varphi_M \circ \pi_{12}^\ast}
    \arrow{dr}[']{h} & & \rho_3^\ast M \arrow{dr}{\rho_3^\ast f} \\ & \pi_2^\ast
    N \arrow{rr}{\pi_{23}^\ast \varphi_N \circ \pi_{12}^\ast} & & \rho_3^\ast N.
  \end{tikzcd} \]
  We note that $g$ must be $A\dparen[t]$-linear, since all maps in
  the top face are $A\dparen[t]$-linear when we use $\pi_1^\ast
  \colon A\dparen[t] \to A\dparen[t][u]$ for
  the $A\dparen[t]$-module structure. Next, from the commutativity of
  the top face we see that the bottom face commutes when we replace $h$ with
  $\pi_2^\ast g$. By uniqueness of the morphisms, this implies that $h =
  \pi_2^\ast g$. On the other hand, by comparing the left face with the top
  face we obtain $h = \pi_2^\ast f$, as both $h$ and $\pi_2^\ast f$ are
  $A\dparen[t][u]$-linear while the image of $\varphi_M \circ \pi_1^\ast$
  generates $\pi_2^\ast M$. Since $M, N$ are both finite
  projective and $\pi_2^\ast \colon A\dparen[t] \to
  A\dparen[t][u]$, is injective, it follows from
  $\pi_2^\ast f = \pi_2^\ast g$ that $f = g$. Unraveling the top face now
  gives $\pi_2^\ast f \circ \varphi_M = \varphi_N \circ \pi_1^\ast f$.
\end{proof}

\subsection{The case of a reduced ring} \label{Subsec:Reduced}

We now prove Theorem~\ref{Thm:Main} when $A$ is reduced and $\Gamma =
\mathbb{R}$. The first step is to prove it when $A$ is an algebraically closed
field.

\begin{proposition} \label{Prop:NovDescentForGeomPts}
  Let $k$ be an algebraically closed field. Then the functor $\vect(k) \to
  \novdd(k)$ is an equivalence of categories.
\end{proposition}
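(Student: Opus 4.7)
Since the functor $\vect(k) \to \novdd(k)$ is fully faithful by Proposition~\ref{Prop:NovDescentFullyFaithful}, only essential surjectivity remains. Combined with the fully faithful functor $\novdd(k) \to \novisoc(k)$ of Proposition~\ref{Prop:NovToIsocFullyFaithful}, it suffices to show that the isocrystal $(M, F_M)$ associated to any descent datum $(M, \varphi) \in \novdd(k)$ is in the essential image of $\vect(k) \to \novisoc(k)$: an isomorphism of isocrystals then lifts uniquely to one in $\novdd(k)$ by fully faithfulness.

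The plan is to produce a lattice satisfying the hypothesis of Proposition~\ref{Prop:LatticeIsocrystal}. By Proposition~\ref{Prop:NovIsocGeomPt}, choose a basis $e_1, \dotsc, e_n$ of $M$ in which $F_M$ is upper triangular with diagonal entries $c_1, \dotsc, c_n$; the generator rescaling and fixed-point argument in the proof of that proposition further allows us to assume each $c_i \in k^\times$. Setting $M^\circ = \sum_i k\dbrack[t] e_i$, the hypothesis $F_M(m^\circ) - m^\circ \in t^\epsilon M^\circ$ of Proposition~\ref{Prop:LatticeIsocrystal} amounts to two structural conditions on this presentation: every diagonal slope $c_i = 1$, and every off-diagonal entry $a_{ij}$ of $F_M$ has $t$-support contained in $(0, \infty) \subseteq \mathbb{R}$.

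The descent datum supplies these constraints via Lemma~\ref{Lem:RecoverDDFromIsoc}: the equalizer $E \subseteq \pi_2^\ast M$ of $\id$ and $F_M \otimes F_2$ is identified with $M$ as a $k\dparen[t]$-module via $\varphi \circ \pi_1^\ast$, so $E$ has $k\dparen[t]$-rank $n$. Writing this condition in the basis $\tilde{e}_i = e_i \otimes 1$ of $\pi_2^\ast M$ yields the semilinear system $\vec{g} = A(u) F_2(\vec{g})$, where $A(u) \in M_n(k\dparen[u])$ is the substitution $t \mapsto u$ of the matrix of $F_M$. Solving this system triangularly from the bottom, the $i$th equation $g_i - c_i F_2(g_i) = \sum_{j>i} a_{ij}(u) F_2(g_j)$ must contribute an independent parameter over $k\dparen[t]$ in order for the equalizer to attain full rank $n$. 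A support analysis in $k\dparen[t][u]$, characterizing the image of the operator $1 - c F_2$ in terms of strict positivity of $u$-support on each $t$-slice, shows this forces $c_i = 1$ and $a_{ij}$ to have $t$-support in $(0, \infty)$. With these constraints, Proposition~\ref{Prop:LatticeIsocrystal} applied to $M^\circ$ concludes.

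The main obstacle will be the combinatorial support analysis of the operator $1 - c F_2$ acting on $k\dparen[t][u]$, determining precisely which right-hand sides $a(u) g$ with $g \in k\dparen[t]$ lie in its image, and extracting from this the slope-vanishing and positive-support conditions on the upper triangular presentation of $F_M$.
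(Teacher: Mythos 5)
Your framework — reduce to isocrystals via Propositions~\ref{Prop:NovDescentFullyFaithful} and \ref{Prop:NovToIsocFullyFaithful}, filter via Proposition~\ref{Prop:NovIsocGeomPt}, and exploit the equalizer from Lemma~\ref{Lem:RecoverDDFromIsoc} — agrees with the paper's, but the step you flag as the "main obstacle" hides a genuine error: the claim that the descent datum forces the off-diagonal entries $a_{ij}$ of the triangularized $F_M$ to have $t$-support in $(0, \infty)$ is false, and so your intended invocation of Proposition~\ref{Prop:LatticeIsocrystal} with the lattice $M^\circ = \sum_i k\dbrack[t] e_i$ does not go through.

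Concretely: take the trivial descent datum on $M = k\dparen[t]^{\oplus 2}$, which yields $F_M = F^{\oplus 2}$ in the standard basis $e_1, e_2$. Change basis to $e_1' = e_1$, $e_2' = e_2 + t^{-1} e_1$; this is still an upper-triangular presentation with $c_1 = c_2 = 1$, but the off-diagonal entry is now $t^{-\lambda} - t^{-1}$, whose $t$-support is $\{-\lambda, -1\} \not\subseteq (0,\infty)$, and the lattice $k\dbrack[t] e_1' + k\dbrack[t] e_2'$ fails the hypothesis of Proposition~\ref{Prop:LatticeIsocrystal}. Yet this isocrystal does arise from a descent datum — the trivial one. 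So no support analysis of $1 - cF_2$ can force positivity of the $a_{ij}$: the property you want is basis-dependent, while the input data is not. The correct conclusion, which the paper extracts, is weaker but sufficient: the off-diagonals lie in the image of $1 - F$ on $k\dparen[t]$ (equivalently, their $t^0$-coefficients vanish after appropriate iteration), which permits an inductive unipotent change of basis conjugating $F_M$ to $F^{\oplus n}$ outright. To obtain the cobounding elements $z_j \in k\dparen[t]$, the paper extracts the $t^0$-slice of the solution $y_j \in k\dparen[t][u]$ to the equalizer equation and substitutes $u \to t$ — a step your sketch does not anticipate. So while your identification of the key constraints ($c_i = 1$, the rank-$n$ equalizer, Lemma~\ref{Lem:RecoverDDFromIsoc}) is on target, aiming for Proposition~\ref{Prop:LatticeIsocrystal} via positivity of the $a_{ij}$ is the wrong target; you should instead aim to trivialize the cocycle as the paper does. (Proposition~\ref{Prop:LatticeIsocrystal} is used elsewhere in the paper — for the passage from reduced to non-reduced rings — but not at this step.)
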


\begin{proof}
  In view of Proposition~\ref{Prop:NovDescentFullyFaithful}, it suffices to show
  that the functor is essentially surjective. Then by
  Proposition~\ref{Prop:NovToIsocFullyFaithful}, it is enough to check that the
  image of $\novdd(k) \to \novisoc(k)$ is contained in the essential image of
  $\vect(k) \to \novisoc(k)$.

  Assume that $(M, \varphi) \in \novdd(k)$ induces a Novikov isocrystal $(M,
  F_M) \in \novisoc(k)$. By Proposition~\ref{Prop:NovIsocGeomPt}, there exists a
  filtration
  \[
    0 = M_0 \hookrightarrow M_1 \hookrightarrow \dotsb \hookrightarrow M_n = M
  \]
  with the property that each associated graded $M_i / M_{i-1}$ is isomorphic to
  $(k\dparen[t], c_i F)$ for some $c_i \in k^\times$.

  We inductively prove that each $(M_i, F_M \vert_{M_i}) \in \novisoc(k)$ is in
  the essential image of $\vect(k) \to \novisoc(k)$, where the base case $i = 0$
  is clear. Assume we have an isomorphism
  \[
    (M_{i-1}, F_M \vert_{M_{i-1}}) \cong (k\dparen[t], F)^{\oplus
    (i-1)}.
  \]
  Fixing a splitting of the short exact sequence $0 \to M_{i-1} \to M_i \to
  M_i/M_{i-1} \to 0$, just as $k\dparen[t]$-vector spaces, we find an
  isomorphism $M_i \cong k\dparen[t]^{\oplus i}$ under which $F_M \vert_{M_i}$
  corresponds to
  \[
    F_M \vert_{M_i} = \begin{pmatrix} I_{i-1} & C \\ 0 & c_i \end{pmatrix} F,
  \]
  where $I_{i-1}$ denotes the identity matrix and $C = (c_1, \dotsc, c_{i-1})^T
  \in k\dparen[t]^{\oplus (i-1)}$ is a column vector.

  On the other hand, by Lemma~\ref{Lem:RecoverDDFromIsoc}, the map
  \[
    \id - F_M \vert_{M_i} \otimes F_2 \colon \pi_2^\ast M_i \to \pi_2^\ast M_i
  \]
  has an $i$-dimensional kernel. Since the kernel when restricted to $\pi_2^\ast
  M_{i-1}$ is $(i-1)$-dimensional, there exist $x_1, \dotsc, x_i \in
  k\dparen[t][u]$ with $x_i \neq 0$ satisfying
  \[
    x_1 = F_2(x_1) + (\pi_2^\ast c_1) F_2(x_i), \; \dotsc, \; x_{i-1} =
    F_2(x_{i-1}) + (\pi_2^\ast c_{i-1}) F_2(x_i), \; x_i = c_i F_2(x_i).
  \]
  Because $c_i \in k^\times$ and $x_i \neq 0$, it follows from the last equation
  that $x_i \in k\dparen[t]^\times \subseteq k\dparen[t][u]$ and $c_i = 1$. Then
  setting $y_j = x_j / x_i$, we obtain
  \[
    y_1 = F_2(y_1) + \pi_2^\ast c_1, \quad \dotsb, \quad y_{i-1} = F_2(y_{i-1})
    + \pi_2^\ast c_{i-1}.
  \]
  For each $y_j = \sum_{\alpha \in I_j} a_{\alpha,j} t^{d_{\alpha,j}}
  u^{e_{\alpha,j}}$, we define
  \[
    z_j = \sum_{\alpha \in I_j, d_{\alpha,j} = 0} a_{\alpha,j} t^{e_{\alpha,j}}
    \in k\dparen[t].
  \]
  Then we have
  \[
    z_1 = F(z_1) + c_1, \quad \dotsb, \quad z_{i-1} = F(z_{i-1}) + c_{i-1}.
  \]

  Finally, we can write
  \[
    F_M \vert_{M_i} = \begin{pmatrix} I_{i-1} & C \\ 0 & 1 \end{pmatrix} F =
    \begin{pmatrix} I_{i-1} & Z \\ 0 & 1 \end{pmatrix} F \begin{pmatrix}
    I_{i-1} & Z \\ 0 & 1 \end{pmatrix}^{-1},
  \]
  where $Z = (z_1, \dotsc, z_{i-1})^T \in k\dparen[t]^{\oplus(i-1)}$. This shows
  that under this change of basis, we have $(M_i, F_M \vert_{M_i}) \cong
  (k\dparen[t], F)^{\oplus i}$.
\end{proof}

\begin{proposition} \label{Prop:NovDescentForProdGeomPts}
  Let $A = \prod_{i \in I} k_i$ be a product of algebraically closed fields.
  Then the functor $\vect(A) \to \novdd(A)$ is an equivalence.
\end{proposition}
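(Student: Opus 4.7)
The functor $\vect(A) \to \novdd(A)$ is already fully faithful by Proposition~\ref{Prop:NovDescentFullyFaithful}, so it remains to prove essential surjectivity. By Proposition~\ref{Prop:NovToIsocFullyFaithful}, this reduces to showing that for every $(M, \varphi) \in \novdd(A)$, the induced Novikov isocrystal $(M, F_M) \in \novisoc(A)$ lies in the essential image of $\vect(A) \to \novisoc(A)$. I plan to verify this via Proposition~\ref{Prop:LatticeIsocrystal}.

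For each $i \in I$, the base change $(M_i, \varphi_i) \in \novdd(k_i)$ is equivalent to a trivial descent datum on some $N_i \in \vect(k_i)$ by Proposition~\ref{Prop:NovDescentForGeomPts}; identifying $N_i$ canonically with $M_i^{F_{M_i} = 1}$, we have $(M_i, F_{M_i}) \cong (N_i \otimes_{k_i} k_i\dparen[t], \id_{N_i} \otimes F)$, and the dimensions $\dim_{k_i} N_i$ are uniformly bounded by $n$, where $n$ is chosen so that $M$ is a direct summand of $A\dparen[t]^n$. I define $M_i^\circ := k_i\dbrack[t] \cdot N_i \subseteq M_i$, a canonical $F_{M_i}$-stable $k_i\dbrack[t]$-lattice with $M_i^\circ[t^{-1}] = M_i$, and let $M^\circ \subseteq M$ be the preimage of $\prod_i M_i^\circ$ under the canonical injection $M \hookrightarrow \prod_i M_i$. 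Because $A\dbrack[t]$ embeds diagonally in $\prod_i k_i\dbrack[t]$, this $M^\circ$ is an $A\dbrack[t]$-submodule; it is stable under $F_M$ factorwise, and $M^\circ[t^{-1}] = M$ because every element of $M \subseteq A\dparen[t]^n$ has locally finite, bounded-below support in $\mathbb{R}$.

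The remaining hypothesis of Proposition~\ref{Prop:LatticeIsocrystal} is that for each $m^\circ \in M^\circ$ there is some $\epsilon > 0$ with $F_M(m^\circ) - m^\circ \in t^\epsilon M^\circ$. Per factor, this holds because on $k_i\dbrack[t]$ the shift $F(f) - f$ lands in $t^\delta k_i\dbrack[t]$ where $\delta$ is the smallest positive element of $\supp(f)$. The principal technical obstacle is twofold: (a) ensuring that the per-factor constants $\epsilon_i$ admit a uniform lower bound $\epsilon > 0$ across $i \in I$, and (b) verifying that $M^\circ$ is finitely generated as an $A\dbrack[t]$-module. Both points are ultimately consequences of the defining local finiteness condition for $A\dparen[t]$: the single element $m^\circ \in M \subseteq A\dparen[t]^n$ has support that is locally finite and bounded below in $\mathbb{R}$, uniformly controlling the supports of the projections $(m^\circ_i)_i$ and hence the factorwise estimates; finite generation follows by choosing generators of $\prod_i M_i^\circ$ whose supports in $\prod_i k_i\dparen[t]^n$ are uniformly bounded using the rank bound. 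Once these verifications are complete, Proposition~\ref{Prop:LatticeIsocrystal} furnishes a finite projective $A$-module---which one identifies with $N := \prod_i N_i$---trivializing the isocrystal $(M, F_M)$ and hence the original descent datum $(M, \varphi)$.
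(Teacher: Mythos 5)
Your overall architecture — reduce to the isocrystal side via Proposition~\ref{Prop:NovToIsocFullyFaithful} and then invoke Proposition~\ref{Prop:LatticeIsocrystal} — is a plausible plan and genuinely different from the paper's, which instead assembles $M_0 = \prod_i M_{0,i}$, shows $M \subseteq M_0 \otimes_A A\dparen[t]$ by a direct support estimate, and then upgrades the inclusion to an equality via the dual descent datum. However, your sketch has a genuine gap, and it is located exactly at the two points you flag as ``the principal technical obstacle.''

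The flaw is in the claimed resolution of (a) and (b). You assert these follow from the local finiteness / bounded-below support of $m^\circ$ as an element of $A\dparen[t]^{\oplus n}$. But the lattice $M_i^\circ = k_i\dbrack[t]\cdot N_i$ is defined in terms of the isomorphism $f_i \colon M_i \cong N_i \otimes_{k_i} k_i\dparen[t]$ furnished by Proposition~\ref{Prop:NovDescentForGeomPts}, and the exponent $\epsilon_i$ controlling $F_{M_i}(m_i^\circ)-m_i^\circ$ is read off from the $N_i$-coordinates, \emph{not} from the ambient $k_i\dparen[t]^{\oplus n}$-coordinates coming from the splitting $M \subseteq A\dparen[t]^{\oplus n}$. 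The two coordinate systems differ by $f_i$, and a priori nothing controls how $f_i$ shifts supports as $i$ ranges over $I$. Concretely: bounded-below support of $m^\circ$ inside $A\dparen[t]^{\oplus n}$ gives a uniform bound on $\supp(m_i^\circ) \subseteq k_i\dparen[t]^{\oplus n}$, but says nothing about whether $m_i^\circ$ even lies in $M_i^\circ$, let alone with a uniform $\epsilon_i$. The same issue undermines both $M^\circ[t^{-1}]=M$ (you need $m_i$ to lie in $t^{-d}M_i^\circ$ with $d$ \emph{uniform} in $i$) and the finite generation of $M^\circ$: choosing bounded-support generators of $\prod_i M_i^\circ$ does not obviously produce $A\dbrack[t]$-generators of the preimage $M^\circ \subseteq M$, because the preimage need not surject onto each coordinate.

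This uniformity across $i$ is precisely the hard content of the proposition, and it cannot be deduced purely from the one-variable local finiteness condition on $A\dparen[t]$. The paper proves it by exploiting the cocycle $\varphi$: after choosing generators $m_1, \dotsc, m_n$ of $M$ and writing $\pi_1^\ast m_j = \sum_k a_{jk}\,\pi_2^\ast m_k$ with $a_{jk} \in A\dparen[t][u]$, one uses the two-variable support condition on $A\dparen[t][u]$ (the finiteness of $\supp(a_{jk})$ in half-planes $\lbrace 2x+y \ge C\rbrace$) to bootstrap a uniform lower bound on $\supp(m_{j,i})$ for all $i$. That argument, or something equivalent to it, is missing from your proof and is what would be needed to make your Lemma~\ref{Prop:LatticeIsocrystal}-based route go through.
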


\begin{proof}
  By Proposition~\ref{Prop:NovDescentFullyFaithful}, it remains to check that
  the functor is essentially surjective. Let $(M, \varphi) \in \novdd(A)$ be
  a descent datum. By Proposition~\ref{Prop:NovDescentForGeomPts}, for each $i$
  there exists a $k_i$-vector space $M_{0,i}$ and an isomorphism
  \[
    f_i \colon M \otimes_{A\dparen[t]} k_i\dparen[t] \cong
    M_{0,i} \otimes_{k_i} k_i\dparen[t],
  \]
  compatible with the descent data. Say $M$ is generated by $N$ elements. Then
  $M \otimes_{A\dparen[t]} k_i\dparen[t]$ has dimension at
  most $N$, and hence $\dim_{k_i} M_{0,i} \le N$. This uniform bound implies
  that $M_0 = \prod_{i \in I} M_{0,i}$ is finite projective over $A$. It remains
  to show that $(M, \varphi) \cong (M_0 \otimes_A A\dparen[t], \id)$.

  Taking the product of $f_i$, we obtain an isomorphism
  \[
    f = \prod_{i \in I} f_i \colon M \otimes_{A\dparen[t]} \prod_{i
    \in I} k_i\dparen[t] \cong M_0 \otimes_A \prod_{i \in I}
    k_i\dparen[t].
  \]
  Compatibility of each $f_i$ with descent data means that
  \[
    \pi_2^\ast f_i \circ (\varphi \otimes_{A\dparen[t][u]}
    k_i\dparen[t][u]) = \pi_1^\ast f_i,
  \]
  and taking the product over all $i$ gives a commutative diagram
  \[ \begin{tikzcd}[column sep=large]
    \pi_1^\ast M \otimes_{A\dparen[t][u]} \prod_{i \in I} k_i\dparen[t][u]
    \arrow{r}{(\prod_i \pi_1)^\ast f} \arrow{d}{\varphi \otimes \id} & M_0
    \otimes_A \prod_{i \in I} k_i\dparen[t][u] \arrow[equals]{d} \\ \pi_2^\ast M
    \otimes_{A\dparen[t][u]} \prod_{i \in I} k_i\dparen[t][u] \arrow{r}{(\prod_i
    \pi_2)^\ast f} & M_0 \otimes_A \prod_{i \in I} k_i\dparen[t][u].
  \end{tikzcd} \]
  In particular, if we consider $M$ as an $A\dparen[t]$-submodule of
  $M_0 \otimes_A \prod_{i \in I} k_i\dparen[t]$ via $f$, the images
  of
  \begin{align*}
    \pi_1^\ast M &= M \otimes_{A\dparen[t], \pi_1^\ast} A\dparen[t][u]
    \hookrightarrow M_0 \otimes_A \prod_{i \in I} k_i\dparen[t][u], \\
    \pi_2^\ast M &= M \otimes_{A\dparen[t], \pi_2^\ast} A\dparen[t][u]
    \hookrightarrow M_0 \otimes_A \prod_{i \in I} k_i\dparen[t][u],
  \end{align*}
  agree.

  For every element $\alpha = (\alpha_i) \in M_0 \otimes_A \prod_{i \in I}
  k_i\dparen[t][u]$, define the support of $\alpha$ as $\supp(\alpha) =
  \bigcup_{i \in I} \supp(\alpha_i) \subseteq \mathbb{R}^2$, where the support
  of $\alpha_i \in M_{0,i}\dparen[t][u]$ is as in Definition~\ref{Def:Support}.
  For $\beta = (\beta_i) \in M_0 \otimes_A \prod_{i \in I} k_i\dparen[t]$ we
  similarly define $\supp(\beta) = \bigcup_i \supp(\beta_i) \subseteq
  \mathbb{R}^2$.

  We now claim that for each $m \in M \subseteq M_0 \otimes_A \prod_{i \in I}
  k_i\dparen[t]$, the support $\supp(m) \subseteq \mathbb{R}$ is
  bounded below. Choose $m_1, \dotsc, m_n \in M \subseteq M_0 \otimes_A \prod_{i
  \in I} k_i\dparen[t]$ a finite collection of generators as an
  $A\dparen[t]$-module. We can find elements $a_{jk} \in
  A\dparen[t][u]$ for $1 \le j, k \le n$ satisfying
  \begin{equation}
    \pi_1^\ast m_j = \sum_{k=1}^n a_{jk} \pi_2^\ast m_k \in M_0 \otimes_A
    \prod_{i \in I} k_i\dparen[t][u]. \tag{$\ast$}
  \end{equation}
  Fix a sufficiently small constant $C \in \mathbb{R}$ satisfying the property
  that
  \[
    \bigcup_{j,k=1}^n \supp(a_{jk}) \subseteq \lbrace (x, y) \in \mathbb{R}^2 :
    2x + y \ge C \rbrace.
  \]
  For each $i \in I$ and $1 \le j \le n$, the set $\supp(m_{j,i})$ has a minimal
  element, and hence $D_i = \min \bigcup_{j=1}^n \supp(m_{j,i})$ exists. From
  ($\ast$) we see that
  \[
    \supp(m_{j,i}) \times \lbrace 0 \rbrace \subseteq \bigcup_{k=1}^n
    (\supp(a_{jk,i}) + (\lbrace 0 \rbrace \times \supp(m_{k,i})))
  \]
  inside $\mathbb{R}^2$, where by $+$ we mean the Minkowski sum. By our
  assumptions, the right hand side is always contained in $\lbrace (x, y) : 2x +
  y \ge C + D_i \rbrace$. On the other hand, for some $1 \le j \le n$, the left
  hand side contains $(D_i, 0)$. This shows that $2 D_i \ge C + D_i$, and hence
  $D_i \ge C$. This shows that $\supp(m_{j,i}) \subseteq [C, \infty)$ for all
  $i, j$, and taking the union over all $i$, we obtain $\supp(m_j) \subseteq [C,
  \infty)$ for all $1 \le j \le n$. For an arbitrary $m \in M$, we can now write
  $m = \sum_{j=1}^n a_j m_j$ for $a_j \in A\dparen[t]$, and since
  each $\supp(a_j)$ is bounded below, we conclude that $\supp(m)$ is bounded
  below as well.

  Recall that the images of $\pi_1^\ast M \to M_0 \otimes_A \prod_{i \in I}
  k_i\dparen[t][u]$ and $\pi_2^\ast M \to M_0 \otimes_A \prod_{i \in I}
  k_i\dparen[t][u]$ agree. For each element $\xi$ in this common image, we can
  write $\xi = \sum_{j=1}^n a_j \pi_1^\ast m_j$, where $a_j \in A\dparen[t][u]$
  and $m_j \in M$. Then
  \[
    \supp(\xi) \subseteq \bigcup_j (\supp(a_j) + (\supp(m_j) \times \lbrace 0
    \rbrace))
  \]
  inside $\mathbb{R}^2$. Because each $\supp(m_j)$ is bounded below, for every
  $C \in \mathbb{R}$, the intersection
  \[
    \supp(\xi) \cap \lbrace (x, y) \in \mathbb{R}^2 : x + y \le C \rbrace
  \]
  has only finitely many $y$-coordinate values. For an arbitrary $m \in M$,
  applying this to $\xi = \pi_2^\ast m$ shows that the intersection $\supp(m)
  \cap (-\infty, C]$ is finite. That is, we have
  \[
    M \subseteq M_0 \otimes_A A\dparen[t].
  \]

  It remains to show that this inclusion is an equality. We apply the entire
  argument to the dual descent datum $M^\vee$ and the isomorphisms
  \[
    (f_i^{-1})^\vee \colon M^\vee \otimes_{A\dparen[t]} k_i\dparen[t] \cong
    M_{0,i}^\vee \otimes_{k_i} k_i\dparen[t].
  \]
  Then we similarly obtain that the image of the composition
  \[
    M^\vee \hookrightarrow M^\vee \otimes_{A\dparen[t]} \prod_{i \in
    I} k_i\dparen[t] \xrightarrow{(f^{-1})^\vee} M_0^\vee \otimes_A
    \prod_{i \in I} k_i\dparen[t]
  \]
  lies in $M_0^\vee \otimes_A A\dparen[t]$. Dualizing the resulting map induces
  an $A\dparen[t]$-linear homomorphism $M_0 \otimes_A A\dparen[t] \to M$ whose
  base change to $\prod_{i \in I} k_i\dparen[t]$ recovers $f^{-1}$. Hence the
  composition
  \[
    M_0 \otimes_A A\dparen[t] \to M \hookrightarrow M_0 \otimes_A A\dparen[t]
  \]
  is the identity map upon base changing to $\prod_{i \in I} k_i\dparen[t]$. On
  the other hand, both sides are finite projective and $A\dparen[t]
  \hookrightarrow \prod_{i \in I} k_i\dparen[t]$ is injective, and thus the map
  is the identity before base change. Therefore $M = M_0 \otimes_A A\dparen[t]$.
\end{proof}

\begin{proposition} \label{Prop:NovDescentForReduced}
  Let $A$ be a reduced ring. Then the functor $\vect(A) \to \novdd(A)$ is an
  equivalence.
\end{proposition}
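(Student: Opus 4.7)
The plan is to reduce the case of a reduced ring to the case of a product of algebraically closed fields, which has just been settled in Proposition~\ref{Prop:NovDescentForProdGeomPts}. By Proposition~\ref{Prop:NovDescentFullyFaithful} the functor is already fully faithful, so only essential surjectivity remains. Given a descent datum $(M, \varphi) \in \novdd(A)$ with associated Novikov isocrystal $(M, F_M) \in \novisoc(A)$, the fully faithful functor $\novdd(A) \to \novisoc(A)$ of Proposition~\ref{Prop:NovToIsocFullyFaithful} will transport any isomorphism $(M, F_M) \cong (M_0 \otimes_A A\dparen[t], \id_{M_0} \otimes F)$ in $\novisoc(A)$ to an isomorphism between $(M, \varphi)$ and the trivial descent datum on $M_0 \otimes_A A\dparen[t]$. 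Hence it suffices to prove that $(M, F_M)$ lies in the essential image of $\vect(A) \to \novisoc(A)$.

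Since $A$ is reduced, its nilradical vanishes, so the canonical map
\[
  A \hookrightarrow B = \prod_{\mathfrak{p} \in \Spec A} \overline{\kappa(\mathfrak{p})}
\]
into the product of algebraic closures of the residue fields is injective. The functor $\novdd \to \novisoc$ constructed in Subsection~\ref{Subsec:DDToIsoc} is compatible with base change along $A \to B$: the maps $F_{M,2}$ and $F_{M,3}$ are defined in terms of $\varphi$, which base changes tautologically, and the equalizer recovering $F_M$ commutes with base change because $M$ is finite projective, hence flat. Therefore the isocrystal associated to the base change of $(M, \varphi)$ to $B$ is precisely $(M, F_M) \otimes_{A\dparen[t]} B\dparen[t]$.

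Now Proposition~\ref{Prop:NovDescentForProdGeomPts} applied to the descent datum $(M \otimes_{A\dparen[t]} B\dparen[t], \varphi \otimes \id) \in \novdd(B)$ shows that it is effective, so its associated isocrystal lies in the essential image of $\vect(B) \to \novisoc(B)$. Applying Proposition~\ref{Prop:NovIsocInjective} to the injection $A \hookrightarrow B$ then yields that $(M, F_M)$ itself lies in the essential image of $\vect(A) \to \novisoc(A)$, finishing the proof. I do not anticipate a genuine obstacle here: the argument is a formal assembly of results already proved, with the only bookkeeping being the verification that the construction of $F_M$ from $\varphi$ behaves well under base change.
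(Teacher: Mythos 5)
Your proof is correct and follows essentially the same route as the paper: reduce to essential surjectivity, pass to the associated Novikov isocrystal via the fully faithful functor of Proposition~\ref{Prop:NovToIsocFullyFaithful}, embed $A$ into the product $B$ of algebraic closures of residue fields, apply Proposition~\ref{Prop:NovDescentForProdGeomPts} over $B$, and conclude with Proposition~\ref{Prop:NovIsocInjective}. The only difference is that you explicitly verify compatibility of the $\novdd \to \novisoc$ functor with base change, which the paper takes for granted.
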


\begin{proof}
  In view of Proposition~\ref{Prop:NovDescentFullyFaithful}, it suffices to show
  that the functor is essentially surjective. Then by
  Proposition~\ref{Prop:NovToIsocFullyFaithful}, it is enough to check that the
  image of $\novdd(A) \to \novisoc(A)$ is contained in the essential image of
  $\vect(A) \to \novisoc(A)$. Consider any $(M, F_M) \in \novisoc(A)$ induced
  from $(M, \varphi_M) \in \novdd(A)$. Since $A$ is reduced, we can embed $A$
  into a product of algebraically closed fields as
  \[
    A \hookrightarrow B = \prod_{\mathfrak{p}}
    \overline{\operatorname{Frac}(A/\mathfrak{p})},
  \]
  where $\mathfrak{p}$ ranges over all prime ideals of $A$. Base changing along
  this ring homomorphism induces the descent datum $(M, \varphi_M)
  \otimes_{A\dparen[t]} B\dparen[t]$. By
  Proposition~\ref{Prop:NovDescentForProdGeomPts}, this is contained in the
  essential image of $\vect(B) \to \novdd(B)$. Hence the associated isocrystal
  $(M, F_M) \otimes_{A\dparen[t]} B\dparen[t]$ lies in the essential image of
  $\vect(B) \to \novisoc(B)$. We can now apply
  Proposition~\ref{Prop:NovIsocInjective} to conclude that $(M, F_M)$ is in the
  essential image of $\vect(A) \to \novisoc(A)$.
\end{proof}

\subsection{The case of a general ring} \label{Subsec:General}

We now prove Theorem~\ref{Thm:Main} when $A$ is an arbitrary commutative
ring and $\Gamma = \mathbb{R}$. We begin by studying the square-zero deformation
theory of Novikov descent data.

\begin{lemma} \label{Lem:VectSquareZeroDeform}
  Let $A$ be a commutative ring, and let $I \subseteq A$ be an ideal satisfying
  $I^2 = 0$.
  \begin{enumerate}
    \item Let $M, N$ be two finite projective $A$-modules together with an
      isomorphism $\bar{f} \colon M/IM \cong N/IN$ of $A/I$-modules. Then
      $\bar{f}$ lifts to an isomorphism $f \colon M \cong N$ of $A$-modules.
    \item Let $\bar{M}$ be a finite projective $A/I$-module. Then there exists a
      finite projective $A$-module $M$ together with an isomorphism $M/IM \cong
      \bar{M}$.
  \end{enumerate}
\end{lemma}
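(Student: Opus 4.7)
The plan is to prove both parts by combining the universal property of projectivity with the basic observation that any $A$-module $C$ satisfying $C = IC$ must vanish when $I^2 = 0$, since $C = IC = I \cdot IC \subseteq I^2 C = 0$. Both parts are then standard square-zero deformation arguments.

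For part (1), I use projectivity of $M$ to lift the composite $M \twoheadrightarrow M/IM \xrightarrow{\bar f} N/IN$ along the surjection $N \twoheadrightarrow N/IN$, obtaining an $A$-linear map $f \colon M \to N$ that reduces to $\bar f$ modulo $I$. To show $f$ is an isomorphism, I analyze the cokernel and kernel separately. Reducing the exact sequence $M \to N \to \operatorname{coker}(f) \to 0$ modulo $I$ gives $\operatorname{coker}(f)/I\operatorname{coker}(f) \cong \operatorname{coker}(\bar f) = 0$, so the vanishing lemma above forces $\operatorname{coker}(f) = 0$. Since $N$ is projective, the now-surjective $f$ splits and yields an isomorphism $M \cong N \oplus \ker(f)$; reducing mod $I$ and using that $\bar f$ is an isomorphism gives $\ker(f)/I\ker(f) = 0$, and the same vanishing lemma gives $\ker(f) = 0$.

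For part (2), I realize $\bar M$ as the image of an idempotent $\bar e \in M_n(A/I)$, which is possible because $\bar M$ is finite projective over $A/I$. I lift $\bar e$ arbitrarily to a matrix $e_0 \in M_n(A)$, so that $y := e_0^2 - e_0 \in I \cdot M_n(A)$, and then apply the standard perturbation formula
\[
  e := e_0 + (1 - 2 e_0) y,
\]
which satisfies $e^2 = e$ by a direct expansion using $y^2 = 0$. Setting $M := \operatorname{im}(e) \subseteq A^n$ produces a direct summand of $A^n$, hence a finite projective $A$-module, and by construction $M/IM \cong \operatorname{im}(\bar e) = \bar M$.

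Neither part presents a serious obstacle; the only calculation requiring any care is verifying the idempotent identity $e^2 = e$, which amounts to checking that the cross terms of $(e_0 + (1 - 2 e_0) y)^2$ collapse under $y^2 = 0$. I expect part (1) to be easier and part (2) to be the slightly more delicate statement, though both are classical. The hypothesis $I^2 = 0$ could in fact be weakened to $I$ nilpotent by inducting up the filtration $A \supseteq I \supseteq I^2 \supseteq \cdots$, but the square-zero version is all that is needed for the nilpotent thickening argument described in Subsection~\ref{Subsec:General}.
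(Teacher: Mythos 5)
Your proof is correct, and for part (2) it takes a genuinely different and more elementary route than the paper. For part (1) the lifting step is essentially the same (both lift inside $\Hom_A(M,N)$, whose reduction mod $I$ is $\Hom_{A/I}(M/IM, N/IN)$), but you verify that the lift is an isomorphism by a purely module-theoretic argument: the cokernel and kernel each satisfy $C = IC$, hence $C = I^2 C = 0$. The paper instead localizes to the free case and checks that the determinant is a unit modulo $I$. Both are fine; yours avoids passing to the locally free setting. For part (2) the two proofs diverge more sharply. The paper lifts $\bar{M}$ locally on a cover of $\Spec A \cong \Spec A/I$, uses (1) to choose gluing isomorphisms, and then corrects the failure of the cocycle condition by the vanishing of \v{C}ech $H^2$ of quasi-coherent sheaves on an affine scheme. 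You instead lift the defining idempotent $\bar{e} \in M_n(A/I)$ directly, using the standard perturbation $e = e_0 + (1 - 2e_0)(e_0^2 - e_0) = 3e_0^2 - 2e_0^3$, which is idempotent because $y = e_0^2 - e_0$ commutes with $e_0$ and $y^2 \in I^2 M_n(A) = 0$. This is global, avoids cohomology entirely, and is arguably the cleaner argument; the paper's approach is more in the spirit of deformation theory and generalizes more readily to non-affine bases. Your closing remark that the full nilpotent case follows by filtering $I \supseteq I^2 \supseteq \cdots$ is also correct and is exactly how the lemma is used in Subsection~\ref{Subsec:General}, though there $I$ is only known to be nilpotent after a finiteness reduction.
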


\begin{proof}
  (1) Letting $P = \Hom_A(M, N)$, we observe that $P$ is again a finite
  projective $A$-module with $\bar{f} \in P/IP$. Lift $\bar{f}$ to any element
  $f \in P$. We now check that $f$ is indeed an isomorphism $f \colon M \to N$
  because we can work locally so that both $M$ and $N$ are free, and then the
  determinant being a unit can be checked modulo $I$.

  (2) Note that $\Spec A$ is homeomorphic to $\Spec A/I$. There exists a cover
  $\lbrace U_i \rbrace$ of $\Spec A \cong \Spec A/I$ consisting of affine opens
  over which $\bar{M}$ is free. We lift $\bar{M} \vert_{U_i}$ to $M_i$ on each
  $U_i$ separately, and use (1) to choose isomorphisms $M_i \vert_{U_i \cap U_j}
  \cong M_j \vert_{U_i \cap U_j}$ lifting the identity map on $\bar{M}
  \vert_{U_i \cap U_j}$. These isomorphisms need not satisfy the cocycle
  condition, but the induced $2$-cocycle takes values in
  \[
    \ker(\mathrm{GL}_A(M_i \vert_{U_i}) \to \mathrm{GL}_{A/I}(\bar{M}
    \vert_{U_i})) \cong \Hom_{A/I}(\bar{M}, I \otimes_{A/I} \bar{M})
    \vert_{U_i}.
  \]
  The second \v{C}ech cohomology group of the quasi-coherent sheaf
  $\Hom_{A/I}(\bar{M}, I \otimes_{A/I} \bar{M})$ with respect to $\lbrace
  U_i \rbrace$ vanishes on $\Spec A/I$ as each $U_i$ is affine. Therefore the
  $2$-cocycle is a coboundary, and hence we may modify the isomorphisms $M_i
  \vert_{U_i \cap U_j} \cong M_j \vert_{U_i \cap U_j}$ so that they do satisfy
  the cocycle condition.
\end{proof}

\begin{lemma} \label{Lem:NovDescentSquareZero}
  Let $A$ be a commutative ring, and let $I \subseteq A$ be an ideal satisfying
  $I^2 = 0$. Let $(M, \varphi) \in \novdd(A)$ be a descent datum satisfying the
  property that its base change to $A/I$ lies in the essential image of
  $\vect(A/I) \to \novdd(A/I)$. Then $(M, \varphi)$ is in the essential image of
  $\vect(A) \to \novdd(A)$.
\end{lemma}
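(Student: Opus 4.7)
The plan is to reduce, using the square-zero deformation results of Lemma~\ref{Lem:VectSquareZeroDeform}, to a linearized cocycle equation, which I then solve by an explicit monomial expansion.

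By hypothesis there exist a finite projective $A/I$-module $\bar{M}_0$ and an isomorphism of descent data $(M, \varphi) \otimes_{A\dparen[t]} (A/I)\dparen[t] \cong (\bar{M}_0 \otimes_{A/I} (A/I)\dparen[t], \id)$. Using Lemma~\ref{Lem:VectSquareZeroDeform}(2), I lift $\bar{M}_0$ to a finite projective $A$-module $M_0$. The ideal $I \cdot A\dparen[t] \subseteq A\dparen[t]$ is square-zero with quotient $(A/I)\dparen[t]$, so Lemma~\ref{Lem:VectSquareZeroDeform}(1) applied to $A\dparen[t]$ produces an $A\dparen[t]$-linear isomorphism $M \cong M_0 \otimes_A A\dparen[t]$ lifting the given trivialization. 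Transporting $\varphi$ along this isomorphism, I may assume $M = M_0 \otimes_A A\dparen[t]$ and $\varphi \equiv \id \pmod{I}$.

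Write $\varphi = \id + \psi$. Because $M_0$ is finite projective, $\psi$ can be viewed as an element of $E\dparen[t][u]$ with $E = I \cdot \Hom_A(M_0, M_0)$, and any product of two elements valued in $E$ vanishes since $I^2 = 0$. Thus the cocycle condition $\pi_{13}^\ast \varphi = \pi_{23}^\ast \varphi \circ \pi_{12}^\ast \varphi$ linearizes to
\[
  \pi_{13}^\ast \psi = \pi_{12}^\ast \psi + \pi_{23}^\ast \psi.
\]
On the other hand, an isomorphism of descent data $(M_0 \otimes_A A\dparen[t], \id) \cong (M_0 \otimes_A A\dparen[t], \varphi)$ given by a gauge transformation $g = \id + \eta$ with $\eta \in E\dparen[t]$ amounts, modulo the same vanishing, to the additive equation $\psi = \pi_2^\ast \eta - \pi_1^\ast \eta$. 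The lemma is therefore reduced to exhibiting such an $\eta$.

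To produce $\eta$, I expand $\psi = \sum_{a,b} \psi_{a,b} t^a u^b$ with $\psi_{a,b} \in E$. Comparing the coefficient of $t^x u^y v^z$ on both sides of the linearized cocycle identity, and splitting cases on which of $x, y, z$ are zero, forces $\psi_{a,b} = 0$ unless $a = 0$ or $b = 0$, together with $\psi_{a,0} + \psi_{0,a} = 0$ for every $a$ (in particular $\psi_{0,0} = 0$). Setting $\eta = \sum_a \psi_{0,a} t^a$, which lies in $E\dparen[t]$ because its support inherits the required boundedness from that of $\psi$, one computes $\pi_2^\ast \eta - \pi_1^\ast \eta = \sum_a \psi_{0,a}(u^a - t^a) = \psi$. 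The main subtlety is the reduction to this linearized problem using the square-zero deformation lemma; once the cocycle equation is additive, the monomial bookkeeping above is elementary.
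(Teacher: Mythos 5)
Your argument is correct and follows the same route as the paper: reduce via Lemma~\ref{Lem:VectSquareZeroDeform} to the case $M = M_0\otimes_A A\ldparen t^{\mathbb{R}} \rdparen$ with $\varphi \equiv \id \pmod{I}$, linearize the cocycle condition using $I^2 = 0$, and read off from the monomial coefficients that $\psi$ is supported on the two coordinate axes with opposite coefficients, hence an additive coboundary. The only difference is that you spell out the coefficient comparison in full, which the paper compresses into ``by considering the coefficients \dots we readily see,'' and your sign conventions (which direction the trivializing isomorphism runs) differ harmlessly from the paper's.
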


\begin{proof}
  By assumption, there exists a finite projective $A/I$-module $\bar{M}_0$
  together with an isomorphism
  \[
    (M/I\dparen[t] M, \bar{\varphi}) \cong (\bar{M}_0\dparen[t], \id)
  \]
  in $\novdd(A/I)$. Using Lemma~\ref{Lem:VectSquareZeroDeform}, we choose a
  finite projective $A$-module $M_0$ with $M_0/IM_0 \cong \bar{M}_0$. As
  $A\dparen[t] \twoheadrightarrow (A/I)\dparen[t]$ is also a square-zero
  thickening, we may use Lemma~\ref{Lem:VectSquareZeroDeform} again to choose an
  isomorphism
  \[
    M \cong M_0\dparen[t]
  \]
  lifting the isomorphism $M/I\dparen[t] M \cong \bar{M}_0\dparen[t] \cong
  M_0\dparen[t] / (IM_0)\dparen[t]$. Under this isomorphism, we obtain a new
  cocycle
  \[
    \psi \colon M_0\dparen[t][u] \xrightarrow{\cong} M_0\dparen[t][u]
  \]
  that reduces to the identity map on $(M_0/IM_0)\dparen[t][u]$.

  It now suffices to show that $\psi$ is a coboundary, i.e., there exists an
  $A\dparen[t]$-linear automorphism $\xi \colon M_0\dparen[t] \cong
  M_0\dparen[t]$ for which $\psi = \pi_2^\ast \xi^{-1} \circ \pi_1^\ast \xi$. We
  first write
  \[
    \psi = \id + \psi_+, \quad \psi \colon M_0\dparen[t][u] \to
    (IM_0)\dparen[t][u].
  \]
  Since $I$ squares to zero, the cocycle condition on $\psi$ translates to an
  additive cocycle condition
  \[
    \pi_{13}^\ast \psi_+ = \pi_{23}^\ast \psi_+ + \pi_{12}^\ast \psi_+,
  \]
  as $A\dparen[t][u][v]$-linear maps $M_0\dparen[t][u][v] \to
  (IM_0)\dparen[t][u][v]$. Here, we note that $(IM_0)\dparen[t][u][v] =
  I\dparen[t][u][v] M_0\dparen[t][u][v]$, and this justifies that $\pi_{23}^\ast
  \psi_+ \circ \pi_{12}^\ast \psi_+ = 0$.

  As $M_0$ is finite projective over $A$, there are canonical isomorphisms
  \begin{align*}
    \Hom_{A\dparen[t][u]}(M_0\dparen[t][u], (IM_0)\dparen[t][u]) &\cong
    \Hom_A(M_0, IM_0)\dparen[t][u], \\
    \Hom_{A\dparen[t][u][v]}(M_0\dparen[t][u][v], (IM_0)\dparen[t][u][v]) &\cong
    \Hom_A(M_0, IM_0)\dparen[t][u][v]
  \end{align*}
  by Lemma~\ref{Lem:NovikovExact}.
  Using this isomorphism, we identify $\psi_+$ as an element of $\Hom_A(M_0,
  IM_0)\dparen[t][u]$, and similarly the additive cocycle condition as an
  equation valued in the group $\Hom_A(M_0, IM_0)\dparen[t][u][v]$.

  We now note that the cocycle condition implies
  \begin{align*}
    \lbrace (x, 0, z) : (x, z) &\in \supp(\psi_+) \rbrace = \supp(\pi_{13}^\ast
    \psi_+) \\ &\subseteq \supp(\pi_{23}^\ast \psi_+) \cup \supp(\pi_{12}^\ast
    \psi_+) \subseteq (\lbrace 0 \rbrace \times \mathbb{R}^2) \cup (\mathbb{R}^2
    \times \lbrace 0 \rbrace).
  \end{align*}
  It follows $\supp(\psi_+)$ is contained in $(\lbrace 0 \rbrace \times
  \mathbb{R}) \cup (\mathbb{R} \times \lbrace 0 \rbrace)$. This allows us to
  write
  \[
    \psi_+ = c + \sum_{0 \neq d \in \mathbb{R}} (a_d t^a + b_d u^d), \quad c,
    a_d, b_d \in \Hom_A(M_0, IM_0).
  \]
  The cocycle condition then can be written out as
  \[
    c + \sum_{0 \neq d \in \mathbb{R}} (a_d t^d + b_d v^d) = 2c + \sum_{0 \neq d
    \in \mathbb{R}} (a_d t^d + (b_d + a_d) u^d + b_d v^d)
  \]
  and hence $c = 0$ and $b_d = -a_d$ for all $0 \neq d \in \mathbb{R}$. Setting
  $\xi_+ = \sum_{0 \neq d \in \mathbb{R}} a_d t^d$, we obtain
  \[
    \psi_+ = \pi_1^\ast \xi_+ - \pi_2^\ast \xi_+, \quad \xi_+ \in \Hom_A(M_0,
    IM_0)\dparen[t].
  \]
  Regarding $\xi_+$ as an $A\dparen[t]$-linear map $M_0\dparen[t] \to
  (IM_0)\dparen[t]$, we now define $\xi = \id + \xi_+$. It follows that
  \[
    \psi = \id + \pi_1^\ast \xi_+ - \pi_2^\ast \xi_+ = (\id - \pi_2^\ast \xi_+)
    \circ (\id + \pi_1^\ast \xi_+) = \pi_2^\ast \xi^{-1} \circ \pi_1^\ast \xi.
  \]
  This finishes the proof.
\end{proof}

\begin{proposition} \label{Prop:NovDescentR}
  Let $A$ be a commutative ring. Then the functor $\vect(A) \to \novdd(A)$ is an
  equivalence.
\end{proposition}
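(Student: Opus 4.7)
My plan is to apply Proposition~\ref{Prop:LatticeIsocrystal} to the Novikov isocrystal associated to the descent datum. By Proposition~\ref{Prop:NovDescentFullyFaithful}, only essential surjectivity remains to be shown, and exactly as in the proof of Proposition~\ref{Prop:NovDescentForReduced}, the full faithfulness of $\novdd(A) \to \novisoc(A)$ from Proposition~\ref{Prop:NovToIsocFullyFaithful} reduces this to showing that every isocrystal $(M, F_M) \in \novisoc(A)$ arising from some $(M, \varphi) \in \novdd(A)$ lies in the essential image of $\vect(A) \to \novisoc(A)$.

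To set up the lattice I would let $\mathfrak{n} \subseteq A$ be the nilradical and invoke Proposition~\ref{Prop:NovDescentForReduced} for $A/\mathfrak{n}$ to produce $\bar M_0 \in \vect(A/\mathfrak{n})$ together with an isomorphism of descent data identifying the reduction of $(M, \varphi)$ with $(\bar M_0 \otimes (A/\mathfrak{n})\dparen[t], \id)$. Since $\mathfrak{n}$ is nil, it lies in the Jacobson radical of $A$, so idempotents lift along $A \twoheadrightarrow A/\mathfrak{n}$ and $\bar M_0$ lifts to a finite projective $A$-module $M_0$. A Nakayama argument analogous to Lemma~\ref{Lem:VectSquareZeroDeform}(1) then lifts the trivialization to an $A\dparen[t]$-linear isomorphism $\xi \colon M \cong M_0 \otimes A\dparen[t]$. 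Transporting along $\xi$, I may assume $M = M_0 \otimes A\dparen[t]$ and write $F_M = (\id + g) \circ (\id \otimes F)$ for some $g \in \mathrm{End}_A(M_0) \otimes A\dparen[t]$ that reduces to zero modulo $\mathfrak{n}$. I would then take $M^\circ = M_0 \otimes_A A\dbrack[t]$, a finitely generated $A\dbrack[t]$-submodule of $M$ with $M^\circ[t^{-1}] = M$.

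Next, I would use $F$-semilinearity of $F_M$ together with the elementary observation that $F(a) - a$ lies in $t^\delta A\dbrack[t]$ for some $\delta > 0$ whenever $a \in A\dbrack[t]$ (the $t^0$-term cancels, and the support is bounded below so its positive part has a strictly positive minimum) to reduce verifying the hypothesis of Proposition~\ref{Prop:LatticeIsocrystal} to showing $g(e_i) \in t^\epsilon M^\circ$ for some $\epsilon > 0$ and each element $e_i$ in a finite generating set of $M_0$ over $A$. Equivalently, after possibly further adjusting $\xi$ by an $A\dparen[t]$-linear automorphism that reduces to the identity modulo $\mathfrak{n}$, I need the $t$-support of $g$ to be contained in $(0, \infty)$.

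The hard part is exactly this $t$-support statement. Extracting from the construction of $F_M$ in Subsection~\ref{Subsec:DDToIsoc} the identity $\varphi \cdot F_2(\varphi)^{-1} = \id + \pi_2^\ast g$, the ``constant-in-$u$'' part of the left-hand side controls the $t^0$-coefficient of $g$. In the square-zero case $\mathfrak{n}^2 = 0$, analyzing the cocycle condition on $\varphi = \id + \psi$ as in the proof of Lemma~\ref{Lem:NovDescentSquareZero} produces a decomposition $\psi = \pi_1^\ast \chi - \pi_2^\ast \chi$ for some $\chi$, whence $g = F(\chi) - \chi$ automatically has vanishing $t^0$-coefficient, and further replacing $\xi$ by $\xi \circ (\id + \chi)$ absorbs any negative $t$-support of $g$. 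To extend this to arbitrary $\mathfrak{n}$, I would exploit that every finitely generated sub-ideal $J \subseteq \mathfrak{n}$ satisfies $J^N = 0$ for some $N$: iterating Lemma~\ref{Lem:NovDescentSquareZero} along the filtration $0 \subseteq J^{N-1} \subseteq \dotsc \subseteq J$ gives effectivity of $(M, \varphi)$ modulo $J$, and hence the $t$-support property on $g$ modulo $J$. Passing through the directed system of all such $J$ (which exhausts $\mathfrak{n}$) and invoking $t$-adic completeness of $A\dparen[t]$, the property should descend to $g$ itself, at which point Proposition~\ref{Prop:LatticeIsocrystal} concludes the proof.
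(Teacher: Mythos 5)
Your proposal shares the paper's main ingredients (reduce modulo the nilradical $\mathfrak{n}$, use the lattice criterion of Proposition~\ref{Prop:LatticeIsocrystal}, and climb back along square-zero thickenings via Lemma~\ref{Lem:NovDescentSquareZero}), but the final assembly has a genuine gap.

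The problem is in your last paragraph. You want to show that, after adjusting $\xi$, the endomorphism $g$ has $t$-support in $(0,\infty)$, and you propose to establish this ``modulo $J$'' for each finitely generated $J\subseteq\mathfrak{n}$ and then pass to a limit. There are two issues. First, the claim that ``iterating Lemma~\ref{Lem:NovDescentSquareZero} along the filtration $0\subseteq J^{N-1}\subseteq\dotsb\subseteq J$ gives effectivity of $(M,\varphi)$ modulo $J$'' is the wrong direction: that iteration lifts effectivity from $A/J$ up to $A$, it does not \emph{produce} effectivity over $A/J$ from effectivity over $A/\mathfrak{n}$ --- the kernel $\mathfrak{n}/J$ of $A/J\twoheadrightarrow A/\mathfrak{n}$ is still only nil, not nilpotent, so no finite chain of square-zero thickenings bridges them. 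Second, even granting the per-$J$ statement, ``passing through the directed system and invoking $t$-adic completeness'' is not an argument: the adjusting automorphism depends on $J$ and there is no mechanism forcing these adjustments to be compatible or to converge; indeed, if such a limit argument worked it would render the whole careful reduction to finitely generated nilpotent ideals unnecessary.

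What you are missing is the one observation that makes the paper's proof work, and it is a feature of the Novikov ring itself: for any element of $M\dparen[t]$, the part of the support in $(-\infty,0]$ is \emph{finite}. Applying this to $F_M(m_i)-m_i \in \mathfrak{n}\dparen[t]\,M^\circ[t^{-1}]$ for the finitely many generators $m_i$, the non-positive-degree terms involve only finitely many coefficients in $\mathfrak{n}$. Let $J$ be the ideal they generate; it is finitely generated, contained in $\mathfrak{n}$, hence nilpotent. Modulo this \emph{single} $J$, the hypothesis of Proposition~\ref{Prop:LatticeIsocrystal} holds on the nose, giving effectivity over $A/J$, and then the square-zero iteration along $A\to A/J^{N-1}\to\dotsb\to A/J$ lifts it to $A$. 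No directed limit, no global adjustment of $\xi$, and no need to control the full $t$-support of $g$ over $A$. If you replace your last paragraph with this choice of $J$, your argument becomes essentially the paper's (the preliminary lifting of $\bar M_0$ to a finite projective $M_0$ over $A$ is a harmless stylistic variant --- the paper just lifts generators).
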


\begin{proof}
  By Proposition~\ref{Prop:NovDescentFullyFaithful}, it suffices to show that
  the functor is essentially surjective. Consider a Novikov descent datum $(M,
  \varphi) \in \novdd(A)$, and let $(M, F_M)$ be the Novikov isocrystal attached
  to it. Let $I \subseteq A$ be the nilradical so that $B = A/I$ is reduced. By
  Proposition~\ref{Prop:NovDescentForReduced}, there exists a finite projective
  $B$-module $N_0$ for which
  \[
    (M \otimes_{A\dparen[t]} B\dparen[t], F_M \otimes F) \cong (N_0
    \otimes_B B\dparen[t], \id_{N_0} \otimes F)
  \]
  as Novikov isocrystals over $B$.

  Choose a finite set of generators $n_1, \dotsc, n_k \in N_0$ as $B$-modules.
  Because $A\dparen[t] \to B\dparen[t]$ is surjective, there is also a
  surjection
  \[
    M \to M \otimes_{A\dparen[t]} B\dparen[t] \cong N_0\dparen[t].
  \]
  Lift the elements $n_1, \dotsc, n_k \in N_0 \subseteq N_0\dparen[t]$ to
  elements $m_1, \dotsc, m_k \in M$. Since elements of $1 + I\dparen[t]$ are
  invertible in $A\dparen[t]$, the elements $m_1, \dotsc, m_k$ generate $M$ as a
  $A\dparen[t]$-module by Nakayama's lemma. This implies that if we set
  \[
    M^\circ = A\dbrack[t] m_1 + \dotsb + A\dbrack[t] m_k
  \]
  then $M^\circ[t^{-1}] = M$.

  For each $1 \le i \le k$, we have
  \[
    F_M(m_i) - m_i \in \ker(M \to M \otimes_{A\dparen[t]} B\dparen[t]) = M
    \otimes_{A\dparen[t]} I\dparen[t].
  \]
  That is, we have
  \[
    F_M(m_i) \in m_i + I\dparen[t] m_1 + I\dparen[t] m_2 + \dotsb + I\dparen[t]
    m_k.
  \]
  Separating out the monomials with non-positive degree, we can further write
  \[
    F_M(m_i) - m_i - \sum_{j \in I_i} a_{i,j} t^{d_{i,j}} m_{l_{i,j}} \in
    \sum_{i=1}^k (t^{>0}) I\dbrack[t] m_i \subseteq (t^{>0}) M^\circ,
  \]
  where $I_i$ is finite and $a_{i,j} \in I$. That is, if we consider the
  finitely generated ideal $J \subseteq I$ generated by $a_{i,j}$ over all $1
  \le i \le k$ and $j \in I_i$, we obtain
  \[
    F_M(m_i) - m_i \in (t^{>0}) M^\circ
  \]
  modulo $J\dparen[t]$. Defining $C = A / J$, so that $A \twoheadrightarrow C
  \twoheadrightarrow B$, we conclude that the Novikov isocrystal
  \[
    (M \otimes_{A\dparen[t]} C\dparen[t], F_M \otimes F)
  \]
  has the property that the $C\dbrack[t]$-submodule of $M \otimes_{A\dparen[t]}
  C\dparen[t]$ generated by the images of $m_1, \dotsc, m_k$ satisfies the
  hypotheses of Proposition~\ref{Prop:LatticeIsocrystal}. This shows that the
  base change of $(M, F_M)$ to $C$ lies in the essential image of $\vect(C) \to
  \novisoc(C)$, and by Proposition~\ref{Prop:NovToIsocFullyFaithful}, it also
  follows that the base change of $(M, \varphi)$ to $C$ lies in the essential
  image of $\vect(C) \to \novdd(C)$.

  On the other hand, because all elements of $J$ are nilpotent and $J$ is
  finitely generated, some finite power of $J$ is the zero ideal. This allows us
  to iteratively apply Lemma~\ref{Lem:NovDescentSquareZero} to conclude that
  $(M, \varphi)$ lies in the essential image over $\vect(A) \to \novdd(A)$.
\end{proof}

\subsection{Restricting the exponents} \label{Subsec:Exponent}
\def\novexp{\Gamma}

We finally remove the hypothesis that $\Gamma = \mathbb{R}$. Note there is a
natural functor $\novdd(A) \to \novr{\novdd(A)}$ given by tensoring along the
ring homomorphism $A\dparen[t] \to \novr{A\dparen[t]}$.

\begin{theorem} \label{Thm:NovDescent}
  Let $A$ be a commutative ring, and let $\Gamma \subseteq \mathbb{R}$ be an
  additive submonoid. Then the functor $\vect(A) \to \novdd(A)$ is an
  equivalence of categories.
\end{theorem}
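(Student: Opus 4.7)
The plan is to reduce the general case to Proposition~\ref{Prop:NovDescentR}, which treats $\Gamma = \mathbb{R}$. Since Proposition~\ref{Prop:NovDescentFullyFaithful} already establishes full faithfulness for arbitrary $\Gamma$, only essential surjectivity remains. Given $(M, \varphi) \in \novdd(A)$ with $\Gamma$-exponents, I would base change along the injection $A\dparen[t] \hookrightarrow \novr{A\dparen[t]}$ to obtain $(M_\mathbb{R}, \varphi_\mathbb{R}) \in \novr{\novdd(A)}$. By Proposition~\ref{Prop:NovDescentR} applied to $\Gamma = \mathbb{R}$, there exist a finite projective $A$-module $N$ and a trivialization
\[
  g \colon (M_\mathbb{R}, \varphi_\mathbb{R}) \xrightarrow{\cong} (N \otimes_A \novr{A\dparen[t]}, \id).
\]
The remaining task is to show that $g$ already descends to an isomorphism of Novikov descent data with $\Gamma$-exponents.

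The descent is accomplished by a support argument similar in spirit to the one in Proposition~\ref{Prop:NovDescentForProdGeomPts}. Let $\tilde g \colon M \to N \otimes_A \novr{A\dparen[t]}$ denote the $A\dparen[t]$-linear map adjoint to $g$. Fixing a finite generating set $m_1, \dotsc, m_k$ of $M$ and, for each $m \in M$, writing $\varphi(\pi_1^\ast m) = \sum_i \pi_2^\ast(m_i) \cdot a_i$ with $a_i \in A\dparen[t][u]$, the trivialization identity $\pi_2^\ast g \circ \varphi_\mathbb{R} = \pi_1^\ast g$ becomes the equation
\[
  \sum_i \pi_2^\ast(\tilde g(m_i)) \cdot a_i = \pi_1^\ast(\tilde g(m))
\]
inside $N \otimes_A \novr{A\dparen[t][u]}$. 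The left hand side has support contained in $\Gamma \times \mathbb{R}$, since each $a_i$ has support in $\Gamma \times \Gamma$ and each $\pi_2^\ast(\tilde g(m_i))$ has support in $\{0\} \times \mathbb{R}$, while the right hand side has support in $\mathbb{R} \times \{0\}$. Their intersection is $\Gamma \times \{0\}$, which forces $\supp(\tilde g(m)) \subseteq \Gamma$, so $\tilde g$ factors as an $A\dparen[t]$-linear map $M \to N \otimes_A A\dparen[t]$.

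A symmetric analysis applied to $g^{-1}$, using the relation $(g^{-1})_2 = \varphi_\mathbb{R} \circ (g^{-1})_1$ together with the fact that $\varphi^{-1}$ also has support in $\Gamma \times \Gamma$, produces an $A\dparen[t]$-linear map $N \otimes_A A\dparen[t] \to M$ in the opposite direction. The two descended maps remain mutual inverses because $A\dparen[t] \hookrightarrow \novr{A\dparen[t]}$ is injective and $M$ is finite projective, so the natural map $\Hom_{A\dparen[t]}(M, M) \to \Hom_{\novr{A\dparen[t]}}(M_\mathbb{R}, M_\mathbb{R})$ is injective. Compatibility with the descent data is inherited from the trivialization at the level of $\mathbb{R}$-exponents. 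The main technical subtlety is keeping the support analysis rigorous when $M$ is finite projective but not free; I expect to handle this by working with $\tilde g(m)$ as an element of the concrete series module $N \otimes_A \novr{A\dparen[t]}$, where supports are unambiguously defined by the remark that $N \otimes_A \novr{A\dparen[t]} \cong N\ldparen t^\mathbb{R} \rdparen$ for $N$ finitely presented, rather than attempting to work abstractly inside $\Hom$-modules.
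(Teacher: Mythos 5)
Your proposal takes the same overall route as the paper: reduce to the $\Gamma = \mathbb{R}$ case via Proposition~\ref{Prop:NovDescentR}, then run a support argument to descend the trivialization. The forward half is essentially identical to the paper's. Writing $\varphi(\pi_1^\ast m) = \sum_i \pi_2^\ast(m_i) a_i$ and comparing supports on the two sides of $\sum_i \pi_2^\ast(\tilde g(m_i)) a_i = \pi_1^\ast(\tilde g(m))$ is exactly the argument the paper uses (there phrased as: the common image of $\pi_1^\ast M$ and $\pi_2^\ast M$ inside $\novr{M_0\dparen[t][u]}$ has support in $\mathbb{R} \times \Gamma$, hence $\supp(m) \subseteq \Gamma$ by looking at $\pi_2^\ast m$).

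The place where your write-up is thinner than the paper is the reverse step. You propose running a ``symmetric analysis applied to $g^{-1}$,'' but $g^{-1}$ takes values in $M_\mathbb{R}$, which is an abstract finite projective $\novr{A\dparen[t]}$-module with no intrinsic notion of support; the only embedding of $M_\mathbb{R}$ into a concrete series module available at this stage is via $g$ itself, under which $g^{-1}$ becomes the identity and the support argument collapses into a tautology (``$N\dparen[t] \subseteq \tilde g(M)$'' is precisely what you are trying to prove, not an input). If you choose bases so that $M$ is free, the argument becomes a genuine matrix computation: write $g$ as a matrix $C$ over $\novr{A\dparen[t]}$, and use $\Phi^{-1} \cdot \pi_2^\ast C^{-1} = \pi_1^\ast C^{-1}$ (where $\Phi$ is the matrix of $\varphi$) together with $\supp(\Phi^{-1}) \subseteq \Gamma \times \Gamma$ to get $\supp(C^{-1}) \subseteq \Gamma$. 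That works, but extending it to non-free finite projective $M$ by patching is exactly the ``technical subtlety'' you flag without resolving. The paper sidesteps this entirely by dualizing: one runs the (already-proved) forward support argument on the dual descent datum $M^\vee$ with the trivialization $(f^\vee)^{-1}$, obtaining $M^\vee \hookrightarrow M_0^\vee\dparen[t]$, then dualizes back to get $M_0\dparen[t] \to M$ and checks the composition $M_0\dparen[t] \to M \hookrightarrow M_0\dparen[t]$ is the identity after base change to $\novr{A\dparen[t]}$, using injectivity of $A\dparen[t] \hookrightarrow \novr{A\dparen[t]}$. This is an intrinsic version of your matrix argument that handles the non-free case without further work, and I would recommend adopting it as the way to complete the reverse step. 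Your closing observation about injectivity of $\Hom_{A\dparen[t]}(M,M) \to \Hom_{\novr{A\dparen[t]}}(M_\mathbb{R},M_\mathbb{R})$ is correct and corresponds to the paper's final base-change check.
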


\begin{proof}
  By Proposition~\ref{Prop:NovDescentFullyFaithful}, it is enough to prove
  essential surjectivity. Let $(M, \varphi) \in \novdd(A)$ be a Novikov descent
  datum. By Proposition~\ref{Prop:NovDescentR}, there exists a finite projective
  $A$-module $M_0$ together with an isomorphism
  \[
    f \colon (M \otimes_{A\dparen[t]} \novr{A\dparen[t]}, \varphi
    \otimes_{A\dparen[t][u]} \novr{A\dparen[t][u]}) \xrightarrow{\cong}
    (\novr{M_0\dparen[t]}, \id_{M_0} \otimes \novr{A\dparen[t][u]})
  \]
  in $\novr{\novdd(A)}$. This in particular allows us to regard $M$ as an
  $A\dparen[t]$-submodule of $\novr{M_0\dparen[t]}$ with the property that the
  images of
  \[
    M \otimes_{A\dparen[t], \pi_1^\ast} A\dparen[t][u] \hookrightarrow
    \novr{M_0\dparen[t][u]}, \quad M \otimes_{A\dparen[t], \pi_2^\ast}
    A\dparen[t][u] \hookrightarrow \novr{M_0\dparen[t][u]}
  \]
  agree.

  Consider any element $x$ in the common image. Since it can be written as a
  finite sum
  \[
    x = \sum_{i=1}^n (\pi_1^\ast m_i) a_i, \quad m_i \in M, \quad a_i \in
    A\dparen[t][u],
  \]
  we see that $\supp(x)$ must be contained in $\mathbb{R} \times \Gamma$. For
  every element $m \in M$, we in particular have $\supp(\pi_2^\ast m) \subseteq
  \mathbb{R} \times \Gamma$, and hence $\supp(m) \subseteq \Gamma$. This shows
  that
  \[
    M \subseteq M_0\dparen[t],
  \]
  and it now suffices to prove that this inclusion is an equality.

  We achieve this by running the same argument for the dual descent datum. Write
  $g \colon M \hookrightarrow M_0\dparen[t]$ for the inclusion above. If we
  consider the linear dual of $f$, we obtain an isomorphism
  \[
    (f^\vee)^{-1} \colon M^\vee \otimes_{A\dparen[t]} \novr{A\dparen[t]}
    \xrightarrow{\cong} \novr{M_0^\vee\dparen[t]}
  \]
  compatible with descent data. By the argument above, there exists an inclusion
  $h \colon M^\vee \hookrightarrow M_0^\vee\dparen[t]$ that recovers
  $(f^\vee)^{-1}$ upon base changing along $A\dparen[t] \to \novr{A\dparen[t]}$.
  This shows that the composition
  \[
    M_0\dparen[t] \xrightarrow{h^\vee} M \xrightarrow{g} M_0\dparen[t]
  \]
  is the identity map upon base changing along $A\dparen[t] \to
  \novr{A\dparen[t]}$. Because both sides are finite projective and $A\dparen[t]
  \hookrightarrow \novr{A\dparen[t]}$ is injective, the composition $g \circ
  h^\vee$ is the identity map. In particular, $g$ is surjective, and therefore a
  bijection.
\end{proof}

\section{Application to perfectoid geometry} \label{Sec:Perfectoid}
\def\novexp{{1/p^\infty}}

We now record some consequences of our descent result. We will use
Theorem~\ref{Thm:Main} with the exponent submonoid $\Gamma =
\mathbb{Z}[p^{-1}]$. It is customary to write
\[
  {\def\novexp{\Gamma} A\dparen[t]} = A\dparen[t]
\]
in the setting of perfectoid geometry.

Let $\mathsf{Perf}$ be the category of affinoid perfectoid spaces in
characteristic $p$, equipped with the v-topology. For a perfect
$\mathbb{F}_p$-algebra $A$, we consider the v-sheaf
\[
  \Spd A \colon \mathsf{Perf}^\mathrm{op} \to \mathsf{Set}; \quad \Spa(R, R^+)
  \mapsto \Hom(A, R^+).
\]
Similarly for $B$ a perfect (hence perfectoid) complete Tate
$\mathbb{F}_p$-algebra, we define the v-sheaf
\[
  \Spd B = \Spd(B, B^\circ) \colon \mathsf{Perf}^\mathrm{op} \to \mathsf{Set};
  \quad \Spa(R, R^+) \mapsto \Hom_\mathrm{cts}((B, B^\circ), (R, R^+)).
\]
We refer the reader to \cite{SW20} for the detailed
definitions of the objects appearing in this section.

\begin{definition}
  For a v-sheaf $X$, we define a \textdef{vector bundle} on $X$ to be an
  association to each $(R, R^+)$-point $\xi \in X(R, R^+)$ a locally finite free
  sheaf $\mathscr{V}_\xi$ on $\Spa(R, R^+)$, together with the data of, for each
  map $f \colon \Spa(S, S^+) \to \Spa(R, R^+)$ and $\xi \in X(R, R^+)$, an
  isomorphism $f^\ast \mathscr{V}_\xi \cong \mathscr{V}_{f^\ast \xi}$,
  satisfying the condition that for every
  \[
    \Spa(T, T^+) \xrightarrow{g} \Spa(S, S^+) \xrightarrow{f} \Spa(R, R^+)
  \]
  and $\xi \in X(R, R^+)$, the three identifications between $g^\ast f^\ast
  \mathscr{V}_\xi$, $g^\ast \mathscr{V}_{f^\ast \xi}$, and $\mathscr{V}_{g^\ast
  f^\ast \xi}$ commute.
\end{definition}

Let $A$ be a discrete perfect $\mathbb{F}_p$-algebra. There is a v-cover
\[
  X = \Spd(A\dparen[t], A\dbrack[t]) \to \Spd(A, A) = Y
\]
of $Y$ be an affinoid perfectoid space $X$. We note that $X$ represents the
functor sending a perfectoid Huber pair $(A, A) \to (R, R^+)$ to a topologically
nilpotent unit $\varpi \in R^{\circ\circ}$, because uniformity and perfectness
of $R$ implies that there exists a unique continuous map $A\dparen[t] \to R$
sending $t \mapsto \varpi$. This means that both $X \times_Y X$ and $X \times_Y
X \times_Y X$ are representable by perfectoid spaces
\begin{align*}
  X \times_Y X &= \Spa A\dbrack[t][u] \setminus \lbrace \lvert tu \rvert = 0
  \rbrace \\ &= \bigcup_{n=2}^\infty \Spd A\dbrack[t][u]\Bigl\langle
  \frac{t^n}{u}, \frac{u^n}{t} \Bigr\rangle[1/tu], \\ X \times_Y X \times_Y X &=
  \Spa A\dbrack[t][u][v] \setminus \lbrace \lvert tuv \rvert = 0 \rbrace \\ &=
  \bigcup_{n=3}^\infty \Spd A\dbrack[t][u][v] \Bigl\langle \frac{t^n}{uv},
  \frac{u^n}{tv}, \frac{v^n}{tu} \Bigr\rangle[1/tuv].
\end{align*}

\begin{lemma} \label{Lem:GlobalSection}
  We have
  \begin{align*}
    H^0(X \times_Y X, \mathscr{O}_{X \times_Y X}) &= A\dparen[t][u], \\ H^0(X
    \times_Y X \times_Y X, \mathscr{O}_{X \times_Y X \times_Y X}) &=
    A\dparen[t][u][v].
  \end{align*}
\end{lemma}

\begin{proof}
  For the first equality, we note that we can write
  \[
    A\dbrack[t][u]\Bigl\langle \frac{t^n}{u}, \frac{u^n}{t} \Bigr\rangle
    \Bigl[\frac{1}{tu}\Bigr] = \left\lbrace \sum_{i \in I} a_i t^{d_i} u^{e_i} :
    \begin{matrix} a_i \in A, d_i, e_i \in \mathbb{Z}[p^{-1}], \\ \lbrace i \in
    I : \alpha d_i + e_i \lt C \rbrace \text{ is finite} \\ \text{for all } C
    \in \mathbb{R} \text{ and } n^{-1} \le \alpha \le n \end{matrix}
    \right\rbrace.
  \]
  As we increase the $n$ and take the intersection of the rings, we see that we
  allow all possible values of $\alpha > 0$, and hence the ring of global
  sections of $X \times_Y X$ is $A\dparen[t][u]$. The proof of the second
  equality is similar.
\end{proof}

\begin{corollary} \label{Cor:VectorBundleOnSpd}
  Let $A$ be a discrete perfect $\mathbb{F}_p$-algebra. Then the natural functor
  \[
    \lbrace \text{vector bundles on } \Spec A \rbrace \to \lbrace \text{vector
    bundles on } \Spd A \rbrace
  \]
  is an exact tensor equivalence.
\end{corollary}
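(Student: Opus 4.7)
The plan is to realize vector bundles on $\Spd A$ as descent data for an explicit v-cover by an affinoid perfectoid space, and then invoke Theorem~\ref{Thm:NovDescent}.

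With $\Gamma = \mathbb{Z}[p^{-1}]$, the ring $A\dparen[t]$ is perfect (since $A$ is) and a complete Tate ring with pseudo-uniformizer $t$, so $(A\dparen[t], A\dbrack[t])$ is a perfectoid Huber pair in characteristic $p$. Set
\[
  X_1 = \Spa(A\dparen[t], A\dbrack[t]), \quad X_2 = \Spa(A\dparen[t][u], A\dbrack[t][u]), \quad X_3 = \Spa(A\dparen[t][u][v], A\dbrack[t][u][v]).
\]
The maps $\pi_\bullet^\ast$ of Section~\ref{Sec:Novikov}, together with the structure map $A \to A\dbrack[t]$, induce an augmented semi-cosimplicial diagram $\Spd A \leftarrow X_1 \rightrightarrows X_2 \rrrarrows X_3$ of v-sheaves. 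The first step is to show that $X_1 \to \Spd A$ is a v-cover with this diagram as its \v{C}ech nerve: a point $\xi : A \to R^+$ of $\Spd A$ lifts to $X_1$ by sending $t$ to any pseudo-uniformizer of $R$, which is available after a v-cover, and $X_1 \times_{\Spd A} X_1$ classifies tuples $(\xi, t, u)$ with $\xi : A \to R^+$ and $t, u \in R^{\circ\circ}$, which is exactly the universal property of $X_2$ given the support conditions defining $A\dparen[t][u]$. The triple product $X_3$ is handled identically.

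The second step combines v-descent for vector bundles on affinoid perfectoid spaces in characteristic $p$ \cite{SW20} with the Kedlaya--Liu equivalence between vector bundles on $\Spa$ of a perfectoid Tate ring and finite projective modules over that ring. Together these identify vector bundles on $\Spd A$ with the category $\novdd(A)$ of Novikov descent data over $A$ with exponent monoid $\mathbb{Z}[p^{-1}]$, in such a way that the natural functor from $\vect(\Spec A)$ corresponds to the functor $\vect(A) \to \novdd(A)$ of Section~\ref{Sec:Novikov}. Applying Theorem~\ref{Thm:NovDescent} then produces the desired equivalence. Exactness and the tensor structure are automatic, since both are preserved under the Kedlaya--Liu equivalence and under the base change $A \to A\dparen[t]$.

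The main obstacle I expect is the identification $X_1 \times_{\Spd A} X_1 \cong X_2$. As noted in the introduction, $A\dparen[t][u]$ is strictly larger than the algebraic tensor product $A\dparen[t] \otimes_A A\dparen[t]$, so one must show that the v-sheaf fiber product is represented by precisely this enlargement. The key input is that the support conditions in the definition of $A\dparen[t][u]$ characterize continuous ring maps $A\dparen[t][u] \to R$ to a complete Tate ring $R$ as exactly those sending both $t$ and $u$ to topologically nilpotent elements, which matches the moduli description of the fiber product.
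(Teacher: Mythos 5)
Your overall strategy (realize vector bundles on $\Spd A$ as descent data for the v-cover $X = \Spd(A\dparen[t], A\dbrack[t]) \to \Spd A$, then apply Theorem~\ref{Thm:NovDescent}) is the same as the paper's, but the step you flagged as the main obstacle is where your proposal goes wrong, and the gap is genuine. You claim that $X \times_{\Spd A} X$ is represented by $\Spa(A\dparen[t][u], A\dbrack[t][u])$, treating this as an affinoid perfectoid space. However, with $\Gamma = \mathbb{Z}[p^{-1}]$, the ring $A\dparen[t][u]$ is not a Tate ring — indeed not a Huber ring in any natural way. The support condition in its definition admits elements such as $\sum_{n \ge 1} t^{-n} u^{n^2}$, whose $t$-exponents are unbounded below; no finitely generated ideal of definition of the form $(tu)$ or $(t,u)$ can produce a topology in which such series converge to elements of the ring. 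Correspondingly, the fiber product $X \times_{\Spd A} X$ is not quasicompact: it parametrizes a point of $\Spd A$ together with two pseudo-uniformizers with no relation imposed between their valuations, and no single $\Spa$ of a Huber pair (which is always spectral, hence quasicompact) can represent it.

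The paper instead identifies
\[
  X \times_{\Spd A} X = \bigcup_{n \ge 2} \Spd A\dbrack[t][u]\Bigl\langle \frac{t^n}{u}, \frac{u^n}{t}\Bigr\rangle[1/tu],
\]
an increasing union of affinoid perfectoid spaces with the $n$-th piece cutting out the relation $|t|^n \le |u| \le |t|^{1/n}$. The Novikov ring $A\dparen[t][u]$ then appears only as the ring of global sections (the inverse limit of the coordinate rings of the pieces). The passage from a descent isomorphism $\varphi \colon \pi_1^\ast\mathscr{V} \to \pi_2^\ast\mathscr{V}$ of vector bundles on this non-affinoid space to an $A\dparen[t][u]$-linear isomorphism of modules then requires the observation that $\pi_1^\ast\mathscr{V}$ and $\pi_2^\ast\mathscr{V}$ are both pullbacks of the finite projective $A\dparen[t]$-module $M$, so that their internal $\mathcal{H}om$ on each piece is the pullback of $\Hom_{A\dparen[t]}(M, M)$, and taking global sections lands in $A\dparen[t][u]$. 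Your sentence asserting the fiber product is represented by $\Spa(A\dparen[t][u], A\dbrack[t][u])$, with continuous maps out of it classified by pairs of pseudo-uniformizers, is therefore false as stated, and the real content of the step is exactly what it hides. The same correction is needed for the triple product $X \times_{\Spd A} X \times_{\Spd A} X$.
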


\begin{proof}
  We use the v-cover
  \[
    X = \Spd(A\dparen[t], A\dbrack[t]) \to \Spd A = Y
  \]
  from above. By \cite[Lemma~17.1.8]{SW20} the category of vector bundles on
  $\Spd A$ can be identified with the category of pairs $(\mathscr{V},
  \varphi)$, where $\mathscr{V}$ is a vector bundle on $X$ and $\varphi$ is an
  isomorphism of vector bundles over $X \times_Y X$ satisfying a cocycle
  condition over $X \times_Y X \times_Y X$.
  By \cite[Theorem~2.7.7]{KL15} and again
  \cite[Lemma~17.1.8]{SW20}, the vector bundle $\mathscr{V}$
  corresponds to a finite projective module $M$ over $A\dparen[t]$.

  By Lemma~\ref{Lem:GlobalSection}, the natural functor from the category of
  finite projective $A\dparen[t][u]$-modules to the category of vector bundles
  on $X \times_Y X$ is fully faithful. If we denote by $\pi_1, \pi_2 \colon X
  \times_Y X \to X$ the two projection maps, then the descent datum $\varphi
  \colon \pi_1^\ast \mathscr{V} \cong \pi_2^\ast \mathscr{V}$ corresponds to an
  isomorphism of $A\dparen[t][u]$-modules
  \[
    \varphi \colon M \otimes_{A\dparen[t], \pi_1^\ast} A\dparen[t][u]
    \xrightarrow{\cong} M \otimes_{A\dparen[t], \pi_2^\ast} A\dparen[t][u].
  \]
  Finally the cocycle condition is defined on $X \times_Y X \times_Y X$, and
  hence by Lemma~\ref{Lem:GlobalSection} can be checked after tensoring to
  \[
    H^0(X \times_Y X \times_Y X, \mathscr{O}_{X \times_Y X \times_Y X}) =
    A\dparen[t][u][v].
  \]
  This shows that a vector bundle on $\Spd A$ corresponds to a Novikov descent
  datum for $A$ with $\Gamma = \mathbb{Z}[p^{-1}]$. Applying
  Theorem~\ref{Thm:Main} now gives the desired result.
\end{proof}

We now compare the finite \'{e}tale sites of $\Spec A$ and $\Spd A$.

\begin{definition}[{\cite[Definition~10.1]{Sch22}}]
  We say that a map $f \colon X \to Y$ of v-sheaves is \textdef{finite
  \'{e}tale} when for every map $\Spa(R, R^+) \to Y$, the fiber product $\Spa(R,
  R^+) \times_Y X$ is representable by an affinoid perfectoid $\Spa(S, S^+)$
  where $R \to S$ is finite \'{e}tale and $S^+$ is the integral closure of the
  image of $R^+$.
\end{definition}

\begin{corollary} \label{Cor:FiniteEtaleOverSpd}
  Let $A$ be a perfect $\mathbb{F}_p$-algebra. Then the natural functor
  \[
    \lbrace \text{schemes finite \'{e}tale over } \Spec A \rbrace \to \lbrace
    \text{v-sheaves finite \'{e}tale over } \Spd A \rbrace
  \]
  is an equivalence.
\end{corollary}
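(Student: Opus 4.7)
The plan is to promote the descent result for finite projective modules from Theorem~\ref{Thm:NovDescent} to a descent result for finite \'{e}tale algebras, following the same v-cover setup used in the proof of Corollary~\ref{Cor:VectorBundleOnSpd}. The key enabling fact is that since $A$ is perfect $\mathbb{F}_p$ and $\Gamma = \mathbb{Z}[p^{-1}]$, the ring $A\dparen[t]$ is perfect and Tate, hence perfectoid, so $\Spa(A\dparen[t], A\dbrack[t])$ is an affinoid perfectoid space over which finite \'{e}tale v-sheaves correspond to classical finite \'{e}tale $A\dparen[t]$-algebras (e.g.\ by \cite[Lemma~15.6]{Sch22}).

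First I would identify the category of v-sheaves finite \'{e}tale over $\Spd A$ with the category of pairs $(B, \varphi)$, where $B$ is a finite \'{e}tale $A\dparen[t]$-algebra and $\varphi$ is an isomorphism of $A\dparen[t][u]$-algebras from $B \otimes_{A\dparen[t], \pi_1^\ast} A\dparen[t][u]$ to $B \otimes_{A\dparen[t], \pi_2^\ast} A\dparen[t][u]$ satisfying the cocycle condition over $A\dparen[t][u][v]$. This uses exactly the v-cover $X = \Spd(A\dparen[t], A\dbrack[t]) \to \Spd A = Y$ and the computations $H^0(X \times_Y X, \mathscr{O}) = A\dparen[t][u]$ and $H^0(X \times_Y X \times_Y X, \mathscr{O}) = A\dparen[t][u][v]$ already carried out in the proof of Corollary~\ref{Cor:VectorBundleOnSpd}.

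Forgetting from algebras to modules, such a pair $(B, \varphi)$ becomes a Novikov descent datum, so by Theorem~\ref{Thm:NovDescent} the underlying module descends canonically to a finite projective $A$-module $B_0$ together with an $A\dparen[t]$-linear isomorphism $B \cong B_0 \otimes_A A\dparen[t]$ compatible with $\varphi$. Because $\varphi$ is an algebra isomorphism, the multiplication map $B \otimes_{A\dparen[t]} B \to B$ and the unit $A\dparen[t] \to B$ are intertwined with it on both sides, so by Proposition~\ref{Prop:NovDescentFullyFaithful} they descend uniquely to an $A$-algebra structure on $B_0$ with $B = B_0 \otimes_A A\dparen[t]$ as algebras. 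The map $A \to A\dparen[t]$ is faithfully flat (the functor $M \mapsto M\dparen[t]$ is exact, and $M\dparen[t] = 0$ forces $M = 0$ since $M \hookrightarrow M\dparen[t]$), so finite \'{e}taleness of $B$ over $A\dparen[t]$ descends to $B_0$ being finite \'{e}tale over $A$, corresponding to the scheme $\Spec B_0 \to \Spec A$. Fully faithfulness of the overall functor is the special case of Proposition~\ref{Prop:NovDescentFullyFaithful} for morphisms that respect the algebra structures.

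The main technical obstacle is the initial identification in the second paragraph, which relies on the nontrivial perfectoid result that finite \'{e}tale v-sheaves over an affinoid perfectoid space reduce to classical finite \'{e}tale algebras over the structure ring; once this is in hand, the remainder is a formal consequence of Theorem~\ref{Thm:NovDescent}, faithfully flat descent of \'{e}taleness, and the v-descent computations already recorded in the proof of Corollary~\ref{Cor:VectorBundleOnSpd}.
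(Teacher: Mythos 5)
The overall strategy you outline is the same as the paper's: use the v-cover $\Spd(A\dparen[t], A\dbrack[t]) \to \Spd A$ and the ring computations already established in Corollary~\ref{Cor:VectorBundleOnSpd} to identify finite \'etale v-sheaves over $\Spd A$ with finite \'etale $A\dparen[t]$-algebras equipped with a Novikov descent datum, descend the underlying module via Theorem~\ref{Thm:NovDescent}, transport the algebra structure by full faithfulness, and then check that \'etaleness agrees on both sides. (The paper phrases this more compactly as an equivalence of ``vector bundles with algebra structure'' via Corollary~\ref{Cor:VectorBundleOnSpd}, but the content is the same.)

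However, there is a genuine gap in your final step: the claim that $A \to A\dparen[t]$ is \emph{faithfully flat}. Your justification — ``the functor $M \mapsto M\dparen[t]$ is exact'' — does not establish flatness, because flatness is about the functor $M \mapsto M \otimes_A A\dparen[t]$, and this agrees with $M \mapsto M\dparen[t]$ only for finitely presented $M$ (as the paper explicitly notes). In fact, $A\dparen[t]$ is not flat over $A$ in general: already for the ordinary power series ring, $A[[t]]$ is flat over $A$ essentially only when $A$ is Noetherian (or coherent), and the paper deliberately works with arbitrary perfect $\mathbb{F}_p$-algebras, which are typically non-Noetherian. The paper avoids this entirely: it only uses that $A \hookrightarrow A\dparen[t]$ splits as a map of $A$-modules, hence is universally injective, and then invokes descent of finite \'etaleness along universally injective ring maps (\cite[Theorem~08XE]{Sta24}). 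Replacing your faithfully-flat-descent step with this universal-injectivity argument would close the gap; the rest of your proposal is sound and matches the paper's route.
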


\begin{proof}
  We note that a finite \'{e}tale $R$-algebra is necessarily a finite projective
  $R$-module. By Corollary~\ref{Cor:VectorBundleOnSpd}, we have an equivalence
  between the category of vector bundles on $\Spec A$ with an algebra structure
  and the category of vector bundles on $\Spd A$ with an algebra structure. It
  remains to check that the condition of being \'{e}tale agrees on both sides.

  If the finite $A$-algebra $B$ is \'{e}tale, it is clear that the corresponding
  $R$-algebra $R \otimes_A B$ is also \'{e}tale over $R$ for every $A \to R^+$.
  In the other direction we note that if $A\dparen[t] \to B \otimes_A
  A\dparen[t]$ is finite \'{e}tale, then so is $A \to B$ by
  \cite[Theorem~08XE]{Sta24}, as the inclusion $A \to A\dparen[t]$
  splits as a map of $A$-modules.
\end{proof}

\bibliographystyle{amsalpha}
\bibliography{references}

\end{document}